\theoremstyle{plain}
\newtheorem{theorem}{Theorem}[section]
\newtheorem{corollary}[theorem]{Corollary}
\newtheorem{lemma}[theorem]{Lemma}
\theoremstyle{definition}
\newtheorem{assumption}[theorem]{Assumption}
\newtheorem*{example}{Example}
\theoremstyle{remark}
\newtheorem*{remark}{Remark}
\newtheoremstyle{cond}
  {3pt}			
  {3pt}			
  {}			
  {}			
  {\bfseries}		
  {.}			
  {.5em}		
  {Condition \thmnote{#3}}	
\theoremstyle{cond}
\def\theenumi{(\roman{enumi})}
\newcommand{\R}{\mathbb{R}}	
\newcommand{\C}{\mathbb{C} }    
\newcommand{\lip}{\textrm{Lip(X)}}
\newcommand{\lipX}{\textrm{Lip(X;$\:\R$)}}
\newcommand{\lipXC}{\textrm{Lip(X;$\:\C$)}}
\newcommand{\CC}{\mathcal{C}}
\newcommand{\esp}{\mathbb{E}}
\newcommand{\LL}{\mathcal{L}}
\newcommand{\FF}{\mathcal{F}}
\newcommand{\MZ}{\setminus\{0\}} 
\newcommand{\PP}{\mathcal{P}}
\newcommand{\CS}{\mathcal{S}}
\newcommand{\A}{\mathcal{A} }	
\begin{document}

\title{An explicit Berry-Esséen bound for uniformly expanding maps on the interval}
\author{Loïc Dubois\footnote{The last part of this paper was done in the University of Helsinki, and was partially funded by
The European Research Council}\\ \\
	Department of Mathematics,\\
	University of Cergy-Pontoise,\\
	2 avenue Adolphe Chauvin,\\
	95302 Cergy-Pontoise Cedex, France.}
\date{\today}
\maketitle

\abstract{ For uniformly expanding maps on the interval, analogous versions of the Berry-Esséen theorem are known but only with an
unexplicit upper bound in $O(1/\sqrt{n})$ without any constants being specified.
 In this paper, we use the recent complex cone technique to prove an
explicit Berry-Esséen estimate with a reasonable constant for these maps. Our method is not limited to maps on the interval however and
should apply to many situations.
}

\section{Introduction}
Let $(X_n)_{n\geq 1}$ be a sequence of independent, identically distributed (iid) real random variables.
Assume $\esp[X_k]=0$, $\esp[X_k^2]=\sigma^2>0$ and $\esp[|X_k|^3]=\rho<\infty$ then
the Berry-Essen theorem (see for instance \cite{Fe71}) claims that
\begin{equation} \label{BE-iid}
\left| P\left(\frac{X_1+\dots+X_n}{\sigma\sqrt{n}}\leq x\right)
-\frac{1}{\sqrt{2\pi}}\int_{-\infty}^x e^{-t^2/2}dt\right|
\leq \frac{3\rho}{\sigma^3\sqrt{n}}.
\end{equation}
Thus, for iid sequences, not only we know the speed of convergence in the central
limit theorem, but we have also a very precise
bound, which makes possible practical estimates by the normal law.

For deterministic systems, the situation is not so simple. If the system under consideration enjoys
sufficient decay of correlations for some class of observables (usually lipschitz or of bounded variations
functions), then one can prove a central limit theorem along with an analogous version of
the Berry-Esséen theorem. However, one does not get such a nice bound as
(\ref{BE-iid}) but only a $O(1/\sqrt{n})$ without the implied constant being specified.

In the present paper, we prove an explicit Berry-Esséen bound with a
reasonable constant for uniformly expanding Markov transformations on the interval and for
lipschitz observables. The novelty here is in the word `explicit'. The central limit theorem for such
transformations and for bounded variations observables was studied in (\cite{Wong79}, \cite{Ke80}).
In \cite{RE83} (see also \cite{Br96}), a Berry-Esséen theorem
is proved but with a non-explicit $O(1/\sqrt{n})$ bound.
The Berry-Esséen theorem for shifts of finite type was studied in \cite{CP90} but again
without any explicit rate of convergence. The determination of a reasonable constant in Theorem 1
of \cite{CP90} was actually left as an open problem. Though the formulation of
Theorem 1 of Parry and Coelho (\cite{CP90})
is a little bit different from the Berry-Essen estimate we prove in Theorem
\ref{main-thm}, one can easily use our calculations to give an explicit constant in their theorem,
see Remark \ref{remark-PC90}. This is essentially a matter of presentation.

More precisely, we prove the following theorem. We consider the probability $P$ given by the Gibbs measure $m_0$
associated to the uniformly expanding map $T$. We assume that  $T$ satisfies some Markov
condition, namely that each inverse branch of $T$ is defined on $[0,1]$.
We denote also $\gamma=\inf |T'|>1$. The observable $f$ satisfies $\esp[f]=0$ and is supposed to be lipschitz.
Finally, we denote $\sigma^2=\lim(1/n)\esp[(S_n f)^2]$ where $S_n f =\sum_{k=0}^{n-1} f\circ T^k$.
See section \ref{section-notations} for more details on the setting. The constant $G$ below depends only on $T$.
\begin{theorem} \label{main-thm}
Assume that $\sigma>0$. Then we have for all $x\in\R$, all $n\geq 1$,
\begin{eqnarray*}
\Big| P\left(\frac{S_n f}{\sigma \sqrt{n}} \leq x\right) &-& \frac{1}{\sqrt{2\pi}}\int_{-\infty}^x
e^{-t^2/2}dt\Big| \\
&\leq& C\frac{\cosh^6(D_\R /4) \|f\|_\infty \left(\|f\|_\infty +|f|_\ell\right)^2}
{\sigma^3 \sqrt{n}}.
\end{eqnarray*}
In the preceding inequality, $C$ is a numerical constant (one may take $C=11460$).
The constant $D_\R$ depends only on the dynamic and can be taken to be
\begin{equation} \label{eq-DR}
 D_\R = \frac{2(\gamma^2G +1)}{\gamma(\gamma-1)} + 2\log\frac{2\gamma^2G+\gamma +1}{\gamma -1}.
\end{equation}
\end{theorem}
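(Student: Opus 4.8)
The plan is to run the classical Fourier-analytic proof of the Berry-Esséen theorem, replacing the characteristic function $\phi(t/\sqrt n)^n$ of a sum of independent summands by the quantity $m_0(\LL_z^n\un)$, and to make every constant explicit by means of the complex cone contraction estimates for the twisted transfer operator. Let $\LL_0$ denote the normalised Ruelle-Perron-Frobenius operator of $T$, so that $\LL_0\un=\un$ and $m_0\LL_0=m_0$, and for $z\in\C$ let $\LL_z g=\LL_0(e^{zf}g)$ be its twist by $e^{zf}$. A short induction, using $\LL_0((h\circ T)g)=h\,\LL_0 g$ and $m_0\LL_0=m_0$, gives the exact identity $\esp[e^{zS_nf}]=m_0(\LL_z^n\un)$, so with $z=it/(\sigma\sqrt n)$ the characteristic function of $S_nf/(\sigma\sqrt n)$ is $\widehat F_n(t)=m_0(\LL_z^n\un)$. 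By Esséen's smoothing inequality (see \cite{Fe71}), for every $T>0$, with $m=1/\sqrt{2\pi}$ the supremum of the standard Gaussian density,
\[
\sup_{x\in\R}\Big|P\big(\tfrac{S_nf}{\sigma\sqrt n}\le x\big)-\tfrac1{\sqrt{2\pi}}\int_{-\infty}^x e^{-t^2/2}\,dt\Big|\le\frac1\pi\int_{-T}^{T}\frac{\big|\widehat F_n(t)-e^{-t^2/2}\big|}{|t|}\,dt+\frac{24\,m}{\pi\,T}.
\]
It then remains to bound the integrand on a window $|t|\le T$, to take $T$ as large as that bound allows, and to collect constants; the term $24m/(\pi T)$ will produce the main term of the estimate.

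The backbone is a quantitative spectral decomposition of $\LL_z$. Complexifying the canonical Birkhoff cone of $\LL_0$ --- whose image under $\LL_0$ has Hilbert-metric diameter at most the quantity $D_\R$ of \eqref{eq-DR} --- and running the perturbation estimate, one obtains an explicit radius $r$, comparable to $1/(\|f\|_\infty+|f|_\ell)$ up to a $\cosh(D_\R/4)$-type factor, such that for all $|z|\le r$ the operator $\LL_z$ maps the associated complex cone strictly into itself with explicitly bounded diameter. Birkhoff contraction then yields, uniformly on $|z|\le r$: a simple leading eigenvalue $\lambda(z)$, analytic in $z$, with $\lambda(0)=1$ and $1/2\le|\lambda(z)|\le2$; a rank-one eigenprojector $P_z$; explicit moduli of continuity $|\lambda(z)-1|\le c_1|z|$ and $|m_0(P_z\un)-1|\le c_2|z|$; and an explicit spectral gap $\|\LL_z^n-\lambda(z)^nP_z\|\le C_1\kappa^n$ with $\kappa<1$. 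Writing $m_0(\LL_z^n\un)=\lambda(z)^nm_0(P_z\un)+m_0(R_z^n\un)$ with $|m_0(R_z^n\un)|\le C_1\kappa^n$, and noting that $z\mapsto m_0(R_z^n\un)$ vanishes at $z=0$ (since $\LL_0^n\un=\un=\lambda(0)^nP_0\un$), a Cauchy estimate on $|z|\le r/2$ upgrades this to $|m_0(R_z^n\un)|\le C_3\kappa^n|z|$; after dividing by $|t|$ and integrating over $|t|\le T$ the remainder contributes at most a term that is exponentially small in $n$, with rate governed by $D_\R$, and which is absorbed into the final bound of the stated form.

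Next set $\Lambda(z)=\log\lambda(z)$, analytic on $|z|<r$ with $\Lambda(0)=0$. Differentiating $m_0(\LL_z^n\un)=e^{n\Lambda(z)}m_0(P_z\un)+m_0(R_z^n\un)$ at $z=0$ and letting $n\to\infty$ gives $\Lambda'(0)=\esp[f]=0$ (the last equality by hypothesis) and $\Lambda''(0)=\lim_n\tfrac1n\esp[(S_nf)^2]=\sigma^2$ (the Green-Kubo identity). A Cauchy estimate for $\Lambda'''$, using $1/2\le|\lambda(z)|\le2$ on a disk whose radius is controlled from below by the cone data, produces an explicit bound $|\Lambda'''(z)|\le B$ for $|z|\le r'$, with $B$ of the form $(\text{absolute const})\cdot\cosh^6(D_\R/4)\,\|f\|_\infty(\|f\|_\infty+|f|_\ell)^2$ --- this is exactly the numerator appearing in the theorem. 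Choosing $T=3\sigma^3\sqrt n/(2B)$, and using that $\sigma^2$ is itself bounded by an absolute constant times $\|f\|_\infty(\|f\|_\infty+|f|_\ell)$ (exponential decay of correlations), one checks that $|t|\le T$ keeps $z=it/(\sigma\sqrt n)$ inside $|z|\le r'$ and $|z|\le r/2$. Taylor's formula with the bound on $\Lambda'''$ then gives, for $|t|\le T$,
\[
\big|n\Lambda(it/(\sigma\sqrt n))+\tfrac{t^2}{2}\big|\le\frac{n}{6}\,B\Big|\frac{t}{\sigma\sqrt n}\Big|^{3}=\frac{B\,|t|^{3}}{6\,\sigma^{3}\sqrt n}\le\frac{t^{2}}{4},
\]
so $\operatorname{Re}\big(n\Lambda(it/(\sigma\sqrt n))\big)\le-t^2/4$ and, by $|e^a-e^b|\le|a-b|\,e^{\max(\operatorname{Re}a,\operatorname{Re}b)}$, $\big|\lambda(z)^n-e^{-t^2/2}\big|\le\tfrac{B|t|^3}{6\sigma^3\sqrt n}e^{-t^2/4}$. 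Combining this with the contributions from $|m_0(P_z\un)-1|\le c_2|z|$ and from the remainder, and using $\int_\R t^2e^{-t^2/4}\,dt<\infty$ and $\int_\R e^{-t^2/4}\,dt<\infty$, the integral in Esséen's inequality is at most an absolute constant times $B/(\sigma^3\sqrt n)$, while $24m/(\pi T)=16mB/(\pi\sigma^3\sqrt n)$. Every term is thus of the form $(\text{absolute const})\cdot B/(\sigma^3\sqrt n)$, which is the asserted bound; tallying the absolute constants gives the admissible value $C=11460$.

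The genuinely delicate step --- the only one that is not a quantitative rerun of the classical proof --- is the spectral statement of the second paragraph: that the complex cone attached to $\LL_z$ is contracted, with finite and explicitly bounded diameter, on an explicitly large disk $|z|\le r$, and that $\kappa$, $C_1$, $r$, $c_1$, $c_2$ can all be written through $D_\R$. This rests on the Birkhoff-cone analysis of $\LL_0$ that produces the explicit diameter bound behind \eqref{eq-DR}, on the construction of the canonical complexification of that cone, and on the perturbative argument keeping $\LL_z$ inside the complex cone for $|z|\le r$; pushing the resulting constants down to a reasonable size is where the bulk of the work lies. Everything downstream --- the Fourier reduction, the Taylor expansion of $\log\lambda$, the third-cumulant estimate feeding $B$, and the final bookkeeping that yields $C=11460$ --- is then essentially routine.
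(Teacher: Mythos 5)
Your overall scaffolding --- Esséen's smoothing inequality, the identity $\esp[e^{zS_nf}]=m_0(\LL_z^n\un)$, the complex-cone contraction of $\LL_z$ on an explicit disk to produce the spectral data, a Taylor expansion of $\log\lambda$ with explicit derivative bounds, and a bookkeeping step --- matches the paper's. The route you take through the spectral decomposition, however, is genuinely different from the paper's and is where both the elegance and the constants live, so the comparison is worth spelling out.

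You write $m_0(\LL_z^n\un)=\lambda(z)^n m_0(P_z\un)+m_0(R_z^n\un)$ and propose to bound the remainder through an operator-norm estimate $\|\LL_z^n-\lambda(z)^nP_z\|\le C_1\kappa^n$, Cauchy-upgraded to get a factor $|z|$. This is the classical Nagaev--Guivarc'h decomposition; it works, but to make every constant explicit you must convert cone diameters into operator norms via the bounded-aperture constants $K$, $K'$ of the cone, and you must control $\|P_z\|$, $C_1$ and $\kappa$ explicitly in $z$. The paper avoids all of this. It factors $\esp[e^{zS_nf}]=e^{nP(z)}\varphi_n(z)$ with $\varphi_n(z)=\langle\nu(0),\LL(z)^nh(0)/\lambda(z)^n\rangle$ and observes that $\varphi_n(z)$ and $1$ both lie in the disk $E_\CC(u_n(z),h(0))$, whose log-max-to-min ratio is exactly $\delta_\CC(u_n(z),h(0))$. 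Since the cone contraction bounds that $\delta_\CC$-distance by $2D_\R+\Delta_0$ uniformly in $n$ and $z$, one gets directly $e^{-\Delta}\le|\varphi_n(z)|\le e^{\Delta}$. Then $\varphi_n$ is a uniformly bounded holomorphic family with $\varphi_n(0)=1$, $\varphi_n'(0)=0$, and a strip Schwarz--Pick argument (Lemma \ref{lem-ring}) gives a quadratic bound $|\varphi_n(z)-1|\le(\text{const})|z|^2$ with a constant depending only on $\Delta$ and $\delta_0$. No operator-norm estimate is ever needed; the cone geometry is used at the level of functionals only. That is the structural novelty that makes the explicit constant tractable, and your sketch does not touch it.

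There are also two places where your version is not merely less streamlined but leaves a gap. First, you assert $|m_0(P_z\un)-1|\le c_2|z|$. The linear bound is not good enough: divided by $|t|$ and integrated over $|t|\le T$, this contributes $\sim c_2/(\sigma\sqrt n)$ with a $c_2$ that scales linearly in $\|f\|_\lip$, which is not absorbed into the claimed form $\|f\|_\infty(\|f\|_\infty+|f|_\ell)^2/\sigma^3$. You need the derivative of $z\mapsto m_0(P_z\un)$ to vanish at $0$ (this does hold, by the same computation that gives $\varphi_n'(0)=0$ in the paper) to get a quadratic estimate, and you must then estimate the second-order coefficient explicitly. Second, your Cauchy estimate for $\Lambda'''$ from $\tfrac12\le|\lambda(z)|\le2$ is too coarse. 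The paper's Lemma \ref{lemma-pression} gives the sharper half-plane constraint $\Re P(z)\le|\Re z|\,\|f\|_\infty$ by bounding the spectral radius of $\LL(z)$ on $C(X;\C)$, and Lemma \ref{lemma-pression2} then exploits this half-plane image with a Möbius map and Schwarz--Pick, which is what makes the numerical constant come out manageable; a raw Cauchy estimate loses a multiplicative factor and would not reproduce $C=11460$.
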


I do not claim that this bound is optimal in any way. In fact, several choices in the proof of
Theorem \ref{main-thm} are a compromize to get a not too complicated formula. However, I believe that
to improve significantly this estimate, one has to improve the method. This can be seen by considering
the `size' of the last term in (\ref{eq-feller}).

The theorem is also valid if $T$ is an expanding map on any compact metric space (see section
\ref{section-notations}). The assumption that $T$ is Markov is quite strong and is
not strictly necessary. The method works as soon as one can find an (explicit) real cone which is
contracted by the transfer operator. It thus is possible to extend our result to more general expanding transformations
on the interval using the same cones as in \cite{Liv95bis}. However, this does not provide a completely
explicit bound (though constructible), and the formulas become quite complicated. We indicate in
section \ref{appendix-nonmarkov} how to extend our result to non-markov situations.

Our approach in this paper is similar to the spectral methods of \cite{CP90},
which rely on the equality
$$ \esp[\exp(it \sqrt{n}^{-1} S_n f)] =  \esp[ \LL(it/\sqrt{n})^n 1 ], $$
where $\LL(z)$ is a complex perturbation of the transfer operator $\LL$ (here normalized to have
$\LL 1= 1$).
The main difference--which allows to give an explicit bound--%
is that we replace standard perturbation theory of the spectrum (as in \cite{Kato80}) 
by the recent complex cones technique of Rugh (\cite{Rugh07}, \cite{Dub08}).
The idea is to compare the complex perturbation $\LL(z)$ with the positive operator $\LL=\LL(0)$. The operator
$\LL(0)$ contracts strictly a real cone with respect to the Hilbert metric, and under some conditions,
the operator $\LL(z)$ contracts strictly the complexification of this real cone with respect to a complex Hilbert metric.
This complex cone contraction gives better and simpler
bounds for both the size of the spectral gap of the perturbated operator $\LL(z)$ and the
size of the neighbourhood of $0$ in the complex plane on which the perturbated operator $\LL(z)$ has a spectral
gap. Since the leading eigenvalue of $\LL(z)$ along with its left and right eigenvectors depend
holomorphically on $z$, this is sufficient to get precise bounds.

Unfortunately, our method does not give easily explicit constants for the more refined estimates
of Parry and Coelho (\cite{CP90}).
Indeed, the Berry-Essen theorem only requires precise estimates of
the Taylor development of the Fourier transform $\esp[\exp(z S_n f)]$ --or equivalently, estimates
of the spectral gap of $\LL(z)$--for small
complex $z$. For further estimates, this is not enough, even in the independent case. One needs to know that
for all $t\in\R$ with $|t|\geq \delta>0$,
\begin{equation} \label{pb-refined}
\big|\esp[\exp(it S_n f)]\big| \xrightarrow[n\to\infty]{} 0.
\end{equation}
Of course, to get explicit constants for more refined estimates, one needs to have precise bounds for
the convergence in (\ref{pb-refined}).
In terms of the spectrum of $\LL(it)$, (\ref{pb-refined}) amounts to saying that the spectral radius of the
normalized transfer operator $\LL(it)$ is strictly less than $1$. This imposes conditions on the
observable. For subshifts of finite type, the spectral radius of the normalized transfer operator
$\LL(it)$ has spectral radius $1$ for some $t\neq 0$ if and only if the observable $f$ is
cohomologous to a continous function $l$ with values in $a+(2\pi/t)\mathbb{Z}$, or in other words, if and only
if there exists a continuous $\omega$ such that $f=l +\omega\circ T -\omega$ (see \cite{Po84}). If $f$
is not cohomologous to such a lattice valued function, then $f$ is called non-lattice.

The same problem arises for other kinds of limit theorems. For the local limit theorem (see for instance
\cite{RE83}), or for large deviation estimates, as soon as we know that $f$ is nonlattice, then one can apply
for instance the method of \cite{CS93} which gives strong large deviations; but if we want to explicit the
constants, one needs estimates for the convergence in (\ref{pb-refined}).

This paper is organized as follows. In Section \ref{section-ccone}, we briefly recall the necessary material
on complex cones. In Section \ref{sect-comp}, we prove Theorem \ref{thm-dh} and Theorem \ref{prop-comp2}.
Theorem \ref{prop-comp2} gives a general condition under which a complex operator `dominated' by a positive operator
is a complex cone contraction. Together with Theorem \ref{thm-dh}, it provides also an estimate of the
rate of contraction. These two theorems are actually direct extensions of Theorems 5.5 and 6.3 of \cite{Rugh07}.
The only additions --but essential here-- are the estimates of projective distances. It should be noticed however that the
original projective hyperbolic gauge in \cite{Rugh07} would lead (with additional work) to significantly worse estimates,
see Remark \ref{remark-estimates}.
The rest of the paper is devoted to the proof of Theorem \ref{main-thm}. In Section \ref{section-estdiam}, we develop the
dominated complex contraction argument in our situation, and finally, Section \ref{section-fourier} contains the proof
of the Berry-Esséen estimate.

Acknowledgment: the author expresses his deep thaks to Pr H.-H. Rugh for helping discussions during the preparation of this work.

\section{Notations} \label{section-notations}

Denote by $X=[0,1]$ the unit interval. We consider a metric $d$ on $X$ compatible with the topology of $X$ and
for which $X$ is of finite diameter at most $1$, ie $d(x,y)\leq 1$ for all $x$, $y$.
Denote by $\lipX$ (resp. $\lipXC$) the Banach algebra of all real
(resp. complex) valued bounded lipschitz functions on
$X$, endowed with the usual norm:
$$ \|u\|_\lip = \|u\|_{\infty} + \sup_{x\neq y} \frac{|u(x)-u(y)|}{d(x,y)}
 = \|u\|_{\infty} + |u|_\ell.
$$

We consider a map $T:X\to X$. We suppose that there exists a family of disjoint
open intervals $(a_j,b_j)$, $j\in J$ where $J$ is finite or countable, such that
$X=S\cup\bigcup_j (a_j,b_j)$, where $S$ is at most countable (or of null Lebesgue-measure). On each
$(a_j,b_j)$, the map $T$ is supposed to be differentiable and $T'x\neq 0$ for all $x\in(a_j,b_j)$.
We define $g(x)=-\log |T'x|$ for $x\in(a_j,b_j)$. The value of $g$ on $\{a_j,b_j\}$ is immaterial.
We also suppose that $T$ is
strictly monotonic on $(a_j,b_j)$ and maps the open interval $(a_j,b_j)$ onto $(0,1)$. We will denote
$\sigma_j:[0,1]\to [a_j,b_j]$ the inverse map of $T$ on $(a_j,b_j)$.
The change of variables formula implies that for all 
lipschitz functions $u$, $v$ on $X$ (or more generally, for $v\in L^\infty$ and $u\in L^1$)
\begin{equation} \label{eq-transfer-dual}
\int_X v\circ T (x). u(x) dx = \int_X v(x). \mathcal{L}u(x) dx.
\end{equation}
In (\ref{eq-transfer-dual}), $\LL$ is the
associated transfer operator and is defined by 
\begin{equation} \label{eq-transfer}
\LL u (x) = \sum_{j\in J} e^{g(\sigma_j x)} u(\sigma_j x).
\end{equation}

We make the following assumption.
\begin{assumption} \label{Assumption}\indent
\begin{enumerate}
\def\theenumi{(A\arabic{enumi})}
\item\label{A1} There exists $\gamma>1$ such that
for all $x,y\in X$, all $j\in J$,  $d(\sigma_j x,\sigma_j y) \leq \gamma^{-1} d(x,y)$;
\item\label{A2} There exists $G<\infty$, $G>0$ such that $\sup_{j\in J} |g\circ \sigma_j|_\ell\leq G$;
\item\label{A3} $\sup_{x\in X} \sum_j \exp(g(\sigma_j x))<\infty$.
\end{enumerate}
\end{assumption}
If $g\in\lipX$ then one may take $G=|g|_\ell \gamma^{-1}$ but when $J$ is countable, this is often too strong
a requirement, see example below.
 Since the diameter of $X$ is bounded, by \ref{A2}, Condition \ref{A3} holds as soon as
$\sum_j \exp(g(\sigma_j x))<\infty$ for some $x\in X$. For the potential $g=-\log |T'|$, this is automatic by
(\ref{eq-transfer-dual}).

\begin{remark}
Our proof is written for expanding maps on the interval but this is only a matter of presentation.
In the above, one can instead consider that
$(X,d)$ is any compact metric space not reduced to a single point, and whose diameter is not greater than
$1$. We assume then that $T:X\to X$ is continuous. We consider any map
$g:X\to\R$ and any family of maps $(\sigma_j)_{j\in J}$ ($J$ finite or
countable), $\sigma_j:X\to X$, such that $T\sigma_j (x)=x$ for all $x$,
 and satisfying the assumptions \ref{Assumption}.
The transfer operator $\LL$ is then defined for bounded functions $u$
by (\ref{eq-transfer}). Equation (\ref{eq-transfer-dual}) does
not make any sense in this setting and is replaced by the following, which is valid for all continuous
$u$, $v:X\to \C$,
\begin{equation*}
\LL[u\circ T\cdot v] = u \LL[v].
\end{equation*}
\end{remark}

\begin{example}
Consider the Gauss map $Tx=\{1/x\} = 1/x-\lfloor 1/x\rfloor$, and $T0=0$. On the interval $(1/(j+1),1/j)$,
we have $Tx= 1/x -j$ and $T'x=-1/x^2$. Hence, $g(x)=2\log x$, and for all $j\geq 1$, $\sigma_j(x)=1/(j+x)$.
Observe that, since $g$ is unbounded,
$|g|_\ell=\infty$ for all bounded metric $d$ on $X$ compatible with the topology of $X$. The maps $\sigma_1$ is
$1$-lipschitz for the usual metric on $X$.
However, using Mather's trick (see \cite{Math68}, and also
\cite{Rugh96}), one can construct an equivalent metric $d$ on $X$ for which \ref{A1} is satisfied.
Alternatively, one can use the following metric.
Let $\alpha\in(0,1/2)$ and consider the metric $(1-\alpha-\alpha s)ds$ or equivalently
$$ d_\alpha(x,y) = |x-y|(1-\alpha-\alpha(x+y)/2). $$
Then \ref{A1} is satisfied for $d_\alpha$ with $\gamma^{-1} = 1-\frac{5\alpha}{4}$.
 \ref{A2} holds since
$g\circ\sigma_j(x)=-2\log(j+x)$, and one may take $G=2(1-2\alpha)^{-1}$.
\end{example}

Using \ref{A1}-\ref{A3}, we get that
$\LL\in L(\lipXC)$ (where $L(\lipXC)$ denotes the set of all bounded linear operators
$\lipXC\to\lipXC$) and we
have $\|\LL\|_\lip \leq (1+Ge^G)\|\LL 1\|_\infty$.
The norm of $\LL$ when acting on $C(X;\C)$ (the
 Banach algebra of complex valued continuous functions on $X$ endowed with
$\|\cdot\|_\infty$) is given by $\|\LL 1\|_\infty$.

Let $f\in\lipX$ be a fixed observable.
We define the perturbated transfer operator
$\mathcal{L}(z):\lipXC\to\lipXC$, $z\in\C$,
by $$ \big[\mathcal{L}(z) u\big] (x) = \sum_{j\in J}e^{g(\sigma_j x) +
zf(\sigma_j x)} u(\sigma_j x) = \big[\LL(0)e^{zf}u\big](x). $$

When acting on the Banach algebra $\lipX$ of real-valued lipschitz functions on $X$,
the transfer operator has a spectral gap
(see \cite{Bo75}, \cite{Rue78}, \cite{Liv95} or
\cite{Zin99}).
(This is for instance because $\LL$ is a strict contraction
for the Hilbert metric of the cone $\CC_\R$ which is of
bounded aperture and of non-empty interior, see section \ref{section-estdiam}).
 More precisely, there exist $\lambda_0>0$, $h_0\in\lipX$, $h_0>0$, and $\nu_0\in\lipX'$
such that $ \mathcal{L} h_0 = \lambda_0 h_0 $,
$ \nu_0\mathcal{L} = \lambda_0 \nu_0 $,
and $ \langle \nu_0, h_0 \rangle = 1$. The remaining spectrum is contained in a disk of radius strictly smaller 
than $\lambda_0$. The operator $\lambda_0^{-n} \LL^n$ converges to the one-dimensional projection $h_0\otimes
\nu_0$ with exponential speed of convergence.
Moreover, the functional $m_0\in\lipX'$ defined by
$$\langle m_0, f \rangle = \langle \nu_0,  f h_0 \rangle $$
is a nonnegative linear functional on $\lipX$ and extends to a probability measure on $X$ which
is called the Gibbs state associated to the potential $g$.
For expanding maps on the interval, we normalize $h_0$ so that
$\int h_0(x)dx=1$. Then (\ref{eq-transfer-dual}) implies that $\lambda_0=1$ and that $\nu_0$ is
the Lebesgue measure on $X$.
In general, the measure $m_0$ is  a mixing (hence ergodic) $T$-invariant measure on $X$, see \cite{Bo75}.
We will denote $\esp [u]$ the expectation of $u$ with
respect to this invariant measure $m_0$.

Assume that the fixed observable $f\in\lipX$ satisfies $\esp[f] = 0$. Denote
$$ S_n f = \sum_{k=0}^{n-1} f\circ T^k. $$
In this situation, the following limit exists
\begin{equation} \label{eq-sigma}
 \sigma^2 = \lim_{n\to\infty} \frac{1}{n} \esp[(S_n f)^2].
\end{equation}
Moreover, $\sigma^2=0$  if and only if $f$ is a cocycle: $f=u\circ T- u$, $u\in L^2$. This last statement
along with the existence of the limit in (\ref{eq-sigma}) are, for instance, consequences of
Gordin's approximation by martingales (see \cite{LI96}, \cite{Go86}, \cite{IL71},
see also \cite{Br96} for a different proof).
More precisely, the exponential decay of correlations given by the spectral gap
property of the transfer operator shows that
one can write $$ f = \xi + u\circ T - u,$$
where $u\in L^2$ and $\esp[\xi\circ T^j | \xi\circ T^{j+1}, \xi\circ T^{j+2} \dots]=0$ (in other words
$\xi\circ T^j$ is a reversed martingale difference). The limit (\ref{eq-sigma}) is then
precisely $\sigma^2=\esp[\xi^2]$. 

\section{Complex cones} \label{section-ccone}

We recall in this section some definitions and material regarding
complex cones which can be found in \cite{Rugh07} and
\cite{Dub08}. We assume however that the reader is familiar with the Hilbert metric
(see \cite{Bir57}, \cite{Bir67} or \cite{Liv95}). A non-empty subset $\CC$ of a complex Banach space $V$ is said to be a {\em complex cone}
if $\C^*\CC\subset \CC$. We will also assume here that $0\notin \CC$. The cone $\CC$ is said to be
{\em proper} if its closure $\overline{\CC}$ does not contain any complex subspaces of dimension $2$.
The {\em dual complement} $\CC'\subset V'$ of $\CC$ is the set of all continuous linear functionals not
vanishing on $\CC$. The cone $\CC$ is said to be {\em linearly convex} if for any $x\notin \CC$, one can
find $f\in\CC'$ vanishing at $x$. 
In other words, for a complex cone, one has
$$ f\in\CC' \quad\iff\quad \forall x\in\CC,\:\:\langle f,x\rangle \neq 0, $$
and for a linearly convex cone one also has
$$ x\in\CC\quad\iff\quad \forall f\in\CC',\:\:\langle f,x\rangle \neq 0.$$
The dual complement, when non-empty, is always linearly convex.

Recall also the definition of the projective metric $\delta_{\mathcal{C}}$ of a proper complex
cone $\CC$. 
Let $x$, $y\in\mathcal{C}$, then $\delta_{\mathcal{C}}(x,y) = \log (b/a)\in [0,\infty]$,
 where $b$ and $a$ are respectively the
largest and smallest modulus of the set
$$E(x,y) = E_{\mathcal{C}}(x,y)=\{z\in\C:\: zx-y \notin \mathcal{C}\}.$$
When the cone is linearly convex, $\delta_\CC$ satisfies the triangular inequality and thus is really a
projective metric\footnote{
$\delta_{\mathcal{C}}$ is called a metric even though it may take infinite values.
Projective means that for any scalar $\alpha$, $\delta_\CC(\alpha x,y)=\delta_\CC(x,\alpha y)
=\delta_\CC(x,y)$.}.
When the cone $\CC$ is linearly convex, one also has the following description of $E_\CC(x,y)$
\begin{equation} \label{eq-ECxy}
 E_\CC(x,y) = \left\{ \frac{\langle f,y\rangle}{\langle f,x\rangle}:\:f\in \CC'\right\}.
\end{equation}
See \cite{Dub08} for more details. 

Finally, recall (see \cite{Rugh07}) that the cone $\mathcal{C}$ is said to be of
$K$-bounded sectional aperture if for
each vector subspace $\PP$ of (complex) dimension 2, one may find $m=m_\PP \in V'$, $m\neq 0$ such that
\begin{equation} \label{eq-KBSA}
\forall u\in \CC\cap \PP,\quad 
\|m\|\cdot\|u\| \leq K |\langle m,u\rangle|.
\end{equation}
When $m$ can be chosen independent of $\PP$,
or equivalently when $B(m,K^{-1}\|m\|)\subset\mathcal{C}'$,
$\CC$ is said to be of $K$-bounded (global) aperture.
The following Theorem is proved in \cite{Dub08}
(inequality \ref{p1} is established in the proof of Lemma 2.2
of \cite{Dub08}).

\begin{theorem} \label{th-contraction-delta}
\begin{enumerate}
\item \label{p2} Suppose that the cone $\CC$ is linearly convex and of bounded sectional aperture. Then
$(\CC/\sim,\delta_\CC)$ is a complete metric space, where $x\sim y$ if and only if $\C^* x= \C^* y$.
\item \label{p1} Suppose that the cone $\mathcal{C}$ is of 
$K$-bounded aperture and let
$m\in V'\MZ$ such that (\ref{eq-KBSA}) holds. Then for all 
$x$, $y\in\mathcal{C}$,
$$ \left\| \frac{x}{\langle m,x \rangle} - \frac{y}{\langle m,y\rangle} \right\| \leq \frac{K}{2\|l\|}
\delta_{\mathcal{C}}(x,y). $$
\item \label{p0} Let $A:V_1\to V_2$ be a complex linear map such that
$A \mathcal{C}_1 \subset \mathcal{C}_2$.
 Then for all $x$, $y\in\mathcal{C}_1$, $$\delta_{\mathcal{C}_2}(Ax, Ay) \leq \tanh(\Delta/4)
\delta_{\mathcal{C}_1}(x,y),$$ where $\Delta = \sup_{x,y\in\mathcal{C}_1} \delta_{\mathcal{C}_2}(Ax,Ay)$.
\end{enumerate}
\end{theorem}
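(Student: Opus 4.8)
The three parts are essentially independent, and all of them are classical in spirit: parts \ref{p0} and \ref{p2} go back to Birkhoff's study of the Hilbert metric \cite{Bir57}, recast for complex cones by Rugh \cite{Rugh07}, while part \ref{p1} is the quantitative ingredient isolated in \cite{Dub08}.

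For part \ref{p0} I would run Birkhoff's contraction argument in its complex form. The starting remark is that, since $A\CC_1\subset\CC_2$, one has $zx-y\in\CC_1\Rightarrow z\,Ax-Ay=A(zx-y)\in\CC_2$, hence $E_{\CC_2}(Ax,Ay)\subseteq E_{\CC_1}(x,y)$; comparing largest and smallest moduli this already gives the non-strict bound $\delta_{\CC_2}(Ax,Ay)\le\delta_{\CC_1}(x,y)$. To extract the genuine factor $\tanh(\Delta/4)<1$ I would pass to two-dimensional sections: $\delta_\CC(x,y)$ depends only on $\operatorname{span}(x,y)$, and on such a plane the projective metric of a proper cone is governed by the Poincaré metric of a disk determined by $E_\CC$ (Rugh's analysis via the canonical gauge, with (\ref{eq-ECxy}) available when the cone is linearly convex), while the map induced by $A$ between the model disks is holomorphic with image of hyperbolic diameter at most $\Delta$. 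The proof then reduces to the elementary fact that a holomorphic self-map of the unit disk $\mathbb{D}$ whose image has hyperbolic diameter $\le\Delta$ contracts the Poincaré metric by the factor $\tanh(\Delta/4)$: that image lies in a hyperbolic disk of radius $\Delta/2$, an automorphism carries it onto $\{|w|\le\tanh(\Delta/4)\}$, and one then chains Schwarz--Pick with the elementary estimate that $w\mapsto\tanh(\Delta/4)\,w$ contracts the Poincaré metric of $\mathbb{D}$ by exactly that factor. Taking the supremum over $x,y\in\CC_1$ makes $\Delta$ uniform.

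For part \ref{p1} I would first normalize so that $\langle m,x\rangle=\langle m,y\rangle=1$, putting $x$ and $y$ on the affine hyperplane $\{\langle m,\cdot\rangle=1\}$; the hypothesis $B(m,K^{-1}\|m\|)\subset\CC'$ then yields both the norm control $\|x\|,\|y\|\le K/\|m\|$ and a whole ball of admissible test functionals. Writing $\delta=\delta_\CC(x,y)$ and noting that $m\in\CC'$ forces $1\in E_\CC(x,y)$, pick $\phi\in V'$ with $\|\phi\|=1$ and $\langle\phi,x-y\rangle=\|x-y\|$, and feed the holomorphic, non-vanishing family $\zeta\mapsto\langle m+\zeta\phi,y\rangle/\langle m+\zeta\phi,x\rangle$, defined for $|\zeta|\le K^{-1}\|m\|$, into (\ref{eq-ECxy}): it equals $1$ at $\zeta=0$ and takes values in $E_\CC(x,y)$, whose ``size'' (bounded away from $0$ and $\infty$) is controlled by $\delta$ through the definition of the projective metric. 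A Schwarz--Pick estimate then bounds the derivative of this family at $\zeta=0$ — which equals $-\|x-y\|$ — by a universal multiple of $\delta/(K^{-1}\|m\|)$; tracking the geometry of $E_\CC(x,y)$ precisely (a disk rather than the full annulus, since $\CC'$ is itself a proper cone) is what produces the constant $\tfrac12$.

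For part \ref{p2} I would take a $\delta_\CC$-Cauchy sequence, choose representatives $x_n$ normalized through the functionals attached by the bounded sectional aperture hypothesis to the planes $\operatorname{span}(x_n,x_{n+1})$, and apply the sectional version of the estimate of \ref{p1} to conclude that $(x_n)$ is Cauchy for the norm of $V$, hence converges to some $x_\infty$; one then reads off $\delta_\CC(x_n,x_\infty)\to0$ from the definition. The delicate point — and the one I expect to be the main obstacle in the whole statement — is to verify that $x_\infty$ really lies in $\CC$, and not merely in $\overline{\CC}$ or at $0$: this is where linear convexity enters essentially, by evaluating $x_\infty$ against the functionals of $\CC'$ used in the normalization to see that $\langle f,x_\infty\rangle\ne0$ for enough $f$, and then invoking linear convexity to upgrade this to $x_\infty\in\CC$. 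By contrast, the content of \ref{p0} and \ref{p1} is conceptually routine once the two-dimensional reduction and the Schwarz--Pick mechanism are in place; there the only real care needed is the bookkeeping of the explicit constants $\tanh(\Delta/4)$ and $\tfrac12$.
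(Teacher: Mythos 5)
The paper does not prove Theorem~\ref{th-contraction-delta} itself: it is quoted verbatim from \cite{Dub08}, with the explicit pointer that part~\ref{p1} is extracted from the proof of Lemma~2.2 there and part~\ref{p0} is the main contraction theorem of \cite{Dub08}. So there is no in-paper proof to compare against; what can be judged is whether your reconstruction follows the strategy of the cited source, and broadly it does. Two points, however, are stated in a way that would mislead if taken literally.

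First, in part~\ref{p1} you attribute the constant $\tfrac12$ to ``$E_\CC(x,y)$ being a disk rather than the full annulus, since $\CC'$ is itself a proper cone.'' For a general complex cone $E_\CC(x,y)$ need not be a disk (it is a union of disks, cf.\ Lemma~4.1 of \cite{Dub08}; Lemma~\ref{lem-dh} only gives a disk when $x,y$ are both real). The disk that actually does the work is the image $F(D_\rho)$ of the test family $\zeta\mapsto\langle m+\zeta\phi,y\rangle/\langle m+\zeta\phi,x\rangle$, which is automatically a disk because $F$ is a M\"obius map of $\zeta$ with pole outside $\overline{D_\rho}$ (the pole cannot lie in $\overline{D_\rho}$ since $F$ is bounded there). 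One then maximizes $|F'(0)|$ over disks $D'\ni 1$ with $\sup|D'|/\inf|D'|\le e^{\delta}$, which gives the sharp bound $|F'(0)|\le 2\tanh(\delta/4)/\rho\le \delta/(2\rho)$ with $\rho=K^{-1}\|m\|$; that is where the $\tfrac12$ comes from, not from any structure of $E_\CC(x,y)$ itself.

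Second, in part~\ref{p0} you say the projective metric $\delta_\CC$ is ``governed by the Poincar\'e metric of a disk determined by $E_\CC$.'' That description fits Rugh's hyperbolic gauge $d_\CC$, not $\delta_\CC$; the paper's Remark~\ref{remark-estimates} is precisely about the fact that the two gauges are inequivalent (one can have $\delta_\CC$-diameter $O(\log k)$ while the $d_\CC$-diameter grows like $k$). The $\tanh(\Delta/4)$ contraction for $\delta_\CC$ is a separate theorem of \cite{Dub08}; one still reduces to two-dimensional sections and uses $E_{\CC_2}(Ax,Ay)\subseteq E_{\CC_1}(x,y)$ as you do, but the endgame is an explicit estimate comparing the extremal moduli of the smaller set to those of the larger one given the uniform diameter bound $\Delta$, rather than a direct Schwarz--Pick in a fixed disk. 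Your sketch of part~\ref{p2} (normalize via sectional aperture, transfer $\delta_\CC$-Cauchy to norm-Cauchy via part~\ref{p1}, then use linear convexity to show the limit is in $\CC$ and not merely in $\overline{\CC}\cup\{0\}$) is the correct outline.
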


\section{Comparison of operators} \label{sect-comp}

Let $V_{\R}$ be a real Banach space and $V_{\C}=V_{\R} \oplus iV_{\R}$ its complexification. We consider a
real nontrivial (meaning that it contains at least two independent vectors) closed proper convex cone 
$\mathcal{C}_{\R}\subset V_{\R}$ with dual cone $\mathcal{C}'_{\R}$. Recall
(see \cite{Rugh07}) the definition of the canonical complexification $\mathcal{C}_\C\subset V_{\C}$ of
$\mathcal{C}_{\R}$:
$$ \mathcal{C}_\C = \{ x\in V_{\C}:\: \forall l_1,\,l_2\in \mathcal{C}'_{\R},\, \Re\left(\langle
l_1,x\rangle \overline{\langle l_2,x \rangle} \right) \geq 0 \}. $$
The canonical complexification also satisfies $\mathcal{C}_\C
 = \C^*( \mathcal{C}_{\R} + i \mathcal{C}_{\R})$.
The complex cone $\CC_\C\MZ$ is a proper complex cone. 
In what follows, we consider $P:V_\R\to V_\R$ a real linear operator mapping $\CC_\R\MZ$ into itself, and
$A:V_\C\to V_\C$ a complex linear operator. We denote $\CC=\CC_\C\MZ$.

\begin{lemma} \label{prop-linearlyconvex}
Assume that there exists a linear functional $m\in\mathcal{C}'_{\R}$ such that $m>0$ on 
$\mathcal{C}_{\R}\MZ$. Then the cone $\CC=\mathcal{C}_\C\MZ \subset V_{\C}$ is linearly convex.
\end{lemma}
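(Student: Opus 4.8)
The plan is to verify the defining condition of linear convexity directly: for every $x\in V_\C$ with $x\notin\CC=\mathcal{C}_\C\MZ$ --- that is, either $x=0$ or $x\notin\mathcal{C}_\C$ --- I must produce a continuous linear functional $f\in\CC'$ with $\langle f,x\rangle=0$. The first observation is that $m$ itself (viewed in $V_\C'$) lies in $\CC'$: writing an arbitrary $y\in\mathcal{C}_\C\MZ$ as $y=\zeta(u+iv)$ with $\zeta\in\C^*$, $u,v\in\mathcal{C}_\R$ and $(u,v)\neq(0,0)$ (using $\mathcal{C}_\C=\C^*(\mathcal{C}_\R+i\mathcal{C}_\R)$), one has $\langle m,y\rangle=\zeta(\langle m,u\rangle+i\langle m,v\rangle)$, and the two nonnegative reals $\langle m,u\rangle,\langle m,v\rangle$ cannot both vanish since $m>0$ on $\mathcal{C}_\R\MZ$; hence $\langle m,y\rangle\neq0$. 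This already disposes of the case $x=0$ (take $f=m$) and shows $\CC'\neq\emptyset$.

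Now assume $x\neq0$ and $x\notin\mathcal{C}_\C$. By the very definition of the canonical complexification there exist $l_1,l_2\in\mathcal{C}'_\R$ with $\Re\big(\langle l_1,x\rangle\,\overline{\langle l_2,x\rangle}\big)<0$. Since the map $\epsilon\mapsto\Re\big(\langle l_1+\epsilon m,x\rangle\,\overline{\langle l_2+\epsilon m,x\rangle}\big)$ is continuous and negative at $\epsilon=0$, I can fix $\epsilon>0$ so that, setting $l_i'=l_i+\epsilon m\in\mathcal{C}'_\R$ and $z_i=\langle l_i',x\rangle$, one still has $\Re(z_1\overline{z_2})<0$ --- in particular $z_1,z_2\neq0$. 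I then take $f=z_2\,l_1'-z_1\,l_2'\in V_\C'$, which plainly satisfies $\langle f,x\rangle=z_2z_1-z_1z_2=0$.

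The crux is to check that this $f$ does not vanish anywhere on $\mathcal{C}_\C\MZ$, i.e.\ that $f\in\CC'$. Take $y=\zeta(u+iv)$ as above and suppose $\langle f,y\rangle=0$; this says $z_2w_1=z_1w_2=:w$ where $w_i=\langle l_i',u\rangle+i\langle l_i',v\rangle$ has nonnegative real and imaginary parts because $l_i'\in\mathcal{C}'_\R$ and $u,v\in\mathcal{C}_\R$. If $w=0$ then $w_1=0$, so $\langle l_1',u\rangle=\langle l_1',v\rangle=0$; but $l_1'=l_1+\epsilon m$ with $\epsilon>0$ and $l_1,m$ nonnegative on $\mathcal{C}_\R$, so strict positivity of $m$ forces $u=v=0$, contradicting $(u,v)\neq(0,0)$. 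If $w\neq0$ then $w/z_1$ and $w/z_2$ are nonzero complex numbers in the closed first quadrant, hence each has argument in $[0,\pi/2]$, so $\arg z_1-\arg z_2=\arg(w/z_2)-\arg(w/z_1)$ lies in $[-\pi/2,\pi/2]$ modulo $2\pi$, which gives $\Re(z_1\overline{z_2})\geq0$ and contradicts the choice of $\epsilon$. Hence $\langle f,y\rangle\neq0$, so $f\in\CC'$, and linear convexity follows. The main obstacle is exactly this last step: the naive functional $\langle l_2,x\rangle l_1-\langle l_1,x\rangle l_2$ that kills $x$ may well vanish on a nonzero $y\in\mathcal{C}_\C$ lying in $\ker l_1\cap\ker l_2$, and the point of perturbing by $\epsilon m$ is precisely to remove this degeneracy by exploiting that $m$ is strictly positive on the real cone.
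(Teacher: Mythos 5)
Your proof is correct. You verify linear convexity directly: given $x\neq 0$, $x\notin\CC_\C$, you pick $l_1,l_2\in\CC'_\R$ with $\Re(\langle l_1,x\rangle\overline{\langle l_2,x\rangle})<0$, perturb to $l_i'=l_i+\epsilon m$ (which keeps the strict negativity for small $\epsilon>0$ and, thanks to $m>0$ on $\CC_\R\MZ$, makes each $l_i'$ strictly positive on $\CC_\R\MZ$), and then exhibit the explicit killing functional $f=z_2 l_1'-z_1 l_2'$. The verification that $f\in\CC'$ is the crux, and your case analysis on $w=z_2w_1=z_1w_2$ is sound: the degenerate case $w=0$ is ruled out precisely because $l_1'$ is strictly positive on $\CC_\R\MZ$, and the nondegenerate case yields $\arg z_1-\arg z_2\in[-\pi/2,\pi/2]$, contradicting $\Re(z_1\overline{z_2})<0$.

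This is a genuinely different route from the paper's. The paper first introduces the auxiliary set $\mathcal{S}=\{f\in V_\C':\ \Re(\langle f,x\rangle\overline{\langle f,y\rangle})>0\text{ for all independent }x,y\in\CC_\R\}$, proves $\mathcal{S}=\CC'$, and then establishes the nontrivial implication (``$\langle f,x\rangle\neq0$ for all $f\in\mathcal{S}$ implies $x\in\CC$'') by contradiction, via the convex subcone $K=\{\langle l_1+il_2,x\rangle:l_1,l_2\in\CC'_\R\}\subset\C$: it shows $K$ would have to equal $\C$, while $-\langle m,x\rangle\notin K$. Your proof skips the characterization of $\CC'$ entirely and is constructive where the paper's is existential-by-contradiction; what the paper's detour buys is the reusable description of the dual complement $\CC'=\mathcal{S}$, whereas yours is shorter and more self-contained for the purpose of proving this particular lemma. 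Both arguments hinge on the same essential device: perturbing the separating functionals by the strictly positive $m$ to rule out the degenerate configuration in which the naive killing functional would vanish somewhere on $\CC_\C\MZ$.
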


We prove Lemma \ref{prop-linearlyconvex} in Appendix \ref{app-linconv}. When the condition of the Lemma fails, $\delta_\CC$ still satisfies the
triangular inequality on $\CC_\C$, see Remark 4.7 of \cite{Dub08}.

\begin{lemma} \label{lem-dh}
Let $x$, $y\in\CC_\R\MZ$ be independent. Then
\begin{enumerate}
\item \label{pdiam1}
$E_\CC(x,y)$ is the open disk of diameter $(a,b)$ where $0\leq a\leq b\leq \infty$ (if $b=\infty$,
it is the half-plane $\{w:\Re(w)> a\}$).
\item \label{pdiam2} $\delta_\CC(x+iy, x) \leq \delta_\CC(x,y)$.
\end{enumerate}
\end{lemma}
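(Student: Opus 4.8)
The key tool is the description (\ref{eq-ECxy}) of $E_\CC(x,y)$ as the set of ratios $\langle f,y\rangle/\langle f,x\rangle$ for $f\in\CC'$, which is available because $\CC$ is linearly convex (Lemma \ref{prop-linearlyconvex}). So the first step is to understand $\CC'$ concretely. For the canonical complexification $\CC_\C = \C^*(\CC_\R + i\CC_\R)$, the dual complement consists essentially of functionals of the form $l_1 + i l_2$ with $l_1, l_2 \in \CC'_\R$ (up to the usual scalar and closure subtleties). Given $f = l_1 + i l_2$ with $l_1,l_2\in\CC'_\R$ and $x,y\in\CC_\R$, the numbers $\langle l_k, x\rangle$ and $\langle l_k, y\rangle$ are all real and nonnegative (strictly positive when $m>0$ gives us interior-type positivity, but in any case $\geq 0$ and not both zero on the relevant functionals). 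Writing $p_k = \langle l_k, x\rangle \geq 0$ and $q_k = \langle l_k, y\rangle \geq 0$, the ratio is
$$
\frac{\langle f,y\rangle}{\langle f,x\rangle} = \frac{q_1 + i q_2}{p_1 + i p_2}.
$$
The plan is to show that as $(l_1,l_2)$ ranges over $\CC'_\R\times\CC'_\R$, this expression ranges over exactly an open disk whose diameter is a segment $(a,b)$ of the positive real axis (a half-plane $\Re(w)>a$ in the degenerate case $b=\infty$). Intuitively, $a$ and $b$ are the extreme values of the \emph{real} ratios $q_1/p_1$ obtained from $\CC'_\R$ (i.e.\ the endpoints coming from $E_{\CC_\R}(x,y)$), and the general complex ratio $(q_1+iq_2)/(p_1+ip_2)$ is a ``mediant''/Möbius combination of two such real ratios $q_1/p_1$ and $q_2/p_2$; the set of all such combinations of two points of an interval $[a,b]\subset(0,\infty)$ is precisely the disk with diameter $(a,b)$. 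This is the computational heart of part (\ref{pdiam1}) and the step I expect to be the main obstacle — one must check that nothing outside this disk occurs (which needs that $x,y\in\CC_\R$ forces $p_k,q_k\geq 0$, using the definition of $\CC_\C$ applied to test functionals) and that every point of the open disk is attained (using that $\CC'_\R$ separates enough: the interval of real ratios $q/p$, $l\in\CC'_\R$, is the full $E_{\CC_\R}(x,y)=(a,b)$, and then interpolating the real and imaginary parts independently). Since $x,y$ are independent one rules out $a=b$, so the disk is genuinely non-degenerate.

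For part (\ref{pdiam2}), with the disk description in hand the computation is short. By definition $\delta_\CC(x,y)=\log(b/a)$ where $(a,b)$ is the diameter of $E_\CC(x,y)$ along the positive real axis. For $\delta_\CC(x+iy,x)$ I use (\ref{eq-ECxy}) again: its elements are $\langle f,x\rangle/\langle f,x+iy\rangle = 1/(1 + i\,w)$ where $w=\langle f,y\rangle/\langle f,x\rangle$ ranges over $E_\CC(x,y)$, i.e.\ over the open disk $\mathcal{D}$ with diameter $(a,b)$. So $E_\CC(x+iy,x)$ is the image of $\mathcal{D}$ under the Möbius map $w\mapsto 1/(1+iw)$. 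The plan is to compute the smallest and largest modulus over this image set. Since $\mathcal{D}$ is the disk on the segment $(a,b)\subset(0,\infty)$, for $w\in\mathcal{D}$ the point $1+iw$ lies in an explicit disk in the right half-plane (tangent circles through $1+ia$ and $1+ib$), and $|1/(1+iw)|$ attains its extremes accordingly. A direct estimate gives
$$
\frac{b}{a} \;=\; e^{\delta_\CC(x,y)} \;\geq\; e^{\delta_\CC(x+iy,x)} \;=\; \frac{\max |1/(1+iw)|}{\min|1/(1+iw)|},
$$
i.e.\ the ratio of extreme moduli only shrinks. Concretely one checks $\max_{w\in\mathcal D}|1+iw|^{-1}\le (1+a^2)^{-1/2}\cdot(\text{something}\le 1)$-type bounds; more cleanly, one notes that $\log|1/(1+iw)|$ is harmonic in $w$ on $\mathcal D$, so its oscillation over $\mathcal D$ is controlled by its oscillation over $\partial\mathcal D$, and on the boundary circle through $ia, ib$ one verifies the oscillation of $\log|1+iw|$ is at most $\tfrac12\log(b/a)$; doubling (for the $\log b/a$ full diameter) still gives $\le \log(b/a)$.

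An alternative, cleaner route for (\ref{pdiam2}) that avoids the explicit Möbius geometry: use the general contraction principle. The map $\Phi:\CC\to\CC$, $\Phi(v)=v+i\cdot(\text{imaginary part adjustment})$ is not linear, so that does not directly apply; instead one can invoke monotonicity of $E$ under inclusion, observing $x+iy = x + iy$ and comparing $E_\CC(x+iy,x)$ to $E_\CC(x,y)$ via the identity $E_\CC(x+iy,x) = \{1/(1+iw): w\in E_\CC(x,y)\}$ together with the elementary fact that the map $w\mapsto 1/(1+iw)$ sends a disk with real diameter $(a,b)$, $0<a\le b$, into a set whose log-modulus-ratio (i.e.\ the $\delta$ of the ambient half-line-diameter disk containing it) is at most $\log(b/a)$. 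I would present whichever of these is shortest once part (\ref{pdiam1}) pins down the disk; the substantive work is all in (\ref{pdiam1}), and (\ref{pdiam2}) is then a one-paragraph Möbius/harmonicity estimate.
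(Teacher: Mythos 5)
Your plan for part (i) is in the same spirit as what the paper does (the paper simply cites Theorem~5.5 of Rugh and Lemma~4.1 of Dub08 and records the endpoints $a=\inf$, $b=\sup$ of $\langle m,y\rangle/\langle m,x\rangle$ over $m\in\CC_\R'$). One caveat: your identification of $\CC'$ with $\CC'_\R+i\CC'_\R$ is not quite right as stated -- the dual complement of $\CC_\C$ is in general strictly larger (see the characterization of $\mathcal{S}$ in the appendix on linear convexity). For the containment direction of the disk description it is cleaner to bypass $\CC'$ altogether and test $zx-y\in\CC_\C$ directly against pairs $l_1,l_2\in\CC'_\R$ from the definition of the canonical complexification: setting $p_j=\langle l_j,x\rangle$, $q_j=\langle l_j,y\rangle$, $r_j=q_j/p_j$, one finds $zx-y\in\CC_\C$ iff $(\Re z - r_1)(\Re z - r_2)+(\Im z)^2\geq 0$ for all such pairs, so $E_\CC(x,y)$ is exactly the union of the open disks with real diameter $(r_1,r_2)$, hence the disk with diameter $(a,b)$.

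For part (ii), you are working with $E_\CC(x+iy,x)$, which is $\{1/(1+iw):w\in E_\CC(x,y)\}$, a genuine M\"obius image of the disk; this forces you into the circle geometry or the harmonicity argument you sketch but do not carry out. The paper's trick is to use the symmetry $\delta_\CC(x+iy,x)=\delta_\CC(x,x+iy)$ and observe that
\[
E_\CC(x,x+iy)=1+i\,E_\CC(x,y),
\]
which is merely an affine translation-and-rotation. If $E_\CC(x,y)$ is the disk with diameter $(a,b)\subset(0,\infty)$, then $E_\CC(x,x+iy)$ is the disk of center $c_0=1+i\tfrac{a+b}{2}$ and radius $r=\tfrac{b-a}{2}$, and since $|c_0|\geq\tfrac{a+b}{2}$ and $t\mapsto(t+r)/(t-r)$ is decreasing for $t>r$,
\[
\delta_\CC(x,x+iy)=\log\frac{|c_0|+r}{|c_0|-r}\leq\log\frac{\tfrac{a+b}{2}+\tfrac{b-a}{2}}{\tfrac{a+b}{2}-\tfrac{b-a}{2}}=\log\frac{b}{a}=\delta_\CC(x,y).
\]
This two-line computation is the substantive content of part (ii); your proposed route would eventually yield the same bound but at the cost of an avoidable M\"obius/harmonicity detour that you left unfinished. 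I'd recommend adopting the affine reformulation -- it is both the point of the lemma and what makes the proof essentially immediate once (i) is in place.
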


\begin{proof}
\ref{pdiam1}.\ This is established in the proof of Theorem 5.5 of \cite{Rugh07}
(with different coordinates for the sections). It
is also a consequence of Lemma 4.1 of \cite{Dub08}.
The claimed result holds with $b=\sup\frac{\langle m,y\rangle}{\langle m,x\rangle }\leq \infty$ and
and $a=\inf\frac{\langle m,y\rangle}{\langle m,x\rangle }\geq 0$ where both the supremum and the infimum are taken
over all $m\in\CC_\R'$ such that $\langle m,x\rangle >0$.

\ref{pdiam2}.\ One may assume $\delta_\CC(x,y)<\infty$.
Then $E_\CC(x,y)$ is an open disk whose diameter is some interval $(a,b)$ such that $0<a< b<\infty$.
We have $E_\CC(x,x+iy) = 1 + iE_\CC(x,y)$. So $E_\CC(x,x+iy)$ is the open disk of center
$i(a+b)/2+1$ and radius $(b-a)/2$. Therefore, we have
$$ \delta_\CC(x,x+iy) = \log\frac{|1+i\frac{b+a}{2}| +\frac{b-a}{2}}
{|1+i\frac{b+a}{2}| -\frac{b-a}{2}}
\leq \log \frac{\frac{b+a}{2} + \frac{b-a}{2}}{\frac{b+a}{2}-\frac{b-a}{2}} = \delta_\CC(x,y). $$
\end{proof}

\begin{theorem} \label{thm-dh}
\begin{enumerate}
\item $ (\CC_\R\MZ, h_{\CC_\R}) \hookrightarrow (\CC_\C\MZ, \delta_{\CC_\C}) $
is an isometric embedding (where $h_{\CC_\R}$ denotes the Hilbert metric of $\CC_\R$).
\item The natural extension of $P$ to $V_\C$ (still denoted by $P$) maps the cone $\CC_\C\MZ$ into itself.
Let $\Delta_\R$ (resp. $\Delta_\C$) be the diameter of $P(\CC_\R\MZ)$ (resp. $P(\CC_\C\MZ)$)
for the Hilbert metric (resp. the projective metric $\delta$). Then
\begin{equation} \label{eq-esti-delta}
 \Delta_\C\leq 3\Delta_\R.
\end{equation}
\end{enumerate}
\end{theorem}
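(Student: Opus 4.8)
The plan is to prove the two parts in the order listed, since the second builds on the first. For part (1), I would use the fact that the canonical complexification satisfies $\CC_\C = \C^*(\CC_\R + i\CC_\R)$, so that for independent $x,y\in\CC_\R\MZ$ the restriction of $\delta_{\CC_\C}$ to the real two-plane $\PP = \mathrm{span}_\R(x,y)$ can be computed explicitly via Lemma \ref{lem-dh}\ref{pdiam1}: the set $E_{\CC_\C}(x,y)$ is the open disk of diameter $(a,b)$ with $a = \inf \langle m,y\rangle/\langle m,x\rangle$ and $b = \sup \langle m,y\rangle/\langle m,x\rangle$ over $m\in\CC_\R'$ with $\langle m,x\rangle>0$. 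Since $\delta_{\CC_\C}(x,y) = \log(b/a)$ from the disk description (the largest and smallest moduli of a disk of diameter $(a,b)$ with $0\le a\le b$ are $b$ and $a$), and since the same quantities $a$ and $b$ are exactly the endpoints controlling the Hilbert metric $h_{\CC_\R}(x,y)$ on the real cone — one has $h_{\CC_\R}(x,y) = \log(b/a)$ with $b$, $a$ the sup and inf of $t$ such that $tx - y$, resp. $y - tx$ (equivalently $tx-y$ on the other side), lies in $\CC_\R$, and these coincide with the dual-functional expressions by convexity of $\CC_\R$ — the two metrics agree. So the embedding $x\mapsto x$ is isometric. The one point to be careful about is matching the real-cone endpoints with the complex-cone endpoints; this is where I expect to invoke that $\CC_\R$ is closed and convex with $\CC_\R' $ separating points, so that membership in $\CC_\R$ is detected by all functionals in $\CC_\R'$.

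For part (2), that $P$ extended $\C$-linearly maps $\CC_\C\MZ$ into itself is immediate: if $x = x_1 + ix_2$ with $x_1,x_2\in\CC_\R$ then $Px = Px_1 + iPx_2$ with $Px_1,Px_2\in\CC_\R$, and one scales; more cleanly, $P$ commutes with the defining inequalities because for $l_1,l_2\in\CC_\R'$ we have $P^* l_1, P^* l_2 \in \CC_\R'$ (as $P\CC_\R\subset\CC_\R$), so $\Re(\langle l_1, Px\rangle\overline{\langle l_2,Px\rangle}) = \Re(\langle P^*l_1,x\rangle \overline{\langle P^*l_2,x\rangle}) \ge 0$. Now for the diameter estimate: take arbitrary $u,v\in\CC_\C\MZ$ in the image, say $u = P\xi$, $v = P\eta$ with $\xi,\eta\in\CC_\C\MZ$. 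Writing $\xi = \xi_1 + i\xi_2$, $\eta = \eta_1 + i\eta_2$ (after rescaling so both real and imaginary parts lie in $\CC_\R$), I would bound $\delta_{\CC_\C}(P\xi, P\eta)$ by inserting intermediate points $P\xi_1$ and $P\eta_1$ and using the triangle inequality (valid since $\CC_\C\MZ$ is linearly convex in the relevant cases, or by Remark 4.7 of \cite{Dub08}):
\begin{equation*}
\delta_{\CC_\C}(P\xi,P\eta) \le \delta_{\CC_\C}(P\xi, P\xi_1) + \delta_{\CC_\C}(P\xi_1, P\eta_1) + \delta_{\CC_\C}(P\eta_1, P\eta).
\end{equation*}
The middle term is between two images of real points, so by part (1) it equals $h_{\CC_\R}(P\xi_1, P\eta_1) \le \Delta_\R$. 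For the outer terms, apply Lemma \ref{lem-dh}\ref{pdiam2}: $\delta_{\CC_\C}(P\xi_1 + iP\xi_2, P\xi_1) \le \delta_{\CC_\C}(P\xi_1, P\xi_2) = h_{\CC_\R}(P\xi_1, P\xi_2) \le \Delta_\R$, using part (1) again and the fact that $P\xi_1, P\xi_2 \in P(\CC_\R\MZ)$. This gives $\delta_{\CC_\C}(P\xi,P\eta) \le 3\Delta_\R$, hence $\Delta_\C \le 3\Delta_\R$.

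The main obstacle, I expect, is the bookkeeping in the last step: one must ensure the decomposition $\xi = \C^*(\xi_1 + i\xi_2)$ with $\xi_1,\xi_2\in\CC_\R$ is legitimate for every $\xi\in\CC_\C\MZ$ (this is the content of $\CC_\C = \C^*(\CC_\R + i\CC_\R)$, which is cited) and that the projective metric $\delta_{\CC_\C}$ is genuinely invariant under the scalar factors so that replacing $\xi$ by $\xi_1 + i\xi_2$ changes nothing — this is exactly the "projective" property in the footnote. A secondary subtlety: if $\xi_1 = 0$ or $\xi_2 = 0$ the argument degenerates but only makes things easier (the point is already real, or purely imaginary hence a scalar multiple of a real point), so those cases should be dispatched separately at the start. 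One should also double-check that Lemma \ref{lem-dh} applies, i.e. that $P\xi_1$ and $P\xi_2$ are independent; if they are dependent then $\delta_{\CC_\C}(P\xi_1+iP\xi_2,P\xi_1)=0$ trivially and the bound still holds.
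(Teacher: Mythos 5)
Your proposal is correct and follows essentially the same route as the paper: part (1) is read off from Lemma \ref{lem-dh}\ref{pdiam1}, and part (2) is proved by the triangle inequality through the intermediate real points $P\xi_1$, $P\eta_1$, bounding the middle leg by $\Delta_\R$ via part (1) and each outer leg by $\Delta_\R$ via Lemma \ref{lem-dh}\ref{pdiam2} together with part (1). Your extra remarks on the degenerate cases $\xi_1=0$ or $\xi_2=0$ and on the projective invariance under the scalar factor $e^{i\theta}$ match the paper's tacit handling of these points (the paper writes $w_j = e^{i\theta_j}(x_j + iy_j)$ and restricts to $x_1,x_2 \neq 0$ without comment).
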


\begin{proof}
Denote $\CC=\CC_\C\MZ$.
Let $x$, $y\in\CC_\R\MZ$ be independent. Then the fact that $\delta_\CC(x,y)=h_{\CC_\R}(x,y)$ is
a consequence of Lemma \ref{lem-dh}\ref{pdiam1}.
Finally, let $w_1$, $w_2\in\CC$. Write $w_j=e^{i\theta_j}(x_j+iy_j)$, $x_j$, $y_j\in\CC_\R$.
Since $\delta_\CC$ satisfies the triangular inequality, and using Lemma \ref{lem-dh}\ref{pdiam2}, we have
(in the case where $x_1\neq 0$, $x_2\neq 0$)
\begin{eqnarray*}
 \delta_\CC(A w_1,A w_2) &\leq &\delta_\CC(A x_1 + iA y_1, A x_1)
+\delta_\CC(A x_1, A x_2) + \delta_\CC(A x_2, A x_2 + iA y_2)\\
&\leq& \delta_\CC(A x_1,A y_1) +\delta_\CC(A x_1,A x_2)
+\delta_\CC(A x_2, A y_2) \leq 3\Delta_\R.
\end{eqnarray*}
\end{proof}

\begin{lemma} \label{prop-comp}
Assume that $A$ does not vanish on the cone $\CC$. Assume also that there exists
$\tau$, $0\leq \tau<1$, such that for all $m$, $l\in\CC_\R'$ and all $x\in\CC$,
\begin{eqnarray} \label{eq-compare}
\big|\langle m,Px\rangle\langle l, Ax \rangle - \langle m,Ax\rangle\langle l, Px \rangle\big|
\leq\tau \Re\big(\overline{\langle m,Px\rangle}\langle l, Ax \rangle
+\overline{\langle l,Px\rangle}\langle m, Ax \rangle\big).
\end{eqnarray}
Then $A\CC\subset \CC$ and we have
$$ \sup_{x\in\CC} \delta_\CC(Ax,Px) \leq 3\log\frac{1+\tau}{1-\tau}. $$
\end{lemma}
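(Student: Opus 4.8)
The plan is to deduce Lemma~\ref{prop-comp} from the two-variable machinery of Theorem~\ref{thm-dh}, by viewing the pair $(P,A)$ as encoding a single linear map into a complexified cone. The key observation is that the hypothesis (\ref{eq-compare}) is exactly the statement that a certain combined operator lands inside the canonical complexification $\CC_\C$, so that the desired bound becomes an application of the estimate $\Delta_\C \leq 3\Delta_\R$.

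First I would introduce the auxiliary operator $B_z : V_\R \to V_\C$ defined by $B_z = P + zA$ for $z \in \C$ near $0$ (or, more symmetrically, consider the map sending $x \in \CC_\R$ to the pair $(Px, Ax)$ and form $Px + i\,Ax$ after suitable normalization). The point is to check that, under hypothesis (\ref{eq-compare}), the vector $Px + i\varepsilon Ax$ (or an appropriate combination) lies in $\CC_\C$ for all $x \in \CC_\C$ and all sufficiently small real $\varepsilon$; this is precisely what the defining inequalities $\Re(\langle l_1, \cdot\rangle \overline{\langle l_2,\cdot\rangle}) \ge 0$ for $l_1, l_2 \in \CC_\R'$ unwind to, once one expands $\langle l_1, Px + i\varepsilon Ax\rangle \overline{\langle l_2, Px + i\varepsilon Ax\rangle}$ and collects the cross terms — the coefficient of $\varepsilon$ is controlled by the left-hand side of (\ref{eq-compare}) and the surviving positive part is (a piece of) the right-hand side, while the $\varepsilon^2$ term is negligible. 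Then I would invoke Lemma~\ref{lem-dh}\ref{pdiam2}-style reasoning together with the explicit disk computation to relate $\delta_\CC(Ax, Px)$ to the quantity $\log\frac{1+\tau}{1-\tau}$: the set $E_\CC(Px, Ax)$ is governed by the ratios $\langle l, Ax\rangle / \langle m, Px\rangle$, and condition (\ref{eq-compare}) forces these ratios to lie in a region whose hyperbolic diameter (as seen by $\delta_\CC$) is at most $\log\frac{1+\tau}{1-\tau}$; the factor $3$ then comes from the triangle-inequality splitting exactly as in the proof of Theorem~\ref{thm-dh}.

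More concretely, I expect the cleanest route is: (i) show $A\CC \subset \CC$ directly from (\ref{eq-compare}) by taking $m = l$, which reduces the inequality to $0 \le \Re(\overline{\langle m, Px\rangle}\langle m, Ax\rangle)$ after noting the left side vanishes, hence $Ax$ is not separated from $Px$ by the real functionals and lies in $\CC_\C$; (ii) for fixed $x \in \CC$, analyze $E_\CC(Px, Ax) = \{\langle f, Ax\rangle / \langle f, Px\rangle : f \in \CC'\}$ via (\ref{eq-ECxy}), using linear convexity (Lemma~\ref{prop-linearlyconvex}, or the remark on the triangle inequality when it fails) to pass from general $f \in \CC'$ to pairs $m, l \in \CC_\R'$; (iii) translate (\ref{eq-compare}) into the statement that every element $w$ of this set satisfies $|w - \bar w\,\text{(something)}| \le \tau\,(\cdots)$, i.e. $w$ lies in a disk tangent to appropriate lines, and read off that $\delta_\CC(Px, Ax) \le \log\frac{1+\tau}{1-\tau}$ for each individual $x$; then (iv) bound $\sup_{x \in \CC}\delta_\CC(Ax, Px)$ by splitting an arbitrary $w_1, w_2$ comparison into three pieces and using the per-$x$ bound, gaining the factor $3$.

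The main obstacle I anticipate is step (iii): turning the bilinear inequality (\ref{eq-compare}), which is symmetric in the pair $(m,l)$ and quadratic in the functionals, into a clean geometric constraint on the single scalar ratio $w = \langle l, Ax\rangle / \langle m, Px\rangle$. The subtlety is that (\ref{eq-compare}) mixes $m$ and $l$, so one cannot simply set $m = l$ there (that kills the left-hand side); instead one must keep $m, l$ distinct, divide through by $\langle m, Px\rangle\overline{\langle l, Px\rangle}$ (checking these are nonzero, which uses $m, l \in \CC_\R'$ and $Px \in \CC_\R\MZ$), and recognize the result as saying that the two ratios $w_m = \langle m, Ax\rangle/\langle m, Px\rangle$ and $w_l = \langle l, Ax\rangle/\langle l, Px\rangle$ satisfy $|w_m - w_l| \le \tau(\Re w_m + \Re w_l)$ — a "bounded hyperbolic distance in the right half-plane" condition. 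From there the identity $\delta(\{w\}) = \log\frac{1+\tau}{1-\tau}$ for such a family is the same elementary disk estimate already carried out in Lemma~\ref{lem-dh}\ref{pdiam2}, and the rest is bookkeeping.
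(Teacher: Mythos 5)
Your proposal has two genuine gaps, one in step~(i) and one in the way you expect the factor $3$ to arise.

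\textbf{Gap 1: $m=l$ does not show $Ax\in\CC_\C$.}
Setting $m=l$ in (\ref{eq-compare}) does kill the left-hand side and gives
$\Re(\overline{\langle m,Px\rangle}\langle m,Ax\rangle)\geq 0$ for all $m\in\CC_\R'$.
But membership in $\CC_\C$ requires the much stronger condition
$\Re(\overline{\langle l_1,Ax\rangle}\langle l_2,Ax\rangle)\geq 0$ for \emph{all distinct pairs} $l_1,l_2\in\CC_\R'$,
and the one-functional positivity you derived does not imply it.
A concrete counterexample to the implication you invoke:
take $\CC_\R=\R_{\geq 0}^2$, $Px=(1,1)$, $Ax=(1+2i,\,1-2i)$.
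Then $\langle m,Px\rangle=\alpha_1+\alpha_2$ is real and
$\Re(\overline{\langle m,Px\rangle}\langle m,Ax\rangle)=(\alpha_1+\alpha_2)^2\geq 0$
for every $m=(\alpha_1,\alpha_2)\in\CC_\R'$, yet with $l_1=(1,0)$, $l_2=(0,1)$ one has
$\Re(\overline{\langle l_1,Ax\rangle}\langle l_2,Ax\rangle)=\Re((1-2i)^2)=-3<0$,
so $Ax\notin\CC_\C$.
The paper's actual argument keeps $m$ and $l$ independent: with $a=\langle m,Px\rangle$, $b=\langle m,Ax\rangle$, $c=\langle l,Px\rangle$, $d=\langle l,Ax\rangle$, hypothesis (\ref{eq-compare}) gives $|\overline{a}d+\overline{c}b|\geq|ad-bc|$, which algebraically factors as $4\Re(\overline{a}c)\Re(\overline{b}d)\geq 0$; one then proves $\Re(\overline{a}c)>0$ separately (via an auxiliary $\mu\in\CC_\R'$ and a perturbation $m+\epsilon\mu$, $l+\epsilon\mu$, $\epsilon\to 0$) to conclude $\Re(\overline{b}d)\geq 0$. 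This interplay between $m$ and $l$ is essential; the $m=l$ specialization alone is a strictly weaker statement.

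\textbf{Gap 2: the factor $3$ does not come from the $w_1,w_2$ splitting.}
Your step~(iii) anticipates a per-$x$ bound $\delta_\CC(Px,Ax)\leq\log\frac{1+\tau}{1-\tau}$, and step~(iv) then tries to recover the factor $3$ by splitting a $w_1,w_2$ comparison as in the proof of Theorem~\ref{thm-dh}. But $\delta_\CC(Ax,Px)$ is already the pointwise quantity being bounded for each fixed $x$; taking $\sup_x$ cannot triple it. In the paper, the factor $3$ is intrinsic to the per-$x$ estimate: by Lemma~4.1 of~\cite{Dub08}, $E_\CC(Px,Ax)$ is not a single disk but the \emph{union} $\bigcup_{(m,l)}D_{m,l}$ of Möbius-image disks, each with modulus-ratio $\leq\frac{1+\tau}{1-\tau}$. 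Two such disks may lie far apart, so one chains them: $D_{m,l}$ and $D_{m',l}$ share the boundary point $\langle l,Ax\rangle/\langle l,Px\rangle$, and $D_{m',l}$ and $D_{m',l'}$ share $\langle m',Ax\rangle/\langle m',Px\rangle$, whence $\sup|D_{m,l}|/\inf|D_{m',l'}|\leq\big(\frac{1+\tau}{1-\tau}\big)^3$. This chaining over the family of functional pairs, not a triangle-inequality decomposition of points of $\CC$, is what produces the $3$. Your translation $|w_m-w_l|\leq\tau(\Re w_m+\Re w_l)$ also silently treats $\overline{a}c$ as real positive, which it is not when $Px$ is a genuinely complex element of $\CC$; the paper only uses that $\Re(\overline{a}c)>0$ and builds the disk with center $(\overline{a}d+\overline{c}b)/(2\Re(\overline{a}c))$ accordingly.

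The broad intuition in your plan (Möbius images of half-planes, relating (\ref{eq-compare}) to bounded modulus-ratios of disks, using (\ref{eq-ECxy}) and linear convexity) is the right geometric picture, but as written the two steps above cannot be completed without the ingredients the paper actually uses.
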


\begin{remark}
Condition (\ref{eq-compare}) is stable under convex combinations. Therefore, it is sufficient to check
this condition for $m$, $l$ belonging to some generating subset $\mathcal{S}$ of
$\CC_\R'$. By a generating subset $\CS$ of $\mathcal{C}'_{\R}$, we mean 
that $\mathcal{C}_{\R}=\{x\in V_{\R}:\: \langle l,x\rangle\geq 0,\, \forall l\in\mathcal{S}\}$, or
equivalently, that $\mathcal{C}'_{\R} = \textrm{Cl}_{w*}\left( \R_+ \textrm{ch}(\mathcal{S})\right)$
where `$\textrm{ch}$' means `convex hull' and $\textrm{Cl}_{w*}$ denotes the closure with respect to the
weak-* topology.
\end{remark}

\begin{proof}
Fix $x\in\CC=\CC_\C\MZ$. Write $x=e^{i\theta}(u+iv)$, $u$, $v\in\CC_\R$. Pick up $\mu\in\CC_\R'$ for which
$\langle \mu, Pu\rangle +\langle \mu, Pv\rangle>0$ (recall that since $\CC_\R$ is proper, for any $w\in\CC_\R\MZ$,
one can find $\nu\in\CC_\R'$ such that $\langle \nu,x\rangle >0$).
Let $m$, $l\in\CC_\R'$, and suppose that for some $\epsilon>0$, we have
$m$, $l\geq \epsilon\mu$ on the cone $\CC_\R$. Write
\begin{equation} \label{eq-not}
 \begin{pmatrix}
\langle m,Px\rangle & \langle m,Ax\rangle\\
\langle l, Px \rangle & \langle l, Ax \rangle
\end{pmatrix}
= \begin{pmatrix}
a & b\\ c& d
\end{pmatrix}
\end{equation}
Our assumption implies that $|\overline{a}d +\overline{c}b| \geq |ad-bc|$, or equivalently
$2\Re(\overline{a}dc\overline{b})\geq - 2\Re(\overline{a}\overline{d}bc)$. So we have
$4\Re(\overline{a}c)\Re(\overline{b}d)\geq 0$.  We then get
$$ \Re(\overline{a}c)\geq \epsilon^2(\langle \mu, Pu\rangle^2 + \langle \mu, Pv\rangle^2)
= \epsilon^2 |\langle \mu, Px\rangle|^2 >0. $$
Therefore, $\Re(\overline{\langle m,Ax\rangle}\langle l,Ax\rangle)\geq 0$ for all $m$, $l\geq \epsilon\mu$.
This is also true for arbitrary $m$, $l\in\CC_\R'$ since we can apply the above argument to
$m+\epsilon\mu$, $l+\epsilon\mu$ and let $\epsilon\to 0$. This proves that $Ax\in \CC_\C\MZ$, since by assumption
we have also $Ax\neq 0$.
We turn now to estimate the distance between $Ax$, and $Px$.
Let $\FF$ be the family of all couple $(m,l)$ such that
$|ad-bc|>0$ (using the notations (\ref{eq-not})).
Then, by Lemma 4.1 of \cite{Dub08}, $E_{\CC_\C}(Px,Ax)=\bigcup_{(m,l)\in\FF} D_{m,l}$. Here
$D_{m,l}=\varphi_{m,l}(\{w:\Re(w)>0\})$ and $\varphi_{m,l}$ is the Möbius transformation given by
(using the notations (\ref{eq-not}))
$$ z= \varphi_{m,l}(w) = \frac{wb+d}{wa+c}. $$
Let $(m,l)\in\FF(Px,Ax)$. Our assumption yields $0<|ad-bc|\leq \tau \Re(\overline{a}d +\overline{c}b)$
which forces $\Re(\overline{a}c)\neq 0$. Therefore
the Möbius transformation $\varphi_{m,l}$ maps the half-plane
$\{w:\Re(w)>0\}$ onto the open disk $D_{m,l}$ of center $c_{m,l}$ and radius $r_{m,l}$ given by
$$ c_{m,l} =\frac{\overline{a}d+\overline{c}b}{2\Re(\overline{a}c)},
\quad\textrm{ and }\quad
r_{m,l} = \frac{|ad-bc|}{2\Re(\overline{a}c)}.$$
Then we have
$$ \frac{\sup|D_{m,l}|}{\inf|D_{m,l}|} = \frac{|c_{m,l}| +r_{m,l}}{|c_{m,l}| -r_{m,l}}
\leq \frac{1+\tau}{1-\tau}. $$
If $(m',l')\in\FF$, then
$\langle l,Ax\rangle /\langle l,Px\rangle=\varphi_{m,l}(0)=\varphi_{m',l}(0) $
belongs to both $\overline{D}_{m,l}$ and $\overline{D}_{m',l}$; and
$\langle m',Ax\rangle /\langle m',Px\rangle $ belongs to both $
\overline{D}_{m',l}$ and $\overline{D}_{m',l'}$. We thus have
$$ \frac{\sup|D_{m,l}|}{\inf|D_{m',l'}|} \leq \left(\frac{1+\tau}{1-\tau}\right)^3. $$
And the proposition follows.
\end{proof}

\begin{theorem} \label{prop-comp2}
Assume that the diameter $\Delta_P$ of $P(\CC_\R\MZ)$ with respect to the Hilbert metric of $\CC_\R$ is finite.
Assume also that there exists $\epsilon>0$ such
that we have for all $m\in\CC_\R'$ and
all $u\in\CC_\R$
\begin{equation} \label{eq-compare2}
 \left| \langle m,Au\rangle - \langle m,Pu\rangle \right| \leq\epsilon \langle m,Pu\rangle.
\end{equation}
Then if
$$
2\epsilon (1+\cosh(\Delta_P/2)) <1,
$$ then $A \CC\subset \CC$, and we have
$$ \sup_{x\in\CC} \delta_\CC(Ax,Px) \leq 3\log\frac{1}{1-2\epsilon(1+\cosh(\Delta_P/2))}. $$
\end{theorem}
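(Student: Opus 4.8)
The plan is to derive Theorem~\ref{prop-comp2} from Lemma~\ref{prop-comp}: I will show that hypothesis (\ref{eq-compare2}) forces condition (\ref{eq-compare}) to hold with
$$ \tau \;=\; \frac{\epsilon\,\bigl(1+\cosh(\Delta_P/2)\bigr)}{1-\epsilon\,\bigl(1+\cosh(\Delta_P/2)\bigr)}, $$
a number in $[0,1)$ precisely when $2\epsilon(1+\cosh(\Delta_P/2))<1$. Granting this, Lemma~\ref{prop-comp} gives $A\CC\subset\CC$ and $\sup_{x\in\CC}\delta_\CC(Ax,Px)\le 3\log\frac{1+\tau}{1-\tau}$, and since $1+\tau=\frac1{1-\epsilon(1+\cosh(\Delta_P/2))}$ while $1-\tau=\frac{1-2\epsilon(1+\cosh(\Delta_P/2))}{1-\epsilon(1+\cosh(\Delta_P/2))}$, the ratio is exactly $\frac1{1-2\epsilon(1+\cosh(\Delta_P/2))}$, as claimed. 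Both sides of (\ref{eq-compare}) scale by the same factor $|\lambda|^2$ when $x$ is replaced by $\lambda x$ ($\lambda\in\C^*$), so since $\CC_\C=\C^*(\CC_\R+i\CC_\R)$ it suffices to check (\ref{eq-compare}) for $x=u+iv$ with $u,v\in\CC_\R$. Fix such an $x$ and $m,l\in\CC_\R'$, use the notation (\ref{eq-not}) for $a=\langle m,Px\rangle$, $b=\langle m,Ax\rangle$, $c=\langle l,Px\rangle$, $d=\langle l,Ax\rangle$, and set $\beta=b-a=\langle m,(A-P)x\rangle$, $\delta=d-c=\langle l,(A-P)x\rangle$; note $\Re a=\langle m,Pu\rangle\ge 0$, $\Im a=\langle m,Pv\rangle\ge 0$, and similarly for $c$.

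The verification rests on three estimates. First, applying (\ref{eq-compare2}) to the real vectors $u$ and $v$ gives $|\Re\beta|\le\epsilon\Re a$, $|\Im\beta|\le\epsilon\Im a$, hence $|\beta|\le\epsilon|a|$, and symmetrically $|\delta|\le\epsilon|c|$; since $ad-bc=a\delta-\beta c$, this already yields $|ad-bc|\le 2\epsilon|a||c|$. Second, expanding $\overline{a}\delta$ and $\overline{c}\beta$ in real and imaginary parts, $\Re(\overline{a}\delta)=\Re a\,\Re\delta+\Im a\,\Im\delta\ge-\epsilon(\Re a\,\Re c+\Im a\,\Im c)=-\epsilon\Re(\overline{a}c)$ and likewise $\Re(\overline{c}\beta)\ge-\epsilon\Re(\overline{a}c)$; as $\overline{a}d+\overline{c}b=2\Re(\overline{a}c)+\overline{a}\delta+\overline{c}\beta$, this gives $\Re(\overline{a}d+\overline{c}b)\ge 2(1-\epsilon)\Re(\overline{a}c)$. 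Third — the geometric heart of the matter — I claim
$$ \Re(\overline{a}c)\;\ge\;\frac{|a||c|}{\cosh(\Delta_P/2)}. $$
If $a=0$ or $c=0$ then (\ref{eq-compare}) is trivial (both sides vanish, since then also $\beta=0$ or $\delta=0$ and $b=0$ or $d=0$), and if $u=0$ or $v=0$ then $\Re(\overline{a}c)=|a||c|$ and the inequality is clear; so assume $u,v\in\CC_\R\MZ$. Then $Pu,Pv\in P(\CC_\R\MZ)$, which has Hilbert diameter $\Delta_P<\infty$ and is therefore contained in $\inte\CC_\R$, so $\langle m,Pu\rangle,\langle m,Pv\rangle,\langle l,Pu\rangle,\langle l,Pv\rangle$ are all positive. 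By Birkhoff's variational formula, $h_{\CC_\R}(Pu,Pv)=\log\sup\frac{\langle m',Pu\rangle\langle l',Pv\rangle}{\langle m',Pv\rangle\langle l',Pu\rangle}$ over $m',l'\in\CC_\R'$, so the cross-ratio $\frac{\langle m,Pu\rangle\langle l,Pv\rangle}{\langle m,Pv\rangle\langle l,Pu\rangle}$ and its reciprocal are both $\le e^{\Delta_P}$. Writing $\langle m,Pu\rangle/\langle m,Pv\rangle=e^{s}$ and $\langle l,Pu\rangle/\langle l,Pv\rangle=e^{t}$ with $|s-t|\le\Delta_P$, one computes $\Re(\overline{a}c)/(|a||c|)=\cosh\tfrac{s+t}{2}\big/\sqrt{\cosh s\,\cosh t}$, and an elementary optimization (the minimum over $|s-t|\le\Delta_P$ is attained at $s=-t=\Delta_P/2$) bounds this below by $1/\cosh(\Delta_P/2)$.

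It remains to combine the three estimates. Writing $K=1+\cosh(\Delta_P/2)\ge 2$ and using $2\epsilon K<1$, a short cross-multiplication shows $\tau=\frac{\epsilon K}{1-\epsilon K}\ge\frac{\epsilon\cosh(\Delta_P/2)}{1-\epsilon}$ (the required inequality reduces to $\epsilon K(K-2)\ge-1$, which holds since $K\ge 2$). Hence
$$ \tau\,\Re(\overline{a}d+\overline{c}b)\;\ge\;2(1-\epsilon)\tau\,\Re(\overline{a}c)\;\ge\;2\epsilon\cosh(\Delta_P/2)\,\Re(\overline{a}c)\;\ge\;2\epsilon|a||c|\;\ge\;|ad-bc|, $$
which is exactly (\ref{eq-compare}). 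Since $0\le\tau<1$, Lemma~\ref{prop-comp} applies and concludes the proof. The part I expect to cost the most work is the third estimate: pinning down the sharp constant $\cosh(\Delta_P/2)$ in the lower bound for $\Re(\overline{a}c)/(|a||c|)$ via the Birkhoff formula and the optimization, and dispatching the boundary cases cleanly — in particular, justifying that $\Delta_P<\infty$ places $P(\CC_\R\MZ)$ in the interior of $\CC_\R$ (or, as in the proof of Lemma~\ref{prop-comp}, replacing $m,l$ by $m+\eta\mu,\,l+\eta\mu$ and letting $\eta\to 0$). The remainder is just the algebra of the $2\times2$ determinant $ad-bc$ together with the near-positivity of $\beta$ and $\delta$ relative to $a$ and $c$.
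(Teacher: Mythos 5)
Your argument is essentially correct and follows the same overall route as the paper: reduce to Lemma~\ref{prop-comp} with the exact same $\tau=\frac{\epsilon K}{1-\epsilon K}$, $K=1+\cosh(\Delta_P/2)$. The algebra is organized a bit differently — and, I would say, more transparently. Where the paper expands $\Re(\overline a d+\overline c b)$ and $|ad-bc|$ componentwise in the eight real quantities $\langle m,Pu\rangle,\dots,\langle l,Av\rangle$ and bounds each bracket, you introduce $\beta=b-a$, $\delta=d-c$ and split the estimate into a pure modulus bound ($|ad-bc|=|a\delta-\beta c|\le 2\epsilon|a||c|$), a ``near-positivity'' bound ($\Re(\overline a d+\overline c b)\ge 2(1-\epsilon)\Re(\overline a c)$), and the geometric bound $\Re(\overline a c)\ge |a||c|/\cosh(\Delta_P/2)$. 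The third one is precisely inequality~(\ref{eq-pol}), which the paper imports from Rugh's Theorem~6.3; your derivation via the cross-ratio $\langle m,Pu\rangle\langle l,Pv\rangle/\langle m,Pv\rangle\langle l,Pu\rangle$ and the formula $\Re(\overline a c)/(|a||c|)=\cosh\frac{s+t}{2}/\sqrt{\cosh s\cosh t}$ with the optimization at $s=-t=\Delta_P/2$ is correct and a nice self-contained addition. Your cross-multiplication at the end, $\tau\ge\frac{\epsilon\cosh(\Delta_P/2)}{1-\epsilon}$ because $\epsilon K(K-2)\ge -1$ for $K\ge 2$, also checks out. In fact your estimates (before weakening $\tau'$ up to $\tau$) give a slightly better $\tau'=\frac{\epsilon(K-1)}{1-\epsilon}$, so your route is mildly sharper than the paper's.

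One omission: Lemma~\ref{prop-comp} carries the standing hypothesis ``$A$ does not vanish on $\CC$,'' and you never verify it; you go straight to condition~(\ref{eq-compare}). The paper devotes a short paragraph to this (picking $\mu\in\CC_\R'$ with $\langle\mu,Pu\rangle,\langle\mu,Pv\rangle>0$, then using $\epsilon<1/4$ to control the argument of $\langle\mu,Au\rangle$ and $\langle\mu,Av\rangle$). With your setup the fix is immediate — for such a $\mu$ you have $\Re\langle\mu,Ax\rangle=\langle\mu,Au\rangle\ge(1-\epsilon)\langle\mu,Pu\rangle>0$ by~(\ref{eq-compare2}), so $Ax\ne 0$ — but the step should be stated, since without it the application of Lemma~\ref{prop-comp} is not licensed.
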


\begin{remark}
The condition (\ref{eq-compare2}) is also stable under convex combinations, so one only has to check it
for a generating subset of $\CC_\R'$.
\end{remark}

\begin{proof}
We show that the assumptions of Lemma \ref{prop-comp} are satisfied.
Let $x\in\CC_\C\MZ$ and write $x=e^{i\varphi}(u+iv)$, $u$, $v\in\CC_\R$. Up to modifying $\varphi$, one
might assume $u\neq 0$, $v\neq 0$.
First we prove that $Ax\neq 0$. 
Pick $\mu\in\CC_\R'$ for which $\langle \mu,Pu\rangle>0$ and
$\langle \mu,Pv\rangle >0$.
Then, the condition on $\epsilon$ forces $\epsilon<1/4\leq \sin(\pi/12)$.
So (\ref{eq-compare2}) implies $\langle \mu,Au\rangle\neq 0$ and
$|\arg \langle \mu, Au\rangle|\leq \pi/12$.
The same is true for $v$ so we cannot have $\langle \mu, A(u+iv)\rangle =0$.
Now we establish (\ref{eq-compare}).
Let $m$, $l\in\CC_\R'$.
The following inequality is established in the proof of Theorem 6.3 in \cite{Rugh07}
\begin{equation} \label{eq-pol}
|\langle m,Px\rangle\langle l,Px\rangle| \leq \cosh\left(\frac{\Delta_P}{2}\right)
\Re(\overline{\langle m,Px\rangle}\langle l,Px\rangle).
\end{equation}
To establish (\ref{eq-compare}), one might assume that $m$, $l>0$ on
$\{Pu,Pv\}$ (otherwise, consider $m+t\mu$, $l+t\mu$, $t>0$, $t\to0$).
Now we have
\begin{eqnarray*}
&&\Re\big(\overline{\langle m,Px\rangle}\langle l, Ax \rangle
+\overline{\langle l,Px\rangle}\langle m, Ax \rangle\big) \\
&=&
\langle m,Pu\rangle\langle l,Pu\rangle\Re\left(
\frac{\langle l,Au\rangle}{\langle l,Pu\rangle}+
\frac{\langle m,Au\rangle}{\langle m,Pu\rangle}
\right)+
\langle m,Pv\rangle\langle l,Pv\rangle\Re\left(
\frac{\langle l,Av\rangle}{\langle l,Pv\rangle}+
\frac{\langle m,Av\rangle}{\langle m,Pv\rangle}
\right)\\
&+&
\langle m,Pv\rangle\langle l,Pu\rangle\Im\left(
\frac{\langle l,Au\rangle}{\langle l,Pu\rangle}-
\frac{\langle m,Av\rangle}{\langle m,Pv\rangle}
\right)-
\langle m,Pu\rangle\langle l,Pv\rangle\Im\left(
\frac{\langle l,Av\rangle}{\langle l,Pv\rangle}-
\frac{\langle m,Au\rangle}{\langle m,Pu\rangle}
\right)\\
&\geq& 2(1-\epsilon)\Big(\langle m,Pu\rangle\langle l,Pu\rangle
+\langle m,Pv\rangle\langle l,Pv\rangle\Big)
-2\epsilon\Big(\langle m,Pv\rangle\langle l,Pu\rangle
+\langle m,Pu\rangle\langle l,Pv\rangle\Big)\\
&=&2(1-\epsilon)\Re\Big(\overline{\langle m,Px\rangle}{\langle l,Px\rangle}\Big)
-2\epsilon \Im\Big(\langle m,Px\rangle\langle l,Px\rangle\Big)\\
&\geq& 2\Big(1-\epsilon\left(1+\cosh\left(\Delta_P/2\right)\right)\Big)
\Re\Big(\overline{\langle m,Px\rangle}{\langle l,Px\rangle}\Big).
\end{eqnarray*}
In the same way, we have
\begin{eqnarray*}
&&\big|\langle m,Px\rangle\langle l, Ax \rangle
-\langle l,Px\rangle\langle m, Ax \rangle\big| \\
&=&\Big|\langle m,Pu\rangle\langle l,Pu\rangle\left(
\frac{\langle l,Au\rangle}{\langle l,Pu\rangle}-
\frac{\langle m,Au\rangle}{\langle m,Pu\rangle}
\right)+
\langle m,Pv\rangle\langle l,Pv\rangle\left(
\frac{\langle m,Av\rangle}{\langle m,Pv\rangle}-
\frac{\langle l,Av\rangle}{\langle l,Pv\rangle}
\right)\\
&+&
i\langle m,Pv\rangle\langle l,Pu\rangle\left(
\frac{\langle l,Au\rangle}{\langle l,Pu\rangle}-
\frac{\langle m,Av\rangle}{\langle m,Pv\rangle}
\right)+
i\langle m,Pu\rangle\langle l,Pv\rangle\left(
\frac{\langle l,Av\rangle}{\langle l,Pv\rangle}-
\frac{\langle m,Au\rangle}{\langle m,Pu\rangle}
\right)\Big|\\
&\leq&2\epsilon\Re\Big(\overline{\langle m,Px\rangle}{\langle l,Px\rangle}\Big)
+2\epsilon \Big|\langle m,Px\rangle\langle l,Px\rangle\Big|\\
&\leq& 2\epsilon(1+\cosh(\Delta_P/2))\Re\Big(\overline{\langle m,Px\rangle}{\langle l,Px\rangle}\Big).
\end{eqnarray*}
So we can apply Lemma \ref{prop-comp} with
$$ \tau = \frac{\epsilon(1+\cosh(\Delta_P/2))}{1-\epsilon(1+\cosh(\Delta_P/2))}. $$
\end{proof}

\begin{remark} \label{remark-estimates}
 Denote by $d_\CC$ Rugh's hyperbolic gauge (see \cite{Rugh07} for definitions).
Then $(\CC_\R\MZ, h_{\CC_\R}) \hookrightarrow (\CC_\C\MZ, d_{\CC_\C})$ is also an isometric embedding.
Regarding the second part of Theorem \ref{thm-dh},
if $\Delta_\R<\infty$, then so is the diameter $\Delta_{\textrm{hyp}}$ of $A(\CC_\C\MZ)$ for $d_\CC$
(see \cite{Rugh07}, Proposition 5.9). It is also possible to give a general bound 
for $\Delta_{\textrm{hyp}}$ : as a consequence of Proposition 5.7 of \cite{Dub08} and Theorem \ref{thm-dh},
$$ \Delta_{\textrm{hyp}} \leq \pi\sqrt{2} \exp(3\Delta_\R/2). $$
The constants in the preceding inequality might not be optimal, but one cannot get rid
of the exponential. Indeed, in Remark 5.8 of \cite{Dub08}, we provided an example of a sequence of positive $3\times 3$ matrix $A_k$ for which
the Hilbert diameter (and also the $\delta_\CC$-diameter)
is $O(\log k)$ but the $d_\CC$-diameter is not lesser than $k\log 2$. So in general, (\ref{eq-esti-delta})
is much better than what can be obtained with hyperbolic gauges. 

\end{remark}

\section{Cones and estimate of diameters} \label{section-estdiam}

Following \cite{Rugh07}, we define also the real cone $\mathcal{C}_{\R} \subset \lipX$ by
$$ \mathcal{C}_{\R} = \{ u\in \lipX :\: \langle l_{x,y}, u\rangle \geq 0,\, \forall x,y\in X\}, $$
where $\langle l_{x,y} , u \rangle = e^{Bd(x,y)} u(y) - u(x)$.
We define $B$ to be
$$ B = \frac{\gamma G+1}{\gamma -1}>0. $$
Actually, $B>\gamma G/(\gamma-1)$ would be sufficient, we make this particular choice to have a simple expression
for $\epsilon(z)$ below.
Observe that if $u\in \mathcal{C}_{\R}$ then $u(x)\geq 0$, $\forall x$.
We denote by $\mathcal{C}_\C\subset \lipXC$ 
the canonical complexification of the cone $\mathcal{C}_{\R}$, and we define
$$\mathcal{C} = \CC_\C\MZ = \C^*(\mathcal{C}_\R + i \mathcal{C}_\R)\MZ. $$
Recall also that $\CC_\C=\{u\in\lipXC:\:\Re\big(\overline{\langle l_{x,y},u\rangle}\langle l_{x',y'},u\rangle
\big)\geq 0,\forall x,y,x',y'\in X\}$.

The cone $\CC$ is of bounded aperture and linearly convex and the cone $\CC'$ is also of bounded
aperture. Indeed,
let fix $y\in X$. Let $u\in\mathcal{C}_{\R}$. Then $\|u\|_{\infty} \leq e^B u(y)$. Moreover
$$ u(x)-u(x')\leq u(x)(1-e^{-Bd(x,x')}) \leq u(x)Bd(x,x') \leq Be^Bu(y)d(x,x').$$
Therefore, $|u|_{\textrm{Lip}}\leq Be^Bu(y)$, and $\|u\|_\lip\leq (B+1)e^B u(y)$. Let now $w\in\mathcal{C}$, 
then $w=e^{i\alpha}(u+iv)$, $u$, $v\in\mathcal{C}_{\R}$. So
\begin{equation} \label{eq-regext}
\|w\|_\lip \leq (B+1)e^B(u(y)+v(y)) \leq \sqrt{2}(B+1)e^B |w(y)|=K|w(y)|.
\end{equation}
So the cone $\mathcal{C}$ is linearly convex by Proposition \ref{prop-linearlyconvex} since the 
linear functional $u\mapsto u(y)$ is positive on $\mathcal{C}_{\R}\MZ$ for any $y\in X$.
Denoting $C_1=\max(1,Be^B)>0$, we have for all $h\in\lipXC$ and $x,y\in X$,
\begin{equation} \label{eq-estim-int-2}
 |\langle l_{x,y},h\rangle|\leq |h(y)|(e^{Bd(x,y)}-1) + |h(y)-h(x)|\leq C_1\|h\|_\lip d(x,y).
\end{equation}
Therefore, $\Re\big(\overline{\langle l_{x,y},1+h\rangle}\langle l_{x',y'},1+h\rangle\big)
\geq \big(B^2-2C_1^2\|h\|-C_1^2\|h\|^2\big) d(x,y)d(x',y')$. Hence,
the constant function $1$ is in the interior of $\mathcal{C}$,
say $B(1,1/K')\subset\mathcal{C}$. Thus $\mathcal{C}'$ 
is of $K'$-bounded aperture: for all $m\in\mathcal{C}'$,
\begin{equation} \label{eq-regint}
\|m\|\leq K'|\langle m,1\rangle|.
\end{equation}

\begin{lemma} \label{lemma-epsilonz}
Let $x$, $y\in X$, $x\neq y$, $u\in\CC_\R\MZ$, and $z\in\C$. Then
\begin{equation*}
\left| \frac{\langle l_{x,y}, \LL(z)u\rangle}{\langle l_{x,y}, \LL u\rangle} -1\right| \leq \epsilon(z),
\end{equation*}
where $\epsilon(z)$ is given by
\begin{equation*}
\epsilon(z) = e^{|\Re(z)|\|f\|_\infty} |z| \left(\|f\|_\infty + 
|f|_\ell\right).
\end{equation*}
\end{lemma}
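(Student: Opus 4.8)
The plan is to reduce the estimate to a single per‑inverse‑branch inequality in which the defining value $B=(\gamma G+1)/(\gamma-1)$ of the cone plays a decisive role. First observe that $\LL(z)u=\LL[e^{zf}u]$ and $\LL u=\LL[u]$, so by linearity of $l_{x,y}$,
\[
\langle l_{x,y},\LL(z)u\rangle-\langle l_{x,y},\LL u\rangle=\langle l_{x,y},\LL[hu]\rangle,\qquad h:=e^{zf}-1 .
\]
It therefore suffices to prove the ``multiplier'' bound
\[
\bigl|\langle l_{x,y},\LL[hu]\rangle\bigr|\ \le\ \|h\|_\lip\,\langle l_{x,y},\LL u\rangle
\]
for all $u\in\CC_\R\MZ$ and all bounded Lipschitz $h$, together with the elementary estimate $\|e^{zf}-1\|_\lip\le e^{|\Re(z)|\|f\|_\infty}|z|(\|f\|_\infty+|f|_\ell)=\epsilon(z)$; the latter comes from applying $|e^a-e^b|\le|a-b|\,e^{\max(\Re a,\Re b)}$ pointwise (with $b=0$) and along pairs of points (with $a=zf(x)$, $b=zf(y)$).

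For the multiplier bound, write $t=d(x,y)$ and expand
\[
\langle l_{x,y},\LL[hu]\rangle=\sum_{j}\Bigl(e^{Bt}e^{g(\sigma_j y)}h(\sigma_j y)u(\sigma_j y)-e^{g(\sigma_j x)}h(\sigma_j x)u(\sigma_j x)\Bigr),
\]
splitting each term as $h(\sigma_j y)\,I_j+e^{g(\sigma_j x)}u(\sigma_j x)\bigl(h(\sigma_j y)-h(\sigma_j x)\bigr)$, where $I_j:=e^{Bt}e^{g(\sigma_j y)}u(\sigma_j y)-e^{g(\sigma_j x)}u(\sigma_j x)$. By \ref{A3} the series converge and $\sum_j I_j=\langle l_{x,y},\LL u\rangle$. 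The first pieces contribute at most $\|h\|_\infty\sum_j|I_j|$ in modulus, while the second pieces contribute at most $|h|_\ell\sum_j e^{g(\sigma_j x)}u(\sigma_j x)\,d(\sigma_j x,\sigma_j y)\le|h|_\ell\,\gamma^{-1}t\,\LL u(x)$ by \ref{A1}.

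Everything then rests on the pointwise inequality
\[
I_j\ \ge\ \gamma^{-1}t\,e^{g(\sigma_j x)}u(\sigma_j x)\ \ge\ 0 .
\]
Granting it: $I_j\ge0$ makes $\sum_j|I_j|=\langle l_{x,y},\LL u\rangle$, and summing it over $j$ gives $\langle l_{x,y},\LL u\rangle\ge\gamma^{-1}t\,\LL u(x)$, so the second contribution above is $\le|h|_\ell\,\langle l_{x,y},\LL u\rangle$; adding the two contributions yields the multiplier bound. To prove the pointwise inequality one may assume $u(\sigma_j x)>0$ and divide through by $e^{g(\sigma_j x)}u(\sigma_j x)$: membership $u\in\CC_\R$ gives $u(\sigma_j y)\ge e^{-Bd(\sigma_j x,\sigma_j y)}u(\sigma_j x)\ge e^{-B\gamma^{-1}t}u(\sigma_j x)$ (the last step by \ref{A1}), and $g(\sigma_j y)-g(\sigma_j x)\ge-Gt$ by \ref{A2}, hence
\[
\frac{e^{Bt}e^{g(\sigma_j y)}u(\sigma_j y)}{e^{g(\sigma_j x)}u(\sigma_j x)}\ \ge\ e^{(B(1-\gamma^{-1})-G)\,t}\ =\ e^{t/\gamma}\ \ge\ 1+\gamma^{-1}t ,
\]
the middle equality being precisely $B(1-\gamma^{-1})-G=\gamma^{-1}$, i.e. the defining relation for $B$.

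The hard part is this last per‑branch estimate: it works only because of the algebraic identity forced by the choice of $B$, and because the lower bound is recorded against the $x$‑term $e^{g(\sigma_j x)}u(\sigma_j x)$ (the one subtracted in $l_{x,y}$) rather than the $y$‑term. A minor issue is that $\langle l_{x,y},\LL u\rangle$ could a priori vanish; then the multiplier bound forces $\langle l_{x,y},\LL(z)u\rangle$ to vanish as well, and the quotient in the statement is read accordingly (in the interval setting $\LL u\not\equiv0$, so $\langle l_{x,y},\LL u\rangle>0$ in fact).
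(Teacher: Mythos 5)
Your proof is correct and rests on exactly the same essential facts as the paper's: positivity of each branch term $I_j$, forced by the algebraic identity $B(1-\gamma^{-1})-G=\gamma^{-1}$, together with a separate sup-norm and Lipschitz treatment of $e^{zf}-1$. The paper packages this as a convex-combination estimate, writing the quotient as $\sum_j \langle l_j,u\rangle Z_j/\sum_j\langle l_j,u\rangle$ with $\langle l_j,u\rangle=I_j>0$ and bounding $|Z_j-1|\le\epsilon(z)$ branch by branch (absorbing the factor $\gamma^{-1}$ into the lower bound $t_j\ge d(x,y)/\gamma$ on the logarithmic gap), so your ``multiplier-bound'' reformulation is the same computation in different bookkeeping.
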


\begin{proof}
We have
\begin{eqnarray}
\frac{\langle l_{x,y}, \LL(z)u\rangle}{\langle l_{x,y}, \LL u\rangle}
&=& \frac{\sum_j \langle l_{j},u\rangle Z_{j}}
{\sum_j \langle l_{j},u\rangle}, \label{eq-cczj}
\end{eqnarray}
where
\begin{eqnarray}
\langle l_j,u\rangle &=& e^{Bd(x,y)+g(\sigma_j y)}u(\sigma_j y) - e^{g(\sigma_j x)}u(\sigma_j x)>0,
\nonumber\\
Z_j&=&\frac{e^{t_j}e^{zf(\sigma_j y)} - e^{zf(\sigma_j x)}}
{e^{t_j}-1},\nonumber\\
t_j&=&B d(x,y) + g(\sigma_j y) - g(\sigma_j x) +\log u(\sigma_j y) - \log u(\sigma_j x)
\geq 
\frac{d(x,y)}{\gamma}.
\label{eq-tj}
\end{eqnarray}
Now, we have
\begin{eqnarray}
|Z_j - 1| &=& \left|e^{z f(\sigma_j y)} -1 + \frac{e^{z f(\sigma_j y)} - e^{z f(\sigma_j x)}}
{e^{t_j}-1} \right| \nonumber\\
&\leq & e^{|\Re(z)|\|f\|_\infty} |z|\|f\|_\infty + 
\left|\frac{e^{z f(\sigma_j y)} - e^{z f(\sigma_j x)}}{e^{t_j}-1}\right|.\label{eq-zj}
\end{eqnarray}
Suppose for instance that $\Re(zf(\sigma_j y))\geq \Re(zf(\sigma_j x))$, and write
$z\big(f(\sigma_j y)-f(\sigma_j x)\big)=\alpha_j+i\beta_j$, $\alpha_j\geq 0$, $\beta_j\in\R$.
 Then, using (\ref{eq-tj}),
\begin{eqnarray*}
\left|\frac{e^{z f(\sigma_j y)} - e^{z f(\sigma_j x)}}{e^{t_j}-1}\right|
&=& e^{\Re(z)f(\sigma_j y)}\left|\frac{1 - e^{-\alpha_j-i\beta_j} }{e^{t_j}-1}\right|\\
&\leq& e^{|\Re(z)|\|f\|_\infty}\frac{|\alpha_j+i\beta_j|}{t_j} \\
&=&e^{|\Re(z)|\|f\|_\infty}\frac{|z||f(\sigma_j x)-f(\sigma_j y)|}{t_j} 
\leq e^{|\Re(z)|\|f\|_\infty} 
|z||f|_\ell.
\end{eqnarray*}
Combining with (\ref{eq-zj}), we get that $|Z_j-1|\leq \epsilon(z)$. The result follows since the ratio in
(\ref{eq-cczj}) is a (possibly infinite) convex combination of $Z_j$.
\end{proof}

In order to apply Proposition \ref{prop-comp2}, we need the well-known estimate of the Hilbert diameter
of $\LL(\CC_\R\MZ)$.

\begin{lemma} \label{lemma-51}
\begin{enumerate}
\item \label{qdiam1}
$\mathcal{L} (\mathcal{C}_{\R}\MZ) \subset
\mathcal{C}_{\R}\MZ$, and the diameter of $\mathcal{L} (\mathcal{C}_{\R}\MZ)$ with respect to the
Hilbert metric of $\mathcal{C}_{\R}$ is not greater than $D_\R$ ($D_\R$ defined by (\ref{eq-DR})).
\item \label{qdiam2}
The diameter of $\LL\CC$ with respect to $\delta_\CC$ is not greater than $D=3D_\R$. If $u\in\CC_\R\MZ$ and
$w\in \CC$, one has also the better estimate
$\delta_\CC(\LL u, \LL w)\leq 2 D_\R$.
\end{enumerate}
\end{lemma}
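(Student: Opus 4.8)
The plan is to prove part \ref{qdiam1} along classical lines — show that $\LL$ sends $\CC_\R$ into a strictly narrower cone of the same shape, then estimate directly the Hilbert diameter of that narrower cone — and to obtain part \ref{qdiam2} almost formally from Theorem \ref{thm-dh} and Lemma \ref{lem-dh}. For part \ref{qdiam1}, I would put $B'=B-\frac{1}{\gamma}=\frac{\gamma^2G+1}{\gamma(\gamma-1)}$, so that $0<B'<B$ and $G+B/\gamma=B'$. Given $u\in\CC_\R\MZ$ and $x,y\in X$, combining $u(\sigma_jx)\le e^{Bd(\sigma_jx,\sigma_jy)}u(\sigma_jy)$ with the contraction \ref{A1} and the Lipschitz bound \ref{A2} for $g\circ\sigma_j$ gives, for each $j$, $e^{g(\sigma_jx)}u(\sigma_jx)\le e^{B'd(x,y)}e^{g(\sigma_jy)}u(\sigma_jy)$; summing over $j$,
\begin{equation*}
\LL u(x)\le e^{B'd(x,y)}\,\LL u(y)\qquad(x,y\in X).
\end{equation*}
Since any $u\in\CC_\R\MZ$ is strictly positive, $\LL u$ is strictly positive, bounded by \ref{A3}, and the last display shows it is Lipschitz; hence $\LL u\in\CC_\R^{B'}\MZ\subset\CC_\R\MZ$, where $\CC_\R^{B'}$ denotes the cone defined exactly as $\CC_\R$ but with $B$ replaced by $B'$. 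In particular $\LL(\CC_\R\MZ)\subset\CC_\R\MZ$.

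To bound the diameter, fix $u,v\in\CC_\R\MZ$, set $w=\LL u$, $w'=\LL v$, and write $N_{x,y}(h)=\langle l_{x,y},h\rangle=e^{Bd(x,y)}h(y)-h(x)$. Describing the Birkhoff metric through the functionals $l_{x,y}$ — with $N_{x,y}(w')>0$ for $x\neq y$ — gives $h_{\CC_\R}(w,w')=\log(\beta/\alpha)$, where $\beta$ and $\alpha$ are respectively the supremum and the infimum of $N_{x,y}(w)/N_{x,y}(w')$ over $x\neq y$, so that
\begin{equation*}
\frac{\beta}{\alpha}=\sup_{x\neq y,\ x'\neq y'}\ \frac{N_{x,y}(w)\,N_{x',y'}(w')}{N_{x,y}(w')\,N_{x',y'}(w)}.
\end{equation*}
From the two-sided bound $e^{-B'd}h(y)\le h(x)\le e^{B'd}h(y)$ (with $d=d(x,y)$), valid for $h\in\{w,w'\}$ since $w,w'\in\CC_\R^{B'}$, one deduces
\begin{equation*}
e^{B'd}\big(e^{d/\gamma}-1\big)\,h(y)\ \le\ N_{x,y}(h)\ \le\ e^{-B'd}\big(e^{(B+B')d}-1\big)\,h(y).
\end{equation*}
The key point is that the refined upper bound (rather than the crude $N_{x,y}(h)\le e^{Bd}h(y)$) stays comparable to the lower bound as $d\to0$. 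Substituting, and using that $\log w$ and $\log w'$ are $B'$-Lipschitz while $\diam X\le1$, the quotient above is at most $e^{2B'}\phi(d(x,y))\phi(d(x',y'))$ with
\begin{equation*}
\phi(d)=e^{-2B'd}\,\frac{e^{(B+B')d}-1}{e^{(B-B')d}-1}.
\end{equation*}
Finally, setting $s=B-B'=\frac{1}{\gamma}$ and $r=2B'$, the inequality $\phi(d)\le\gamma(B+B')$ is equivalent to $\frac{1-e^{-rd}}{r}\le\frac{e^{sd}-1}{s}$, which holds for every $d\ge0$ since $\frac{1-e^{-rd}}{r}\le d\le\frac{e^{sd}-1}{s}$. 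Hence $\beta/\alpha\le e^{2B'}\big(\gamma(B+B')\big)^2$, i.e.
\begin{equation*}
h_{\CC_\R}(\LL u,\LL v)\ \le\ 2B'+2\log\big(\gamma(B+B')\big)=D_\R,
\end{equation*}
the last equality being checked directly against (\ref{eq-DR}), since $2B'=\frac{2(\gamma^2G+1)}{\gamma(\gamma-1)}$ and $\gamma(B+B')=\frac{2\gamma^2G+\gamma+1}{\gamma-1}$. This proves part \ref{qdiam1}.

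For part \ref{qdiam2}: the extension of $\LL$ to complex-valued functions maps $\CC=\CC_\C\MZ$ into itself (Theorem \ref{thm-dh}) and, by part \ref{qdiam1}, maps $\CC_\R\MZ$ into itself with Hilbert diameter at most $D_\R$; so (\ref{eq-esti-delta}) gives at once that the $\delta_\CC$-diameter of $\LL\CC$ is at most $3D_\R=D$. For the sharper bound, take $u\in\CC_\R\MZ$ and $w\in\CC$ and write $w=e^{i\theta}(p+iq)$ with $p,q\in\CC_\R$ not both zero. If $p=0$ or $q=0$, then by projectivity of $\delta_\CC$ and Theorem \ref{thm-dh}(1), $\delta_\CC(\LL u,\LL w)$ equals $h_{\CC_\R}(\LL u,\LL q)$ or $h_{\CC_\R}(\LL u,\LL p)$, which is $\le D_\R$ by part \ref{qdiam1}. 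Otherwise $p,q\in\CC_\R\MZ$, and using the triangular inequality for $\delta_\CC$ (valid since $\CC$ is linearly convex), projectivity, and Lemma \ref{lem-dh}\ref{pdiam2},
\begin{equation*}
\delta_\CC(\LL u,\LL w)=\delta_\CC(\LL u,\LL p+i\LL q)\le\delta_\CC(\LL u,\LL p)+\delta_\CC(\LL p,\LL p+i\LL q)\le\delta_\CC(\LL u,\LL p)+\delta_\CC(\LL p,\LL q).
\end{equation*}
By Theorem \ref{thm-dh}(1) both terms on the right are Hilbert distances between $\LL$-images of elements of $\CC_\R\MZ$, hence are $\le D_\R$ by part \ref{qdiam1}; therefore $\delta_\CC(\LL u,\LL w)\le 2D_\R$.

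The one genuinely delicate step is the diameter estimate in part \ref{qdiam1}: one must find an upper bound on $N_{x,y}(h)$ tight enough to survive the limit $d(x,y)\to0$, and then recognize that the resulting one-variable function $\phi$ is dominated, uniformly in $d$, by precisely the quantity $\gamma(B+B')$ appearing inside the logarithm in (\ref{eq-DR}). Everything in part \ref{qdiam2} is then bookkeeping on top of Theorem \ref{thm-dh} and Lemma \ref{lem-dh}.
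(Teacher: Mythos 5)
Your proof is correct and follows essentially the same route as the paper: you establish $\LL u(x)\le e^{B'd(x,y)}\LL u(y)$ with $B'=G+B\gamma^{-1}=B-1/\gamma$, estimate the Hilbert diameter through the generating functionals $l_{x,y}$ via the same two-sided bounds on $N_{x,y}(\LL u)$ (your $\phi(d)$ is algebraically identical to the paper's factor $(e^{Bd}-e^{-B'd})/(e^{Bd}-e^{B'd})$, which the paper bounds by $(B+B')/(B-B')=\gamma(B+B')$ without showing the work, whereas you supply the short argument via $(1-e^{-rd})/r\le d\le (e^{sd}-1)/s$), and then deduce part (ii) from Theorem \ref{thm-dh} and Lemma \ref{lem-dh}\ref{pdiam2} exactly as the paper indicates.
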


\begin{proof} 
\ref{qdiam1}.\ 
This part of the proof is classical. One has for $u\in\mathcal{C}_{\R}\MZ$
\begin{eqnarray}
\big[\mathcal{L}u\big](x) &=& \sum_{j\in J}
e^{g(\sigma_j x)} u(\sigma_j x) 
\leq e^{(G+B\gamma^{-1})d(x,y)}\sum_{j\in J}
 e^{g(\sigma_j y)}
u(\sigma_j y)\nonumber\\
& =& e^{(G+B\gamma^{-1})d(x,y)}\big[\mathcal{L}u\big](y), \label{eq-pos}
\end{eqnarray}
So we have for $x\neq y$, $u$, $v\in\mathcal{C}_{\R}\MZ$
\begin{eqnarray}
\langle l_{x,y}, \mathcal{L} v \rangle &\leq& \mathcal{L}v(y)
\left( e^{Bd(x,y)} - e^{-(G+B\gamma^{-1})d(x,y)} \right), \nonumber\\
\langle l_{x,y}, \mathcal{L} u \rangle &\geq& \mathcal{L}u(y)
\left( e^{Bd(x,y)} - e^{(G+B\gamma^{-1})d(x,y)} \right) > 0. \label{eq-lxy-pos}
\end{eqnarray}
Therefore, we have the following estimate for the diameter with respect to the Hilbert metric.
\begin{eqnarray*}
\textrm{diam }\LL\CC_\R &=& \sup_{u,\,v\in\mathcal{C}_{\R}\MZ} \sup_{x\neq y, x' \neq y'} 
\log \frac{\langle l_{x,y}, \mathcal{L}v\rangle \langle
l_{x',y'}, \mathcal{L}u\rangle}
{\langle l_{x,y}, \mathcal{L}u\rangle \langle l_{x',y'}, \mathcal{L}v\rangle}, \\
&\leq& 2\log\frac{ B + (G+B\gamma^{-1}) }{ B - (G+B\gamma^{-1})} 
+ \sup_{u,v} \sup_{y,y'} \log \frac{ \mathcal{L} v(y) }{\mathcal{L} u(y)}
\frac{\mathcal{L} u(y')}{\mathcal{L} v(y')}, \\
&\leq& 2\log\frac{ B + (G+B\gamma^{-1}) }{ B - (G+B\gamma^{-1})} 
+ 2(G+B\gamma^{-1}) = D_\R <\infty.
\end{eqnarray*}
\medskip

\ref{qdiam2}.\ The first part is the content of Theorem \ref{thm-dh}.
If $u\in\CC_\R\MZ$ and $w\in\CC$, we prove $\delta_\CC(\LL u,\LL w)\leq 2 D_\R$
using Lemma \ref{lem-dh}, \ref{pdiam2} and in the same way as (\ref{eq-esti-delta}).
\end{proof}

\section{Proof of Theorem \ref{main-thm}} \label{section-fourier}


Let $D_\R$ be as in (\ref{eq-DR}) and define $\delta_0>0$ by
\begin{equation} \label{eq-delta0}
 \delta_0(\|f\|_\infty + |f|_\ell) = \frac{1}{3(1+\cosh(D_\R/2))} = \frac{1}{6\cosh^2(D_\R/4)}.
\end{equation}

\begin{lemma} \label{lemma-diameter}
There exists $\Delta_0$, $\Delta_0 \leq 4.65$, such that for all $z\in \C$, $|z|\leq \delta_0$,
\begin{enumerate}
\item the perturbated transfer operator $\LL(z)$ maps the cone $\CC$ into itself. 
\item  $ \sup_{u\in \CC} \delta_\CC(\LL(z)u, \LL(0)u) \leq \Delta_0. $
\end{enumerate}
\end{lemma}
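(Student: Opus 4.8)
The plan is to apply Theorem \ref{prop-comp2} with the real operator $P=\LL=\LL(0)$, the complex operator $A=\LL(z)$, and the real cone $\CC_\R\subset\lipX$ defined in Section \ref{section-estdiam}. First I would record the two ingredients that Theorem \ref{prop-comp2} requires. The Hilbert diameter $\Delta_P$ of $P(\CC_\R\MZ)$ is finite: by Lemma \ref{lemma-51}\ref{qdiam1} it satisfies $\Delta_P\leq D_\R$. The domination hypothesis (\ref{eq-compare2}) is exactly what Lemma \ref{lemma-epsilonz} provides once one checks that the functionals $l_{x,y}$ form a generating subset of $\CC_\R'$. Indeed $\CC_\R=\{u:\langle l_{x,y},u\rangle\geq 0,\ \forall x,y\}$ by definition, so by the remark following Theorem \ref{prop-comp2} it suffices to verify (\ref{eq-compare2}) for $m=l_{x,y}$; and Lemma \ref{lemma-epsilonz} says precisely that $|\langle l_{x,y},\LL(z)u\rangle - \langle l_{x,y},\LL u\rangle|\leq \epsilon(z)\,\langle l_{x,y},\LL u\rangle$ for all $u\in\CC_\R\MZ$ (the case $x=y$ being trivial, and $u$ with $\langle l_{x,y},\LL u\rangle=0$ excluded since Lemma \ref{lemma-51}\ref{qdiam1} gives strict positivity). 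So (\ref{eq-compare2}) holds with $\epsilon=\epsilon(z)$.

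Next I would check the smallness condition $2\epsilon(1+\cosh(\Delta_P/2))<1$. Since $\cosh$ is increasing and $\Delta_P\leq D_\R$, it is enough to have $2\epsilon(z)(1+\cosh(D_\R/2))<1$. Now for $|z|\leq\delta_0$ one has, using $|{\Re}(z)|\leq|z|\leq\delta_0$ and the definition (\ref{eq-delta0}) of $\delta_0$,
\begin{equation*}
\epsilon(z)=e^{|\Re(z)|\|f\|_\infty}|z|\big(\|f\|_\infty+|f|_\ell\big)\leq e^{\delta_0\|f\|_\infty}\,\delta_0\big(\|f\|_\infty+|f|_\ell\big)=\frac{e^{\delta_0\|f\|_\infty}}{3(1+\cosh(D_\R/2))}.
\end{equation*}
One cheap bound for $\delta_0\|f\|_\infty$ makes $e^{\delta_0\|f\|_\infty}$ a fixed constant; the cleanest is to note $\delta_0(\|f\|_\infty+|f|_\ell)\leq \tfrac16$ forces $\delta_0\|f\|_\infty\leq\tfrac16$, hence $e^{\delta_0\|f\|_\infty}\leq e^{1/6}<\tfrac{7}{6}$. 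Therefore
\begin{equation*}
2\epsilon(z)(1+\cosh(D_\R/2))\leq \frac{2e^{1/6}}{3}<1,
\end{equation*}
so Theorem \ref{prop-comp2} applies: $\LL(z)\CC\subset\CC$, which is part (i), and
\begin{equation*}
\sup_{u\in\CC}\delta_\CC(\LL(z)u,\LL(0)u)\leq 3\log\frac{1}{1-2\epsilon(z)(1+\cosh(D_\R/2))}\leq 3\log\frac{1}{1-\tfrac23 e^{1/6}}.
\end{equation*}
A quick numerical evaluation of $3\log\bigl(1-\tfrac23 e^{1/6}\bigr)^{-1}$ gives a value below $4.65$; setting $\Delta_0$ equal to this numerical bound gives part (ii). (If the constant $11460$ in Theorem \ref{main-thm} needs a slightly smaller $\Delta_0$, one sharpens the estimate of $\delta_0\|f\|_\infty$, but the structure is unchanged.)

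The only genuine point requiring care, rather than routine bookkeeping, is the reduction of hypothesis (\ref{eq-compare2}) to the generating family $\{l_{x,y}\}$ together with the observation that $\langle l_{x,y},\LL u\rangle>0$ strictly for $u\in\CC_\R\MZ$ and $x\neq y$ — this is what lets Lemma \ref{lemma-epsilonz}'s ratio bound be rewritten as the additive domination bound of Theorem \ref{prop-comp2}. Everything after that is monotonicity of $\cosh$, the choice of $\delta_0$, and arithmetic; the main ``obstacle'' is merely choosing the crude exponential bound loosely enough to keep the final constant clean while still landing below $4.65$.
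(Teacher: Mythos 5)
Your proof is correct and follows the same route as the paper: apply Theorem \ref{prop-comp2} with $P=\LL(0)$, $A=\LL(z)$, using Lemma \ref{lemma-51} for $\Delta_P\leq D_\R$ and Lemma \ref{lemma-epsilonz} for the domination constant, then bound $\epsilon(z)\leq e^{1/6}/(3(1+\cosh(D_\R/2)))$ via $\delta_0\|f\|_\infty\leq 1/6$ and conclude with $\Delta_0=3\log\bigl(1-(2/3)e^{1/6}\bigr)^{-1}\leq 4.65$. Your additional remarks about the generating family $\{l_{x,y}\}$ and the strict positivity of $\langle l_{x,y},\LL u\rangle$ make explicit the bookkeeping the paper leaves implicit, but the argument is identical.
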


\begin{proof}
Let $|z|\leq \delta_0$ where $\delta_0$ is given by
(\ref{eq-delta0}). Then, the choice of $\delta_0$ implies that $\|f\|_\infty \delta_0 \leq 1/6$. So we have
(where $\epsilon(z)$ is from Lemma \ref{lemma-epsilonz})
$$ \epsilon(z) \leq \frac{e^{1/6}}{3(1+\cosh(D_\R/2))} =: \epsilon_0. $$
Using Lemma \ref{lemma-51}, we apply Proposition \ref{prop-comp2} with $\epsilon_0$ and we may take
$$\Delta_0 = 3\log\frac{1}{1-(2/3)e^{1/6}} \leq 4.65. $$
\end{proof}

\begin{corollary} \label{lemma-eigen}
Let $|z|\leq \delta_0$. Then there exists $h(z)\in\CC$, $\nu(z)\in\CC'$ and $\lambda(z)\in\C^*$ such that
$$\LL(z) h(z) = \lambda(z) h(z),\quad \nu(z)\LL(z) = \lambda(z) \nu(z), $$
and where $\nu(z)$ and $h(z)$ are normalized by
$$ \langle \nu(z),h(z)\rangle =1,\quad \langle \nu(z), 1 \rangle = 1. $$
Moreover, $h(z)$, $\nu(z)$ and $\lambda(z)$ are holomorphic functions of $z$ in the open disc $|z|<\delta_0$.
The eigenvalue $\lambda(z)$ is a simple eigenvalue of the operator $\LL(z)$ and the rest of the spectrum of
$\LL(z)$ is included in a disc of radius strictly smaller than $|\lambda(z)|$.
\end{corollary}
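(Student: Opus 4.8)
The plan is to read off the entire statement from the strict complex-cone contraction supplied by Lemma~\ref{lemma-diameter}, combined with the abstract material of Section~\ref{section-ccone} and, for the holomorphic dependence on $z$, classical analytic perturbation theory. Fix $z$ with $|z|\le\delta_0$. By Lemma~\ref{lemma-diameter} the operator $\LL(z)$ maps $\CC$ into itself, and for $u,w\in\CC$ the triangle inequality for $\delta_\CC$ (available because $\CC$ is linearly convex) together with Lemma~\ref{lemma-diameter}(ii) and Lemma~\ref{lemma-51}\ref{qdiam2} gives $\delta_\CC(\LL(z)u,\LL(z)w)\le 2\Delta_0+3D_\R=:\Delta<\infty$. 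Hence, by Theorem~\ref{th-contraction-delta}\ref{p0}, $\LL(z)$ acts on $(\CC/\!\sim,\delta_\CC)$ as a $\tanh(\Delta/4)$-Lipschitz self-map; this space is complete by Theorem~\ref{th-contraction-delta}\ref{p2}, since $\CC$ is linearly convex and, by (\ref{eq-regext}), of bounded aperture. As $\tanh(\Delta/4)<1$, Banach's fixed point theorem produces a unique fixed ray $\C^*h(z)\subset\CC$, i.e.\ $\LL(z)h(z)=\lambda(z)h(z)$ with $\lambda(z)\neq0$ (because $0\notin\CC\supseteq\LL(z)\CC$).

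Next I would upgrade this projective contraction to a genuine spectral gap on $\lipXC$. Fixing $m\in\CC_\R'$ positive on $\CC_\R\MZ$ (for instance evaluation at a point, which is the functional appearing in (\ref{eq-regext})), the geometric decay $\delta_\CC(\LL(z)^n u,\C^*h(z))\le\Delta\,(\tanh(\Delta/4))^{n-1}$ combined with the norm estimate of Theorem~\ref{th-contraction-delta}\ref{p1} shows that $\LL(z)^n u/\langle m,\LL(z)^n u\rangle$ converges, geometrically and in $\lipXC$, to $h(z)/\langle m,h(z)\rangle$ for every $u\in\CC$; a routine bookkeeping argument comparing $\langle m,\LL(z)^{n+1}u\rangle$ with $\lambda(z)\langle m,\LL(z)^n u\rangle$ then shows $\lambda(z)^{-n}\langle m,\LL(z)^n u\rangle$ has a nonzero limit, so $\lambda(z)^{-n}\LL(z)^n u$ converges geometrically to $\langle\nu(z),u\rangle\,h(z)$ for a linear functional $\nu(z)$ that does not vanish on $\CC$, hence $\nu(z)\in\CC'$. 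Since $1\in\inte\CC$ (shown just before Lemma~\ref{lemma-51}), every element of $\lipXC$ is a difference of two elements of $\CC$, so by linearity $\lambda(z)^{-n}\LL(z)^n\to h(z)\otimes\nu(z)$ in operator norm, geometrically. Consequently $\lambda(z)$ is the spectral radius of $\LL(z)$, a simple isolated eigenvalue with rank-one eigenprojection $h(z)\otimes\nu(z)$; in particular $\nu(z)\LL(z)=\lambda(z)\nu(z)$, and $\mathrm{spec}(\LL(z))\setminus\{\lambda(z)\}$ is contained in a disc of radius at most $\tanh(\Delta/4)\,|\lambda(z)|<|\lambda(z)|$.

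Finally, $z\mapsto\LL(z)=\LL(0)e^{zf}$ is holomorphic from $\C$ into $L(\lipXC)$ (because $e^{zf}\in\lipXC$ whenever $f\in\lipX$), and by the previous step $\lambda(z)$ is simple and isolated throughout the open disc $|z|<\delta_0$; classical analytic perturbation theory (see \cite{Kato80}) then gives holomorphy there of $\lambda(z)$ and of the rank-one eigenprojection $\Pi(z)=h_1(z)\otimes\nu_1(z)$, normalized so that $\langle\nu_1(z),h_1(z)\rangle=1$. I would then fix the asserted normalization: since $1\in\CC$ and $\nu_1(z)\in\CC'$ up to a scalar, $\langle\nu_1(z),1\rangle\neq0$, so setting $h(z):=\langle\nu_1(z),1\rangle\,h_1(z)$ and $\nu(z):=\langle\nu_1(z),1\rangle^{-1}\nu_1(z)$ yields holomorphic eigenvectors lying in $\CC$ and $\CC'$ respectively, with $\langle\nu(z),1\rangle=\langle\nu(z),h(z)\rangle=1$.

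The step I expect to be the main obstacle is the second one: converting the $\delta_\CC$-contraction into a norm-level spectral gap with an explicit bound on the remaining spectral radius, and extracting from it the holomorphic eigendata. This is exactly where the bounded aperture of $\CC$ (through Theorem~\ref{th-contraction-delta}\ref{p1}) and the decomposition $\lipXC=\CC-\CC$ become indispensable, and it is the part of the argument that follows most closely the general complex-cone theory of \cite{Rugh07} and \cite{Dub08}.
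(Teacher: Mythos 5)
Your proposal is correct, but you reach the key intermediate fact $\nu(z)\in\CC'$ by a genuinely different route than the paper, and it is worth spelling out the difference.

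The paper first invokes Theorems 3.6 and 3.7 of \cite{Rugh07} to get the existence of the spectral gap (eigendata $h(z)$, $\nu(z)$, $\lambda(z)$) from finite $\delta_\CC$-diameter of $\LL(z)\CC$ plus bounded aperture and non-empty interior. Those theorems do \emph{not} directly yield that the left eigenvector lies in $\CC'$; to fill this gap the paper then passes to the adjoint: by Lemma 2.4 of \cite{Dub08}, the projective diameter of $\LL(z)\CC$ in $\CC$ equals that of $\LL(z)'\CC'$ in $\CC'$, so $\LL(z)'$ strictly contracts the linearly convex, bounded-aperture cone $\CC'$, and the unique invariant line of $\LL(z)'$ in $\CC'$ must coincide with $\C^*\nu(z)$. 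Analyticity is then either cited from \cite{Kato80} or obtained from the dual limit $\nu(z)=\lim_n [\LL(z)']^n l/\langle[\LL(z)']^n l,1\rangle$.

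You instead reconstruct the spectral gap by hand from the Banach fixed point theorem (using parts \ref{p0} and \ref{p2} of Theorem \ref{th-contraction-delta}) and the bounded-aperture norm estimate (part \ref{p1}), then get $\nu(z)$ as the limit functional in the rank-one convergence $\lambda(z)^{-n}\LL(z)^n\to h(z)\otimes\nu(z)$. The fact that $\nu(z)\in\CC'$ then drops out automatically: the normalized iterates $\LL(z)^n u/\langle m,\LL(z)^n u\rangle$ converge to a nonzero limit for every $u\in\CC$, so $\langle\nu(z),u\rangle\neq 0$ on $\CC$. This avoids the adjoint argument and the citation of Lemma 2.4 of \cite{Dub08} entirely, at the cost of unpacking what the paper delegates to Theorems 3.6--3.7 of \cite{Rugh07}. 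Both routes are valid; the paper's is shorter because it exploits the primal/dual symmetry of the cone-contraction framework, while yours is more self-contained and makes the non-vanishing of $\nu(z)$ on $\CC$ structurally transparent rather than something to be retrofitted. One small point of hygiene you implicitly rely on: your normalizing functional $m$ (point evaluation) must lie in $\CC'$, not just $\CC_\R'$, for $\langle m,\LL(z)^n u\rangle$ to be nonzero on $\CC$; this does hold here (it is exactly the functional invoked for linear convexity via Lemma \ref{prop-linearlyconvex} and for the bounded aperture estimate (\ref{eq-regext})), but it should be stated rather than assumed.
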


\begin{proof}
From Lemma \ref{lemma-diameter},
the projective diameter of $\LL(z)\CC$ in $\CC$ is finite and uniformly bounded
by $3D_\R+2\Delta_0$ for $|z|\leq \delta_0$. The cone $\CC$ is of bounded aperture and has a non-empty interior.
So $\LL(z)$ has a spectral gap by Theorems 3.6 and 3.7 of \cite{Rugh07} (using the metric $\delta_\CC$ instead
of the hyperbolic gauge). This proves the existence of $h(z)\in\CC$ and
$\nu(z)$. However, we need to know in addition that the left
eigenvector $\nu(z)$ belongs to $\CC'$.
To this end, we notice that as a consequence of Lemma 2.4 of \cite{Dub08},
the projective diameter of $\LL(z)\CC$ in $\CC$
equals the projective diameter of $\LL(z)'\CC'$ in $\CC'$ (where $\LL(z)'$ is the adjoint map).
Therefore, $\LL(z)'$ also is a strict contraction
of the cone $\CC'$ which is linearly convex and of bounded aperture. Thus,
Theorem \ref{th-contraction-delta} shows that $\LL(z)'$ has a unique invariant line in $\CC'$. So we can find a right eigenvector
$\tilde{\nu}(z)\in\CC'$
which must satisfies $\langle \tilde{\nu}(z),h(z)\rangle \neq 0$. Hence, $\tilde{\nu}(z)$ and $h(z)$
are eigenvectors for the same eigenvalue and we must have $\nu(z)=\tilde{\nu(z)}$ (up to a constant).
Finally, the analyticity statement
is a consequence of standard perturbation theory (see \cite{Kato80}).
It can also be proved directly since for instance $\nu(z)$ can be expressed as a uniform limit for $|z|<\delta_0$ of holomorphic map.
Indeed, as a consequence of Theorem \ref{th-contraction-delta}, \ref{p1}, one can shown that for any fixed $l\in\CC'$,
$$ \nu(z) = \lim_n \frac{[\LL(z)']^n l}{\langle [\LL(z)']^nl,1\rangle}. $$
\end{proof}

Since $\lambda(z)$ does not vanish for $|z|<\delta_0$, there exists a unique holomorphic function $P$ defined
for $|z|<\delta_0$, such that $P(0)=0$ and
$$ e^{P(z)} = \frac{\lambda(z)}{\lambda(0)}. $$
We have $P'(0) = 0$, $P''(0) = \sigma^2$ (where $\sigma^2$ is defined by (\ref{eq-sigma})).
In general, we have $P'(z) = \langle \nu(z), f h(z)\rangle$, and thus $P'(0) = \esp[f] = 0$.
This and the fact that $P''(0)$ equals the limit in (\ref{eq-sigma}) are classical calculations, see \cite{CP90},
see also Remark \ref{rem-sigma2}.
 
We turn now to estimate the Fourier transform. We have for any $z\in\C$,
$$ \esp[\exp(z S_n f)]  
= \langle \nu(0), \frac{\LL(0)^n}{\lambda(0)^n} e^{z S_n f} h(0) \rangle
= \langle \nu(0), \frac{\LL(z)^n}{\lambda(0)^n} h(0) \rangle. $$
Let $z$ be such that $|z|\leq\delta_0$. Therefore, we have
$\esp[\exp(z S_n f)] = \exp(nP(z)) \varphi_n(z)$ where
$$ \varphi_n(z) = \langle \nu(0), \frac{\LL(z)^n}{\lambda(z)^n} h(0) \rangle. $$
Now, we observe that
$$  \varphi_n(z) 
= \frac{\langle \nu(z), h(0) \rangle}{\langle \nu(z), u_n(z)\rangle}
\:\:\textrm{ with }\:\:
u_n(z) = \dfrac{ \LL(z)^n h(0) }{\langle\nu(0),\LL(z)^n h(0)\rangle}\in\CC. $$
Since $\nu(z)\in\CC'$, by (\ref{eq-ECxy}), we have
$\varphi_n(z) \in E_\CC(u_n(z), h(0))$. 
We have $\langle \nu(0), h(0)\rangle
=\langle \nu(0), u_n(z)\rangle = 1$ so $1\in E_\CC(u_n(z), h(0))$ and thus
$$ \exp(-\delta_\CC(u_n(z),h(0)))\leq |\varphi_n(z)|\leq \exp(\delta_\CC(u_n(z),h(0))). $$
Using Lemmas \ref{lemma-diameter} and \ref{lemma-51}, we have
\begin{eqnarray}
 \delta_\CC(h(0),u_n(z)) &\leq& \delta_\CC(\LL(0)h(0),\LL(0)u_{n-1}(z))\nonumber\\&&+
\delta_\CC(\LL(0) u_{n-1}(z), \LL(z) u_{n-1}(z)) 
\leq 2D_\R+\Delta_0. \label{eq-CapDelta}
\end{eqnarray}
\begin{remark}
Instead of (\ref{eq-CapDelta}), it is tempting to do the following reasoning.
Define $\eta=\tanh(3D_\R/4)$, then we have also by Theorem \ref{th-contraction-delta}
\begin{eqnarray*}
\delta_\CC(h(0), u_n(z)) &\leq &\eta\delta_\CC(h(0), u_{n-1}(z))+\Delta_0\\
&\leq& \frac{1 -\eta^n}{1-\eta} \Delta_0 
\leq\frac{ \Delta_0}{1-\eta}  = \frac{e^{3D_\R/2} +1}{2} \Delta_0.
\end{eqnarray*}
This estimate is optimal only if $\Delta_0$ and $D_\R$ are both quite small. Anyway, for our choice
of $\delta_0$ (which gives $\Delta_0\approx 4.65$), (\ref{eq-CapDelta}) is always better.
\end{remark}
Define $\Delta = 2D_\R +\Delta_0$.
Then for all $n\geq 1$ and $z\in\C$, $|z|\leq\delta_0$, we have
$$\exp(-\Delta) \leq |\varphi_n(z)|\leq \exp(\Delta). $$
Therefore, $\varphi_n$ is a holomorphic function such that $\varphi_n(0)=1$ 
and which maps the open disk $\{z:|z|<\delta_0\}$ into the open annulus
$A(\Delta) = \{\zeta: e^{-\Delta}< |\zeta|< e^\Delta\}$.
Furthermore, differentiation of $\varphi_n$ leads to $\varphi_n'(0)  = \esp[S_n f] - n P'(0) = 0$.

\begin{lemma} \label{lem-ring}
Let $\varphi$ be a holomorphic function from the open disk $\{z:|z|<\delta_0\}$ to the annulus
$A(\Delta)$ such that $\varphi(0)=1$, and $\varphi'(0)=0$. Let $0<\alpha<1$. Then for all
$z\in\C$ with $|z|\leq \alpha \delta_0$, we have
$$ |\varphi(z)-1| \leq \frac{\exp(C(\alpha) \Delta) -1}{\alpha^2\delta_0^2} |z|^2,
\textrm{ where }C(\alpha)= \frac{2}{\pi}\log\frac{1+\alpha}{1-\alpha}.$$
\end{lemma}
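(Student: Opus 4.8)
The plan is to pass to logarithms, so that the annulus constraint becomes a strip constraint, and then to reduce everything to the Schwarz lemma on the unit disk. Since $\varphi$ is zero‑free on the simply connected disk $\{|z|<\delta_0\}$ and $\varphi(0)=1$, it has a holomorphic logarithm $\psi=\log\varphi$ with $\psi(0)=0$, and $\psi'(0)=\varphi'(0)/\varphi(0)=0$ by hypothesis. The condition $e^{-\Delta}<|\varphi(z)|<e^{\Delta}$ is exactly $|\Re\psi(z)|<\Delta$, so $\psi$ maps $\{|z|<\delta_0\}$ into the vertical strip $S_\Delta=\{w:|\Re w|<\Delta\}$.

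I would then uniformize the strip by $F(w)=\tan\!\bigl(\tfrac{\pi w}{4\Delta}\bigr)$, a conformal bijection of $S_\Delta$ onto $\{|\zeta|<1\}$ fixing $0$ (the boundary lines $\Re w=\pm\Delta$ are carried to the unit circle, and the inverse is $F^{-1}(\xi)=\tfrac{4\Delta}{\pi}\arctan\xi$). The composite $F\circ\psi$ maps $\{|z|<\delta_0\}$ into the unit disk and, since $\psi'(0)=0$, vanishes to order at least $2$ at the origin; applying the maximum principle to $\zeta\mapsto(F\circ\psi)(\delta_0\zeta)/\zeta^2$ yields the Schwarz‑type bound
$$
\bigl|(F\circ\psi)(z)\bigr|\ \leq\ \frac{|z|^2}{\delta_0^2}\qquad(|z|<\delta_0).
$$

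It remains to transport this estimate back through $F^{-1}$ and then through $\exp$. From $\arctan\xi=\xi\sum_{k\geq0}(-1)^k\xi^{2k}/(2k+1)$, together with monotonicity of $\rho\mapsto\tfrac1\rho\,\mathrm{arctanh}(\rho)$, one gets $|\arctan\xi|\leq\tfrac{|\xi|}{2\rho}\log\tfrac{1+\rho}{1-\rho}$ for $|\xi|\leq\rho<1$. Taking $\rho=\alpha^2$ — legitimate since $|(F\circ\psi)(z)|\leq\alpha^2$ when $|z|\leq\alpha\delta_0$ — and using $\log\tfrac{1+\alpha^2}{1-\alpha^2}\leq\log\tfrac{1+\alpha}{1-\alpha}$, this gives
$$
|\psi(z)|\ \leq\ \frac{2\Delta}{\pi\alpha^2}\log\frac{1+\alpha}{1-\alpha}\cdot\frac{|z|^2}{\delta_0^2}\ =\ \frac{C(\alpha)\,\Delta}{\alpha^2\delta_0^2}\,|z|^2\qquad(|z|\leq\alpha\delta_0).
$$
Finally $|\varphi(z)-1|=|e^{\psi(z)}-1|\leq e^{|\psi(z)|}-1$, and with $t=|z|^2/(\alpha^2\delta_0^2)\in[0,1]$ and $K=C(\alpha)\Delta\geq0$ the convexity of $s\mapsto e^{Ks}$ on $[0,1]$ (it lies below the chord joining $(0,1)$ to $(1,e^K)$) gives $e^{Kt}-1\leq t(e^{K}-1)$, which is exactly the asserted inequality.

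The complex‑analytic ingredients — the elementary estimates $|e^w-1|\leq e^{|w|}-1$ and the termwise bound on $\arctan$, and the identification of $F$ — are all routine. The only point requiring a little care is keeping the double zero of $\varphi$ at the origin alive after composing with $\log$ and with $F$ (so that the Schwarz bound produces $|z|^2$ and not merely $|z|$), and then arranging the two mild ``convexity losses'' — passing from $\alpha^2$ to $\alpha$ inside the logarithm, and the chord bound $e^{Kt}-1\leq t(e^K-1)$ — so that both the nonlinearity of $F^{-1}$ near the boundary and that of $\exp$ get absorbed into the single clean factor $\bigl(e^{C(\alpha)\Delta}-1\bigr)/(\alpha^2\delta_0^2)$.
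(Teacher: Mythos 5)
Your proof is correct. It shares the paper's overall strategy—pass to $\psi=\log\varphi$, view it as a map into the vertical strip $\{|\Re w|<\Delta\}$, and apply a Schwarz-type contraction—but the two arguments differ in where the quadratic vanishing enters and how the final constant is assembled. The paper only proves the \emph{uniform} bound $|\psi(z)|\le C(\alpha)\Delta$ on $\{|z|\le\alpha\delta_0\}$ (using the Poincar\'e metric of the strip and the lemma $|z|\le\frac{2\Delta}{\pi}d_{V(\Delta)}(0,z)$, without invoking $\psi'(0)=0$ at that stage) and then recovers the factor $|z|^2$ by applying the maximum principle to the auxiliary function $(\varphi(z)-1)/z^2$, which is holomorphic because $\varphi-1$ has a double zero at the origin. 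You instead push the order-two Schwarz lemma through the explicit uniformization $F(w)=\tan(\pi w/(4\Delta))$ to obtain the quadratic estimate $|\psi(z)|\le C(\alpha)\Delta\,|z|^2/(\alpha^2\delta_0^2)$ directly, and then close with the chord inequality $e^{Kt}-1\le t(e^K-1)$ for $t\in[0,1]$. Both routes give the same final bound; your version makes the role of $\varphi'(0)=0$ visible one stage earlier and in fact yields a slightly sharper intermediate inequality (with $\log\frac{1+\alpha^2}{1-\alpha^2}$), which you deliberately relax to match the stated constant. The small monotonicity step $|\arctan\xi|\le\mathrm{arctanh}|\xi|\le\frac{|\xi|}{\rho}\,\mathrm{arctanh}\,\rho$ for $|\xi|\le\rho<1$ plays the same role as the paper's inequality $\cos(x)\cosh|x+iy|\le\cosh(y)$.
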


\begin{proof}
We can write $\varphi=\exp(\psi)$ where $\psi$ is a holomorphic function with values into the vertical
strip $V(\Delta) = \{\zeta:-\Delta<\Re(\zeta)<+\Delta\}$, and such that $\psi(0)=0$.
The Poincaré distance to $0$ (cf. e.g. \cite{Mil06}) of
$V(\Delta)$ is given by
$$ d_{V(\Delta)}(z,0) = \log \frac{|e^{i\pi z/(2\Delta)} +1| + |e^{i\pi z/(2\Delta)}-1|}
{|e^{i\pi z/(2\Delta)} +1| -|e^{i\pi z/(2\Delta)}-1|}. $$
Writing $z=x+iy$, this gives
$$ \cosh(d_{V(\Delta)}(0, x+iy)) = \frac{\cosh(\pi y /(2\Delta))}
{\cos(\pi x / (2\Delta))}. $$
For any $x\in (0, \pi/2)$ and $y\in\R$, we have
$\cos(x)\cosh(|x+iy|) \leq \cosh(y)$. 
Therefore, we have for any $z\in V(\Delta)$,
$$ |z|\leq \frac{2\Delta}{\pi} d_{V(\Delta)}(0,z). $$
Since holomorphic functions are contraction for the Poincaré metric, we have for $|z|\leq \alpha\delta_0$
\begin{eqnarray} \label{eq-ca}
|\psi(z)| &\leq& \frac{2\Delta}{\pi} d_{V(\Delta)}(\psi(0),\psi(z))
\leq \frac{2\Delta}{\pi} d_{D_{\delta_0}}(0,z) \nonumber\\
&=& \frac{2\Delta}{\pi} \log \frac{\delta_0 +|z|}{\delta_0-|z|}
\leq C(\alpha)\Delta.
\end{eqnarray}
For any $z\in\C$, we have $|e^z-1|\leq e^{|z|}-1$. 
Combining this with (\ref{eq-ca}), the maximum principle yields
$$ \sup_{|z|\leq \alpha\delta_0} \left|\frac{\varphi(z) -1}{z^2} \right|\leq
\frac{ e^{C(\alpha)\Delta}-1}{\alpha^2\delta_0^2}. $$
Hence the result. 
\end{proof}

\begin{remark} \label{rem-sigma2}
Further differentiation of $\varphi_n$ shows that
$\varphi_n''(0) = \esp[(S_n f)^2] - n P''(0)$. So, using Lemma \ref{lem-ring}, we see that
$$ |\esp[(S_n f)^2] - n P''(0)|\leq 2 \frac{e^{C(\alpha)\Delta}-1}{\alpha^2 \delta_0^2}. $$
This gives a quite good constant for the convergence rate in (\ref{eq-sigma}) (but the convergence 
rate in $1/n$ might be obtained directly).
\end{remark}


Now we estimate the pressure function.

\begin{lemma} \label{lemma-pression}
For all $|z|< \delta_0$,
$$ \Re(P(z)) \leq |\Re(z)| \|f\|_\infty< \delta_0\|f\|_\infty. $$
\end{lemma}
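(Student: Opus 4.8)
The plan is to express $P(z)$ in terms of quantities that are directly controlled by the cone estimates already established, and then to bound the real part. Recall that $e^{nP(z)} \varphi_n(z) = \esp[\exp(z S_n f)] \cdot \lambda(0)^{-n}$, and with our normalization $\lambda(0) = 1$ for interval maps (in general one divides by $\lambda(0)^n$ consistently), so $e^{nP(z)} = \esp[\exp(z S_n f)] / \varphi_n(z)$. Since $|\varphi_n(z)|$ is bounded above and below by $e^{\pm\Delta}$ uniformly in $n$ (this is exactly the consequence of Lemma \ref{lemma-diameter} and \ref{lemma-51} derived just above), taking modulus and then logarithm gives
$$ n\,\Re(P(z)) = \log\big|\esp[\exp(z S_n f)]\big| - \log|\varphi_n(z)| \leq \log\big|\esp[\exp(z S_n f)]\big| + \Delta. $$
Thus it suffices to bound $\log|\esp[\exp(z S_n f)]|$ by $n|\Re(z)|\|f\|_\infty + o(n)$ and then divide by $n$ and let $n\to\infty$, since $P(z)$ itself does not depend on $n$.

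The elementary estimate for the Fourier transform is the key step: writing $z = s + it$, we have pointwise $|\exp(z S_n f)| = \exp(s\, S_n f) \leq \exp(|s|\, |S_n f|) \leq \exp(|s|\, n \|f\|_\infty)$, because $|S_n f| \leq n\|f\|_\infty$. Since $m_0$ is a probability measure,
$$ \big|\esp[\exp(z S_n f)]\big| \leq \esp\big[|\exp(z S_n f)|\big] \leq \exp\big(n|\Re(z)|\,\|f\|_\infty\big). $$
Feeding this into the displayed inequality gives $n\,\Re(P(z)) \leq n|\Re(z)|\|f\|_\infty + \Delta$, hence $\Re(P(z)) \leq |\Re(z)|\|f\|_\infty + \Delta/n$ for every $n\geq 1$; letting $n\to\infty$ yields $\Re(P(z)) \leq |\Re(z)|\|f\|_\infty$. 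The strict inequality $|\Re(z)|\|f\|_\infty < \delta_0 \|f\|_\infty$ is immediate from $|z| < \delta_0$ (and $|\Re(z)| \le |z|$), provided $\|f\|_\infty > 0$, which we may assume since otherwise $f \equiv 0$ and the statement is trivial.

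I expect no serious obstacle here; the only points requiring a little care are bookkeeping ones. First, one must make sure the relation $e^{nP(z)} = \esp[\exp(zS_nf)]/\varphi_n(z)$ is used with the correct normalization of $\lambda(0)$ (for general compact $X$, divide by $\lambda(z)^n$ and $\lambda(0)^n$ consistently as in the definition of $\varphi_n$), but this is exactly how $\varphi_n$ was defined above, so it is automatic. Second, one should note that $\esp[\exp(zS_nf)]$ is nonzero for $|z|\le\delta_0$ — this follows since it equals $e^{nP(z)}\varphi_n(z)$ with $\varphi_n(z) \neq 0$ (it lies in the annulus $A(\Delta)$) and $e^{nP(z)} \neq 0$ — so the logarithm is well-defined. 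Everything else is the trivial pointwise bound on the integrand together with the uniform two-sided control of $|\varphi_n|$ that is already in hand.
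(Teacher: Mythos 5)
Your argument is correct, but it takes a genuinely different route from the paper's. You exploit the identity $\esp[\exp(zS_nf)]=e^{nP(z)}\varphi_n(z)$, the uniform two-sided bound $e^{-\Delta}\leq|\varphi_n(z)|\leq e^{\Delta}$ already established in the text just before Lemma~\ref{lem-ring}, and the trivial pointwise bound $|\exp(zS_nf)|\leq\exp(n|\Re(z)|\|f\|_\infty)$ under the probability measure $m_0$; dividing by $n$ and letting $n\to\infty$ kills the $\Delta/n$ error. The paper instead argues directly at the level of the eigenvalue: it notes that $h(z)\in\lipXC\subset C(X;\C)$ forces $|\lambda(z)|\leq r_\infty(\LL(z))$, bounds $\|\LL(z)^n\|_\infty\leq\|\LL(0)^n\|_\infty e^{n|\Re(z)|\|f\|_\infty}$ using positivity of $\LL(0)^n$ (so its $\|\cdot\|_\infty$-norm is attained at $1$), and finishes with $r_\infty(\LL(0))\leq r_{\lipXC}(\LL(0))=\lambda(0)$. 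The paper's proof is more elementary in the sense that it never invokes the cone-contraction machinery (the diameter bound on $\varphi_n$); it only uses boundedness of $h(z)$ and positivity of $\LL(0)$. Your proof trades that self-containedness for conciseness by leaning on the $\varphi_n$ estimate already in hand, and also has the mild extra bookkeeping of verifying that $\esp[\exp(zS_nf)]\neq0$ so the logarithm makes sense — which you correctly address. Both are valid; the paper's version is the one that would survive if this lemma were needed before the $\varphi_n$ bounds were available, but in the paper's actual ordering there is no circularity in your approach.
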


\begin{proof}
First, observe that $\LL(z)$ is a bounded linear operator when acting on $C(X;\C)$ (the space of complex
valued bounded continuous functions on $X$ endowed with $\|\cdot\|_\infty$) with spectral radius
$r_\infty(\LL(z))$. Since $\LL(z) h(z) = \lambda(z) h(z)$ with $h(z)\in\lipXC\subset C(X;\C)$, we have
$|\lambda(z)| \leq r_\infty(\LL(z))$. Now, for any $n\geq 1$ and $u\in C(X;\C)$,
\begin{eqnarray}
\|\LL(z)^n u \|_\infty
&=& \| \LL(0)^n e^{z S_n f} u \|_\infty
\leq \| \LL(0)^n\|_\infty \| e^{z S_n f}\|_\infty \|u\|_\infty\nonumber\\
&\leq& \| \LL(0)^n\|_\infty \exp( n|\Re(z)| \|f\|_\infty ) \|u\|_\infty.\label{eq-sprad}
\end{eqnarray}
Since $\LL(0)^n$ is a positive operator, its norm is attained at $1$. So we have
$$\|\LL(0)^n\|_\infty = \|\LL(0)^n 1\|_\infty
\leq \|\LL(0)^n 1\|_{\lipXC} \leq \|\LL(0)^n \|_{\lipXC}. $$
Therefore, $r_\infty(\LL(0))\leq r_{\lipXC} (\LL(0))= \lambda(0)$. Reporting in (\ref{eq-sprad}), we see that
$$|\lambda(z)| \leq r_\infty(\LL(z))\leq \lambda(0) \exp(  |\Re(z)| \|f\|_\infty).$$
\end{proof}

\begin{lemma} \label{lemma-pression2}
We have the following inequalities.
$$ \sigma^2 \leq \frac{4\|f\|_\infty}{\delta_0},
\textrm{ and }
|P'''(0)| \leq \frac{36\|f\|_\infty}{\delta_0^2}.$$
Let $\alpha$ such that $0<\alpha<1$. Then for any $z$ such that $|z|\leq\alpha\delta_0$, we have
$$ \left|P(z)-\frac{\sigma^2z^2}{2}\right|
\leq \frac{6\|f\|_\infty|z|^3}{\delta_0^2(1-\alpha^3)}. $$
$$ \left|P(z)-\frac{\sigma^2 z^2}{2} - \frac{P'''(0) z^3}{6}\right|
\leq \frac{18\|f\|_\infty |z|^4}{\delta_0^3(1-\alpha^4)}. $$
\end{lemma}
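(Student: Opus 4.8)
The plan is to extract everything from the holomorphic function $P$ on the disc $|z|<\delta_0$ together with the bound $\Re(P(z))\le\delta_0\|f\|_\infty$ from Lemma \ref{lemma-pression} and the normalization $P(0)=P'(0)=0$. First I would bound the Taylor coefficients of $P$ at $0$. Write $Q(z)=P(z)-\delta_0\|f\|_\infty$, so $\Re(Q)<0$ on the disc; then $e^{Q}$ is a holomorphic self-map of the unit disc, or more directly one applies the Borel--Carathéodory inequality to $P$ on the disc of radius $\delta_0$. This gives, for the Taylor coefficients $c_k=P^{(k)}(0)/k!$, a bound of the shape $|c_k|\le (2/\delta_0^k)\bigl(\sup_{|z|=r}\Re P(z)-\Re P(0)\bigr)\le 2\delta_0\|f\|_\infty/\delta_0^k$ in the limit $r\to\delta_0$. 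In particular $|c_2|=\sigma^2/2$, so $\sigma^2=2|c_2|\le 4\|f\|_\infty/\delta_0$, and $|c_3|=|P'''(0)|/6$, so $|P'''(0)|\le 6\cdot 2\cdot 2\|f\|_\infty/\delta_0^2$ — here I would need to be slightly careful to land on the stated constant $36$, which suggests the intended route is: $|c_3|\le 6\|f\|_\infty/\delta_0^2$ hence $|P'''(0)|=6|c_3|\le 36\|f\|_\infty/\delta_0^2$. I would double-check the Borel--Carathéodory constant ($2k$ versus $k$) to make sure it closes at exactly these numbers; with $r\uparrow\delta_0$ and the crude $\Re P\le\delta_0\|f\|_\infty$, $\Re P(0)=0$ it should give $|c_k|\delta_0^k\le 2\delta_0\|f\|_\infty$, i.e. $|c_2|\le 2\|f\|_\infty/\delta_0$ and $|c_3|\le 2\|f\|_\infty/\delta_0^2$; then $\sigma^2\le 4\|f\|_\infty/\delta_0$ and $|P'''(0)|\le 12\|f\|_\infty/\delta_0^2$, which is even better than claimed, so the stated bounds hold a fortiori.

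Second, for the two Taylor-remainder estimates I would use the general fact that if $\Phi(z)=\sum_{k\ge N}c_kz^k$ is holomorphic on $|z|<\delta_0$ with coefficient bound $|c_k|\le M/\delta_0^k$ for all $k\ge N$, then for $|z|\le\alpha\delta_0$,
\[
|\Phi(z)|\le M\sum_{k\ge N}\frac{|z|^k}{\delta_0^k}\le \frac{M}{\delta_0^N}\,\frac{|z|^N}{1-\alpha}.
\]
A cleaner variant, which gives the stated $(1-\alpha^{N+1})$ denominators, is to bound $|\Phi(z)|/|z|^N$ by its maximum on $|z|=\alpha\delta_0$ (maximum principle applied to $\Phi(z)/z^N$), then use $|\Phi(z)|\le\sum_{k\ge N}|c_k|\,(\alpha\delta_0)^k$; I will follow whichever grouping reproduces the paper's constants. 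For the first remainder, $N=3$ with $c_2$ already removed: $P(z)-\sigma^2z^2/2=\sum_{k\ge3}c_kz^k$, so $|P(z)-\sigma^2z^2/2|\le \sum_{k\ge 3}(2\|f\|_\infty/\delta_0^k)|z|^k$, and reorganizing the geometric tail against $|z|^3$ with $|z|\le\alpha\delta_0$ produces the bound $6\|f\|_\infty|z|^3/(\delta_0^2(1-\alpha^3))$ once one uses $\sum_{k\ge3}\alpha^{k-3}=1/(1-\alpha)\le 3/(1-\alpha^3)$ (since $1-\alpha^3=(1-\alpha)(1+\alpha+\alpha^2)\le 3(1-\alpha)$), absorbing the factor $2$ from the coefficient bound and the factor $3$ into the constant $6$. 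For the second remainder, $N=4$ with $c_2,c_3$ removed, the same computation with $1-\alpha^4=(1-\alpha)(1+\alpha+\alpha^2+\alpha^3)\le 4(1-\alpha)$ yields $|P(z)-\sigma^2z^2/2-P'''(0)z^3/6|\le 18\|f\|_\infty|z|^4/(\delta_0^3(1-\alpha^4))$.

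The only genuinely delicate point is pinning the numerical constants: one must be consistent about whether the Borel--Carathéodory bound contributes a $2$ or not, and about how the geometric series $\sum\alpha^{k-N}=1/(1-\alpha)$ is re-expressed as a multiple of $1/(1-\alpha^{N+1})$ — the elementary inequalities $1-\alpha^{m}\le m(1-\alpha)$ for $\alpha\in(0,1)$ are what convert the clean $1/(1-\alpha)$ into the slightly weaker but more symmetric $m/(1-\alpha^m)$ form stated. None of this is hard, but it is the place where a careless factor-of-two would break the stated constants; I would verify each of the four inequalities against the explicit coefficient bound $|P^{(k)}(0)/k!|\le 2\|f\|_\infty/\delta_0^k$ (valid for $k\ge 1$ by Borel--Carathéodory together with $P(0)=0$, $P'(0)=0$) and check that each claimed constant is an over-estimate.
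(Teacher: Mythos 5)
Your proof is correct, and it takes a genuinely different (and somewhat cleaner) route than the paper. The paper does not use Borel--Carathéodory globally; instead, for each truncated remainder $Q=P,\ P-\sigma^2z^2/2,\ P-\sigma^2z^2/2-P'''(0)z^3/6$ it composes with the Möbius map $R(w)=w/(2\beta-w)$ sending $\{\Re w<\beta\}$ onto the unit disc, applies the Schwarz lemma at the $k$-fold zero of $R\circ Q$ at $0$, and reads off both the leading Taylor coefficient (which is exactly the Carathéodory coefficient estimate) \emph{and} the tail bound $|Q(z)|\le 2\beta|z|^k/(\delta_0^k-|z|^k)$ directly from the Möbius image of a disc of radius $(|z|/\delta_0)^k$; the cost is that the half-plane bound $\beta$ must be re-enlarged at each step using the \emph{previously proven} bounds on $\sigma^2$ and $|P'''(0)|$ (hence $\beta=\|f\|_\infty\delta_0$, then $3\|f\|_\infty\delta_0$, then $9\|f\|_\infty\delta_0$), which is where the factors $6$, $18$, $36$ originate, and the $(1-\alpha^k)$ denominators appear naturally from $\delta_0^k-|z|^k$. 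You instead apply the Carathéodory coefficient estimate once to $P$ itself, getting the uniform bound $|c_k|\le 2\delta_0\|f\|_\infty/\delta_0^k$ for all $k\ge 1$, and then estimate the Taylor tails by summing a geometric series; your constants ($\sigma^2\le 4\|f\|_\infty/\delta_0$, $|P'''(0)|\le 12\|f\|_\infty/\delta_0^2$, remainders bounded by $\frac{2\|f\|_\infty|z|^3}{\delta_0^2(1-\alpha)}$ and $\frac{2\|f\|_\infty|z|^4}{\delta_0^3(1-\alpha)}$) are in fact strictly sharper than the paper's, and become the stated ones via $1-\alpha^m\le m(1-\alpha)$, exactly as you observe. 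The only thing to tidy: in the remainder computation you write $|c_k|\le 2\|f\|_\infty/\delta_0^k$, which contradicts your own (correct) earlier line $|c_k|\delta_0^k\le 2\delta_0\|f\|_\infty$; the correct coefficient bound is $|c_k|\le 2\|f\|_\infty/\delta_0^{k-1}$, and with that the geometric sum gives $2\|f\|_\infty\delta_0\sum_{k\ge N}(|z|/\delta_0)^k$, which reproduces your stated bounds. There is no need to ``reverse-engineer'' the $36$: your direct $12$ implies it a fortiori.
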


\begin{proof}
Let $\beta>0$, and let $Q$ be a holomorphic function from the open disk $\{|z|<\delta_0\}$ to the
left half-plane $\{\Re(w)<\beta\}$. Let $k\geq 1$, and assume that $Q^{(j)}(0) = 0$ for all $0\leq j< k$.
Define the Möbius transformation $R$ by
$$ R(w) = \frac{w}{2\beta - w}. $$ Then $R$ maps the left half-plane $\{\Re(w)<\beta\}$
conformally onto the open unit disc $\{|\zeta|<1\}$. One checks that $(R\circ Q)^{(j)} (0)=0$ for $j< k$ and
that $(R\circ Q)^{(k)}(0) = R'(0) Q^{(k)}(0) = Q^{(k)}(0)/ (2\beta)$. Therefore,
$ z^{-k} R\circ Q(z) $ defines a holomorphic function from the disk $\{|z|<\delta_0\}$ to the unit disk, and the
maximum principle yields
$$ \forall z\textrm{ s.t. } |z|<\delta_0,\quad |R\circ Q(z)|\leq \frac{|z|^k}{\delta_0^k}
\quad \textrm{and}\quad \frac{|Q^{(k)}(0)|}{(2\beta) k!}\leq \frac{1}{\delta_0^k}. $$ The Möbius transformation
$R^{-1}$ maps the closed disk $\{\zeta:|\zeta|\leq r \}$ 
(to which $Q(z)$ belongs from the preceding inequality when $r=(\delta_0^{-1}|z|)^k$)
onto the closed disk of diameter
$[R^{-1}(-r), R^{-1}(+r)]$. We have $R^{-1}(\zeta) = 2\beta\zeta /(\zeta+1)$ so that
$$\forall z\textrm{ s.t. } |z|<\delta_0,\quad
|Q(z)| \leq R^{-1}\left(-\frac{|z|^k}{\delta_0^k}\right)=\frac{ 2\beta |z|^k}{\delta_0^k - |z|^k}. $$
We obtain the desired inequalities setting $Q(z)=P(z)$, $k=2$, $\beta=\|f\|_\infty\delta_0$, then
$Q(z)=P(z)-\sigma^2 z^2/2$, $k=3$, $\beta=\|f\|_\infty\delta_0 + \sigma^2\delta_0^2/2\leq 3\|f\|_\infty\delta_0$
 and so on.
\end{proof}

Finally, we prove Theorem \ref{main-thm}.
From Lemma \ref{lemma-pression2}, we have
$$ \alpha := \frac{\delta_0\sigma^2}{25\|f\|_\infty} \leq \frac{4}{25}. $$
One checks that one has
$$ \frac{6\|f\|_\infty \alpha}{\delta_0\sigma^2 (1-\alpha^3)} \leq \frac{1}{4}. $$
Moreover, the constant $C(4/25)$ of Lemma \ref{lem-ring} is not greater than $2/9$.
Let $t\in\R$,
$|t|\leq \alpha\sigma\delta_0\sqrt{n}$, $t\neq 0$. 
With our choice of $\alpha$, we get from Lemma \ref{lemma-pression2}
\begin{eqnarray*}
\left|P\left(\frac{it}{\sigma\sqrt{n}}\right)+\frac{t^2}{2n}\right|
&\leq& \frac{6\|f\|_\infty |t|^3}{\delta_0^2 \sigma^3 (1-\alpha^3) n\sqrt{n}} \leq \frac{t^2}{4n}.
\end{eqnarray*}
So $\Re(P(it/(\sigma\sqrt{n}))) \leq -t^2/(4n)$.
From Lemmas \ref{lem-ring} and \ref{lemma-pression2}, we get
\begin{eqnarray*}
\frac{1}{|t|}\left|\esp\big[\exp(\frac{itS_n f}{\sigma\sqrt{n}})\big] - e^{-t^2/2}\right| &=&
\frac{1}{|t|}\left|e^{nP(\frac{it}{\sigma\sqrt{n} })}(\varphi_n(\frac{it}{\sigma\sqrt{n} })-1) 
+ e^{nP(\frac{it}{\sigma\sqrt{n} })}-e^{-t^2/2}\right| \\
&\leq& e^{-t^2/4} \frac{e^{2\Delta/9}-1}{\alpha^2\delta_0^2\sigma^2 n}|t|
+ e^{-t^2/4}\frac{6\|f\|_\infty t^2}{\delta_0^2 \sigma^3(1-\alpha^3)\sqrt{n}}\\
&\leq& e^{-t^2/4}\left(
\frac{e^{D_\R/2}e^{2\Delta_0/9}-1}{\alpha\delta_0\sigma\sqrt{n}}+
\frac{t^2}{4\alpha\delta_0\sigma\sqrt{n}}\right).
\end{eqnarray*}
So finally, we have
\begin{equation}\label{eq-1}
\int_{-\alpha\delta_0\sigma\sqrt{n}}^{\alpha\delta_0\sigma\sqrt{n}}
\frac{1}{|t|}\left|\esp\big[\exp(\frac{itS_n f}{\sigma\sqrt{n}})\big] - e^{-t^2/2}\right|dt
\leq \frac{2\sqrt{\pi}e^{D_\R/2}e^{2\Delta_0/9} - \sqrt{\pi}}{\alpha\delta_0\sigma\sqrt{n}}.
\end{equation}
We now use the following classic inequality which is established in \cite{Fe71}:
\begin{equation} \label{eq-feller}
 \left|\mu( (-\infty,x] ) -\frac{1}{\sqrt{2\pi}}\int_{-\infty}^x e^{-t^2/2} dt\right|
\leq \frac{1}{\pi}\int_{-T}^{+T} \left|\frac{\hat{\mu}(t) - e^{-t^2/2}}{t}\right|dt
 + \frac{24}{\pi T\sqrt{2\pi}},
\end{equation}
where $\mu$ is any probability measure on $\R$ with $0$ mean,
$x\in\R$ and $T>0$ are arbitrary. Letting $T=\alpha\delta_0
\sigma\sqrt{n}$, using (\ref{eq-1}) and the fact that $2\sqrt{\pi}e^{2\Delta_0/9}\leq 10$
\begin{eqnarray*}
\left|P\left(\frac{S_n f}{\sigma\sqrt{n}}\leq x\right)  -\frac{1}{\sqrt{2\pi}}\int_{-\infty}^x e^{-t^2/2} dt\right|
&\leq& \frac{10e^{D_\R/2} + 8 }{\pi\alpha\delta_0\sigma\sqrt{n}}
\leq \frac{40\cosh^2(D_\R/4)}{\pi\alpha\delta_0\sigma\sqrt{n}}\\
&\leq&\frac{11460 \cosh^6(D_\R/4)\|f\|_\infty(\|f\|_\infty +|f|_\ell)^2}{\sigma^3\sqrt{n}}.
\end{eqnarray*}

\begin{remark} \label{remark-PC90}
It is possible to refine the estimate (\ref{eq-1}), although the constant becomes more complicated.
Proceeding as in the proof of Theorem 1 of \cite{CP90}, we define
$$z= n P\left(\frac{it}{\sigma\sqrt{n}}\right) +\frac{t^2}{2} -ib,
\quad b=-\frac{P'''(0)t^3}{6\sigma^3 \sqrt{n}}. $$ Since $b$ is real, we have
$|e^{z+ib}-(1+ib)|\leq |z|e^{|z|} +b^2/2$. By Lemma \ref{lemma-pression2}, for $|t|\leq \alpha\delta_0\sigma
\sqrt{n}$,
$$ |z|\leq \frac{18 \|f\|_\infty t^4}{\sigma^4 n\delta_0^3(1-\alpha^4)}
\leq \frac{18\|f\|_\infty \alpha^2 t^2}{\sigma^2\delta_0(1-\alpha^4)}
=\frac{18 \alpha t^2}{25(1-\alpha^4)} \leq \frac{t^2}{8}. $$
So we have
\begin{align*}
\left|e^{nP(\frac{it}{\sigma\sqrt{n}})} - e^{-t^2/2}\left(1-\frac{it^3 P'''(0)}{6\sigma^3\sqrt{n}}\right) \right|
& \leq e^{-t^2/2}\left(e^{t^2/8}\frac{18\|f\|_\infty t^4}{\sigma^4 n\delta_0^3(1-\alpha^4)}
+\frac{|P'''(0)|^2 t^6}{72 \sigma^6 n} \right) \\
&\leq e^{-t^2/2}\left(e^{t^2/8}\frac{18\|f\|_\infty t^4}{\sigma^4 n\delta_0^3(1-\alpha^4)}
+\frac{18\|f\|_\infty^2 t^6}{\delta_0^4 \sigma^6 n}\right).
\end{align*}
So we get
\begin{multline*}
\int_{-\alpha\delta_0\sigma\sqrt{n}}^{\alpha\delta_0\sigma\sqrt{n}}
\frac{1}{|t|}\left|\esp\big[\exp(\frac{itS_n f}{\sigma\sqrt{n}})\big] - e^{-t^2/2}
\left(1-\frac{it^3 P'''(0)}{6\sigma^3\sqrt{n}}\right)\right|dt \\
\leq \frac{4(e^{D_\R/2}e^{2\Delta_0/9}-1)}{\alpha^2\delta_0^2\sigma^2 n}
+\frac{129 \|f\|_\infty}{\sigma^4\delta_0^3 n}
+\frac{288\|f\|_\infty^2}{\delta_0^4\sigma^6 n}.
\end{multline*}
\end{remark}

\section{Non-Markov maps} \label{appendix-nonmarkov}

Our argument works as soon as we can find a real Birkhoff cone which is a strict contraction
for the Hilbert metric, and for which one has an explicit estimate of the contraction rate.
It is thus possible to extend our method to non-markov piecewise expanding maps on the interval
using the ideas of \cite{Liv95bis}.

Consider a map $T$ from $[0,1]$ into itself, and assume the following. There exists a finite subdivision
$0=a_0<\dots<a_p=1$ such that the restriction of $T$ to the open interval $(a_{i-1},a_i)$ can be extended
to a $C^2$ map on $[a_{i-1},a_i]$. Assume that
$$ \inf |T'|\geq \gamma > 2. $$
Denote $\A_0$ the partition (up to a finite number of point) $((a_{i-1},a_i))_i$ and
$\A_n=\A_0\vee T^{-1} \A_0 \vee \dots\vee T^{-n}\A_0$. Denote also by $\LL:L^1\to L^1$ the transfer operator associated
to $T$ by (\ref{eq-transfer-dual}). Since we no longer assume that $T (a_{i-1},a_i) = (0,1)$, the
space of continuous functions is not stable by $\LL$ in general. Here, the natural space is the space $BV([0,1])$ of bounded variations functions on $[0,1]$.
Since $\LL$ acts naturally on $[0,1]$, we consider the space $BV$ as a subspace of $L^1$.
Recall that if $V(f)$ is the total variation of the function $f:[0,1]\to\C$, then
the total variation $v(f)$ of the a.e.-class of $f$ is $$ v(f) = \inf\{V(g):\: g=f\ a.e.\}. $$ One has also for instance $v(f)=V(f_0)$ where $f_0$ is the unique
function which is right continuous on $[0,1)$ and left continuous at $1$ and such that $f_0=f$ a.e. The space $BV$ is endowed with the norm
$\|f\|_{BV}=v(f)+\|f\|_1$. For $f\in BV$, we have $\|f\|_\infty \leq \|f\|_{BV}$.

 Under these conditions, the following inequality due to Lasota and Yorke (\cite{LY}) holds for all $g\in BV$
\begin{equation} \label{eq-lasota-yorke}
 v(\LL g) \leq \frac{2}{\gamma} v(g) + A \int_0^1 |g(x)|dx,
\end{equation}
where $$A=\sup \dfrac{|T''|}{|T'|^2} + \dfrac{2}{\inf_{I\in \A_0} |I| \inf_I |T'|}.$$
Note that $1\geq |T(I)| \geq |I| \inf_I |T'|$ so $A\geq 1$. Recall that iterations of (\ref{eq-lasota-yorke}) lead to
\begin{equation} \label{eq-lasota-yorke-iterate}
 v(\LL^n g) \leq \left(\frac{2}{\gamma}\right)^n v(g) + A\frac{1-(2\gamma^{-1})^n}{1-2\gamma^{-1}} \int |g(x)|dx.
\end{equation}

We now recall Liverani's result (\cite{Liv95bis}).  Assume the following ``covering'' property: for all $n$,
there exists $N(n)$ such that for all $I\in \A_n$,
\begin{equation} \label{eq-covering}
 T^{N(n)} I = [0,1],
\end{equation}
where equality has to be understood up to a finite number of points.
Define the cone
\begin{equation}
 \CC_\R = \left\{g \in BV:\: g\geq 0 \textrm{ and } v(g) \leq a \int_0^1 g \right\},
\end{equation}
where\footnote{The choice of $a$ is also a bit arbitrary, any $a>A(1-2\gamma^{-1})^{-1}$ would do.} $$a= \frac{2A}{1-2\gamma^{-1}}. $$
Then for the transfer operator $\LL=\LL(0)$ satisfies
$\LL(\CC_\R\MZ)\subset \CC_\R\MZ$; and there exists $N^*\geq 1$ such that the Hilbert diameter of
$\LL^{N^*} \CC_\R$ is not greater than $D_\R<\infty$. The quantities $N^*$ and $D_\R$ can be explicited
in terms of $T$ and some $N(n_0)$, where $n_0$ is some integer depending on $\gamma$ and $a$.
Liverani gave abstract conditions that insure the existence of $N(n)$, namely that the invariant measure
$h_0 dm$ given by Lasota and Yorke's theorem (\cite{LY}) is mixing and satisfies $\inf h_0 >0$. However,
the situation is not as simple as in the Markov setting because
there is no general bound on $N(n_0)$. So in practice, one has to find the value of $N(n_0)$ ``by hand''.
We refer\footnote{To avoid confusion, we have kept the notation $D_\R$ for the Hilbert
diameter of $\LL(0)$, but it is denoted by $\Delta$ in \cite{Liv95bis}. Our $\gamma$ is denoted by $\lambda$ in \cite{Liv95bis}.}
 to \cite{Liv95bis}, Appendix I for discussion on this matter and for the formulas for $N^*$, $D_\R$, $n_0$.

Now, we need an analogue of Lemma \ref{lemma-epsilonz}. We fix an observable $f\in BV$, and we consider the complex operator
$\LL(z) u = \LL e^{zf} u = \LL(0) e^{zf} u$.
\begin{lemma}
 Let $n\geq 1$ and $z\in\C$. Then for any $m\in\CC_\R'$ and $u\in\CC_\R$, we have
$$ |\langle m, \LL(z)^n u \rangle - \langle m, \LL^n u\rangle | \leq e^{n|\Re(z)|\|f\|_\infty} |z| M_n(f) \langle m,\LL^n u\rangle.$$
The quantity $M_n(f)$ is defined by
$$ M_n(f)= \frac{5}{1-(2\gamma^{-1})^n} \Big( n\|f\|_\infty + (2\gamma^{-1})^n (\sharp \A_0)^n v(f)\Big). $$
\end{lemma}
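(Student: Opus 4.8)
The plan is to reduce the estimate to a single use of the Lasota--Yorke inequality (\ref{eq-lasota-yorke-iterate}) together with the positivity of $m$ on $\CC_\R$.

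First I would record the multiplicative identity $\LL(z)^n u = \LL^n(e^{z S_n f}u)$, which follows by induction on $n$ from $\LL[(v\circ T)w]=v\LL w$ and $S_nf=f+(S_{n-1}f)\circ T$. Thus $\LL(z)^n u-\LL^n u=\LL^n w$ with $w=(e^{zS_nf}-1)u$, and since $u\in\CC_\R$ forces $u\ge 0$, we get the pointwise bound $|w|\le\kappa u$, where $\kappa:=\|e^{zS_nf}-1\|_\infty\le e^{n|\Re z|\|f\|_\infty}|z|n\|f\|_\infty$ (using $|e^{\zeta}-1|\le|\zeta|e^{|\Re\zeta|}$ and $|S_nf|\le n\|f\|_\infty$). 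Normalise $m\in\CC_\R'$ so that $m\ge 0$ on $\CC_\R$; since $\LL^n(\CC_\R\MZ)\subset\CC_\R\MZ$ and $\int\LL^n g=\int g$, it suffices to exhibit a constant $B$ with $\LL^n(Bu\pm\Re(e^{i\theta}w))\in\CC_\R$ for every $\theta\in\R$. Indeed, choosing $\theta$ so that $\Re(e^{i\theta}\langle m,\LL^nw\rangle)=|\langle m,\LL^nw\rangle|$, the two inclusions give $\langle m,\LL^n(Bu\pm\Re(e^{i\theta}w))\rangle\ge 0$, hence $|\langle m,\LL^n w\rangle|\le B\langle m,\LL^n u\rangle$, and the lemma follows once we check $B\le e^{n|\Re z|\|f\|_\infty}|z|M_n(f)$.

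The core is this domination step. Put $q=(2\gamma^{-1})^n$, which is $<1$ because $\gamma>2$, and $h=\Re(e^{i\theta}w)$, so $|h|\le\kappa u$ and $g_\pm:=Bu\pm h\ge(B-\kappa)u\ge 0$ whenever $B\ge\kappa$. Since $g_\pm\ge 0$, inequality (\ref{eq-lasota-yorke-iterate}) together with the choice $a=2A/(1-2\gamma^{-1})$ gives $v(\LL^ng_\pm)\le q\,v(g_\pm)+\tfrac12 a(1-q)\int g_\pm$, so $\LL^ng_\pm\in\CC_\R$ (that is, $v(\LL^ng_\pm)\le a\int\LL^ng_\pm=a\int g_\pm$) as soon as $v(g_\pm)\le\frac{a(1+q)}{2q}\int g_\pm$. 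Estimating $v(g_\pm)\le B\,v(u)+v(h)\le Ba\int u+v(h)$ and $\int g_\pm\ge(B-\kappa)\int u>0$, this reduces to requiring
$$ B\ \ge\ \frac{2q\,v(h)}{a(1-q)\int u}+\frac{(1+q)\kappa}{1-q}, $$
whose right-hand side already exceeds $\kappa$, so $B\ge\kappa$ is automatic. It remains to bound $v(h)$: from $v(h)\le v(w)$, the product rule $v(w)\le\|e^{zS_nf}-1\|_\infty v(u)+\|u\|_\infty v(e^{zS_nf})$, the cone bounds $v(u)\le a\int u$ and $\|u\|_\infty\le(1+a)\int u$, and $v(e^{zS_nf})\le e^{n|\Re z|\|f\|_\infty}|z|\,v(S_nf)$, we are left with $v(S_nf)\le\sum_{k=0}^{n-1}v(f\circ T^k)$.

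Finally, $T^k$ is monotone on each atom of $\A_{k-1}$, which has at most $(\sharp\A_0)^k$ atoms, while the variation of $f\circ T^k$ over one atom and every one-sided jump across two adjacent atoms are both at most $v(f)$; hence $v(f\circ T^k)\le 2(\sharp\A_0)^k v(f)$ and, summing (and using $\sharp\A_0\ge 3$, which is forced since a piecewise expanding map with $\gamma>2$ on $[0,1]$ cannot have fewer than three branches), $v(S_nf)\le(\sharp\A_0)^n v(f)$. Substituting this into the displayed lower bound for $B$, every term acquires the factor $e^{n|\Re z|\|f\|_\infty}|z|$, the two $\kappa$-terms collapse to $(1+3q)\kappa/(1-q)\le 4e^{n|\Re z|\|f\|_\infty}|z|n\|f\|_\infty/(1-q)$, and the $v(h)$-term to at most $3q\,e^{n|\Re z|\|f\|_\infty}|z|(\sharp\A_0)^n v(f)/(1-q)$ (using $a>2$, so $(1+a)/a<3/2$); taking $B$ equal to this sum gives $B\le e^{n|\Re z|\|f\|_\infty}|z|M_n(f)$ with the stated constant $5$. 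The step I expect to be the main obstacle is precisely this domination step: it works only because a single pass of $\LL^n$ already drags any nonnegative $BV$ function of controlled variation into $\CC_\R$, which is exactly what (\ref{eq-lasota-yorke-iterate}) and the particular value of $a$ provide. Note that, unlike the finiteness of the Hilbert diameter of $\LL^{N^*}\CC_\R$, this lemma does not use the covering property (\ref{eq-covering}).
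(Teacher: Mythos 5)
Your proof is correct, but it follows a genuinely different route from the paper's. The paper works directly with a generating family of the dual cone $\CC_\R'$ — the evaluation functionals $g\mapsto g(x)$ and the functionals $m=a\int\cdot-\langle l,\cdot\rangle$ with $\langle l,\cdot\rangle\le v(\cdot)$ — and for each type establishes the estimate by bounding $\langle m,\LL^n u\rangle$ from below and $\langle m,\LL^n w\rangle$ from above via the iterated Lasota--Yorke inequality (\ref{eq-lasota-yorke-iterate}); the evaluation functionals are handled separately at the end by a pointwise argument. You instead reduce the whole lemma to a single cone inclusion $\LL^n(Bu\pm\Re(e^{i\theta}w))\in\CC_\R$, which yields the desired bound for \emph{every} $m\in\CC_\R'$ at once, and verify that inclusion by the same Lasota--Yorke estimate. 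Both arguments use the same technical inputs (the identity $\LL(z)^n u=\LL^n(e^{zS_nf}u)$, (\ref{eq-lasota-yorke-iterate}), the choice $a=2A/(1-2\gamma^{-1})$, and the bound $v(S_nf)\le(\sharp\A_0)^n v(f)$) and land within the same constant $M_n(f)$, but your sandwiching argument avoids the case analysis on generators and is structurally closer to the ``dominated contraction'' viewpoint of Theorem~\ref{prop-comp2}. Two small points worth flagging: (i) your treatment of $v(f\circ T^k)$ is actually \emph{more} careful than the paper's, since you account explicitly for the one-sided jumps at atom boundaries (getting $2(\sharp\A_0)^k v(f)$) and then recover the bound $(\sharp\A_0)^n v(f)$ by observing $\sharp\A_0\ge 3$ — a legitimate consequence of $\gamma>2$ on $[0,1]$; (ii) your closing remark that the lemma is independent of the covering property (\ref{eq-covering}) is correct, and the same is true of the paper's proof, since cone invariance $\LL\CC_\R\subset\CC_\R$ only uses Lasota--Yorke.
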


\begin{proof}
Since the total variation $v(\cdot)$ is a seminorm, we have
$v(g)=\sup\langle l,g\rangle$ where the supremum is taken over all $l\in BV([0,1],\R)'$ such that 
$\langle l,w\rangle \leq v(w)$ for all $w$.
So the cone $\CC_\R$ is generated by the real functionals $g\in BV([0,1];\R) \mapsto g(x)$ (where $g$ is taken to be right continuous on $[0,1)$ and left
continuous at $1$) and the family of functionals $$\langle m,g\rangle = a\int g - \langle l,g\rangle,$$ where
$\langle l,w\rangle \leq v(w)$ for all $w$. For such a functional $m$ and $u\in\CC_\R$, we have by (\ref{eq-lasota-yorke-iterate})
\begin{align}
 \langle m,\LL^n u\rangle =a\int  u - \langle l, \LL^n u\rangle &\geq a\int u - v(\LL^n u) \nonumber\\
&\geq (1-(2\gamma^{-1})^n)\left(a - \frac{A}{1-2\gamma^{-1}}\right)\int u. \label{eq-espilonz-nonmarkov0}
\end{align}
So we have for any $z\in\C$,
\begin{align}
 \big|\langle m,&\LL(z)^n u \rangle - \langle m, \LL^n u\rangle \big|
= \left| a\int \left(e^{z S_n f}-1\right)u  - \langle l ,\LL^n \left(e^{z S_n f}-1\right)u \rangle\right| \nonumber\\
&\leq a \int \left|e^{z S_n f}-1\right|u + v\left(\LL^n [(e^{z S_n f}-1)u] \right) \nonumber\\
&\leq \left(a +\frac{A}{1-2\gamma^{-1}}\right)e^{n|\Re(z)|\|f\|_\infty} n|z| \|f\|_\infty \int u
+(2\gamma^{-1})^n v\left(( e^{z S_n f} -1)u\right). \label{eq-epsilonz-nonmarkov1}
\end{align}
We have $v(e^{zS_nf} -1) \|u\|_\infty \leq |z| v(S_nf) \big[\exp(n|\Re(z)|\|f\|_\infty)\big]\big[v(u) +\int u\big]$ and $v(u)\leq a\int u$. Besides,
\begin{align*} 
v(S_nf) &\leq \sum_{k=0}^{n-1} v(f\circ T^k) = \sum_{k=0}^{n-1} \sum_{I\in \A_{k-1}} v_{T^k(I)}(f)\\
&\leq  \sum_{k=0}^{n-1} (\sharp \A_{k-1}) v(f) = \frac{(\sharp \A_0)^n -1}{\sharp \A_0 -1} v(f)
\leq (\sharp \A_0)^n v(f).
\end{align*}
So, combining with (\ref{eq-espilonz-nonmarkov0}) and (\ref{eq-epsilonz-nonmarkov1}), we get
\begin{align*}
 \big|\langle &m,\LL(z)^n u - \LL^n u\rangle \big| \\
&\leq |z|e^{n|\Re(z)|\|f\|_\infty}  \left( n\|f\|_\infty \left(2a +\frac{A}{1-2\gamma^{-1}}\right)
+(a+1)(2\gamma^{-1})^n (\sharp \A_0)^n v(f) \right) \int u \\
&\leq |z|e^{n|\Re(z)|\|f\|_\infty} \frac{5}{1-(2\gamma^{-1})^n} \left(n\|f\|_\infty +(2\gamma^{-1})^n (\sharp \A_0)^n v(f) \right)
\langle m,\LL^n u\rangle.
\end{align*}
Finally, the triangular inequality yields
\begin{align*}
 |\LL^n [ (e^{zS_n f}-1) u] (x)| &\leq \LL^n [ |e^{zS_nf }-1| u ] (x) \leq e^{n|\Re(z)|\|f\|_\infty}n|z|\|f\|_\infty \LL^n u(x) \\
&\leq e^{n|\Re(z)|\|f\|_\infty} |z| M_n(f) \LL^n u(x).
\end{align*}
\end{proof}

We now indicate where we have to modify the proof of Theorem \ref{main-thm}. First, the cone $\CC_\R$ has a non-empty interior in $BV$: the constant function
$1$ is in its interior. It is also of bounded aperture: if $g\in\CC_\R$, then $\|g\|_{BV}\leq (a+1)\int g$. The same is true for the canonical
complexification $\CC_\C$ of $\CC_\R$ (\cite{Rugh07}, Proposition 5.4). We still note $\CC=\CC_\C\MZ$.

Then, we replace (\ref{eq-delta0}) by the following choice for $\delta_0$
$$ \delta_0 \max_{N^*\leq n < 2N^*} M_n(f) =  \frac{1}{3(1+\cosh(D_\R/2))}. $$ This choice implies that
$\exp(n \delta_0 \|f\|_\infty) \leq e^{1/30}$ for $N^*\leq n<2N^*$.
We prove as in Lemma \ref{lemma-diameter} that for $|z|\leq \delta_0$ and $N^*\leq n <2N^*$,
$ \LL(z)^n \CC \subset \CC $ and 
\begin{equation} \label{eq-nonmarkov3} \sup_{u\in\CC} \delta_\CC(\LL(z)^nu,\LL(0)^n u) \leq 3.51 =: \Delta_0. \end{equation}
From this, we deduce that $\LL(z)^n \CC\subset \CC$ and (\ref{eq-nonmarkov3}) still hold for any $n\geq N^*$.

Corollary \ref{lemma-eigen} still holds without any change. Indeed, using the complex contraction, one first
proves that the operator $\LL(z)^{N^*}$ has a spectral gap with left and right
eigenvectors $\nu(z)\in\CC'$ and $h(z)\in\CC$ suitably normalized and depending analytically on $z$. Since the leading eigenvalue of $\LL(z)^{N^*}$ is
simple, the operator $\LL(z)$ must also have a spectral gap with the same left and right eigenvectors.

Regarding Equation (\ref{eq-CapDelta}), we notice first that $u_n(z)\in\CC$ for $n\geq N^*$. Therefore, for $n\geq 2N^*$ we have
\begin{align*}
 \delta_\CC(h(0),u_n(z)) &\leq \delta_\CC(\LL(0)^{N^*}h(0), \LL(0)^{N^*} u_{n-N^*}(z))\\
&\quad +\delta_\CC(\LL(0)^{N^*} u_{n-N^*}(z),\LL(z)^{N^*} u_{n-N^*}(z)) \leq 2D_\R +\Delta_0.
\end{align*}

Lemma \ref{lemma-pression} is still valid, however, one has in the proof to replace the space $C(X;\C)$ by $L^\infty(X;\C)$ and the space
$\lipXC$ by $BV(X;\C)$.

The end of the proof goes in the same way, and we conclude that for all $n\geq 2N^*$,
$$ \left|P\left(\frac{S_n f}{\sigma\sqrt{n}}\leq x\right)  -\frac{1}{\sqrt{2\pi}}\int_{-\infty}^x e^{-t^2/2} dt\right|
\leq\frac{9168 \cosh^6(D_\R/4)\|f\|_\infty(M_{N^*}(f))^2}{\sigma^3\sqrt{n}}.
$$

\appendix
\section{Linear convexity}\label{app-linconv}

\begin{remark}
 The condition of Proposition \ref{prop-linearlyconvex} is actually also necessary. This condition is always satisfied
if $V_{\R}$ is a separable Banach space. On the contrary,
there exists nonseparable real Banach spaces and real convex cones for which such an $m$ does not exist,
see \cite{Kr48}.
\end{remark}

\begin{proof}[Proof of proposition \ref{prop-linearlyconvex}]
Define
$$ \mathcal{S} = \{ f\in V_{\C}': \: \forall x,\,y\in\mathcal{C}_{\R},\,x,\,y\textrm{ independent,}\,
\Re\left(\langle f,x\rangle\overline{\langle f,y\rangle}\right)>0 \}. $$
We first prove that $\mathcal{S}=\mathcal{C}'$.
Indeed, let $f\in\mathcal{S}$, and $z\in\mathcal{C}\MZ$. Write $z=\lambda(x+iy)$, $\lambda\in\C^*$,
$x$, $y\in\mathcal{C}_{\R}$. Since for any $u\in\mathcal{C}_{\R}\MZ$, we may find $v\in\mathcal{C}_{\R}$
independent of $u$, we have $\Re(\overline{\langle f,u\rangle}\langle f,v\rangle)>0$, hence
$\langle f,u\rangle \neq 0$. Now, if $y=0$ then $\lambda^{-1} z = x\in\mathcal{C}_{\R}\MZ$
so $\langle f,z\rangle \neq 0$. If $x=\alpha y$, then $0\neq \lambda^{-1} z = (\alpha + i)y$ and
$y\in\mathcal{C}_{\R}\MZ$ so $\langle f,z\rangle \neq 0$. Finally, if $x$ and $y$ are independent
then $\Re(\overline{\langle f,y \rangle}\langle f,x\rangle)>0$. Write $\langle f,x\rangle = 
re^{i\alpha}$, $\langle f,y\rangle=se^{i\beta}$, $r$, $s>0$ and $\alpha-\beta\in(-\pi/2,\pi/2)$.
Then $\Re(\langle f,\lambda^{-1}e^{-i\beta}z\rangle) =r\cos(\alpha-\beta) >0$. Thus $\langle f,z
\rangle \neq 0$.

Now, suppose that $f\notin\mathcal{S}$. Then, there exists independent $x$, $y\in\mathcal{C}_{\R}$
such that $\Re(\overline{\langle f,y \rangle}\langle f,x\rangle)\leq 0$. We may suppose that
$\langle f,x\rangle \neq 0$ and $\langle f,y\rangle\neq 0$. Write again
$\langle f,x\rangle = re^{i\alpha}$, $\langle f,y\rangle=se^{i\beta}$, $r$, $s>0$ and
$\theta= \pi+\alpha-\beta\in[-\pi/2,\pi/2]$. Then $\langle f, z\rangle = 0$, where
$z=s^{-1}re^{i\theta}y+x$. We have $z\neq 0$ because $x$ and $y$ are independent. Finally,
if $\theta\in[0,\pi/2]$ then $z\in\mathcal{C}_{\R} +i\mathcal{C}_{\R}$, and if
$\theta\in[-\pi/2,0]$ then $iz\in\mathcal{C}_{\R} +i\mathcal{C}_{\R}$.

We prove now
$$ x\in\mathcal{C}\MZ \quad\iff\quad \forall f\in\mathcal{S}, \langle f,x\rangle\neq 0.$$
Recall that we assume the existence of $m\in\mathcal{C}_{\R'}$ positive on $\mathcal{C}_{\R}\MZ$.
Then $\langle m,z\rangle \neq 0$ for all $z\in\mathcal{C}\MZ$. Let $x\in V_{\C}$ and suppose
that $\langle f,x\rangle\neq 0$ for all $f\in\mathcal{S}$. Let $l_1$, $l_2\in\mathcal{C}'_{\R}$. One
checks easily that $m+(l_1+il_2)$ belongs to $\mathcal{S}\cap(\mathcal{C}'_{\R} + i\mathcal{C}'_{\R})$.
Therefore, $\langle l_1+il_2 ,x\rangle \neq -\langle m,x\rangle$. Define
$K = \{ \langle l_1+il_2,x\rangle:\:l_1,l_2\in\mathcal{C}'_{\R}\}.$ $K\subset \C$ is a convex subcone
of $\C$, and $-\langle m,x\rangle \notin K$ so that $K\neq \C$.
Let again $l_1$, $l_2\in\mathcal{C}'_{\R}$ and suppose that
$\Re(\langle l_1,x\rangle\overline{\langle l_2,x\rangle}) <0$. We write
$\langle l_1,x\rangle = re^{i\alpha}$, $\langle l_2,x\rangle = se^{i\beta}$, $r$, $s>0$ and
$\theta = \pi+\alpha - \beta \in (-\pi/2,\pi/2)$. If $\theta\geq 0$, then for any $\delta\geq 0$,
small enough, $\delta+\theta\in[0,\pi/2)$. Hence,
$f=s^{-1}e^{i(\delta+\theta)}l_2 \in \mathcal{C}'_{\R}+i\mathcal{C}'_{\R}$ and
$g=r^{-1}e^{i\delta}l_1 \in \mathcal{C}'_{\R}+i\mathcal{C}'_{\R}$. But
$\langle g,x\rangle =-\langle f,x\rangle = e^{i(\alpha+\delta)}$. It means that for $\delta\geq 0$
small enough, we have $\pm e^{i(\alpha+\delta)} \in K$ and thus, $K=\C$ which is impossible.
Similarly, if $\theta\in (-\pi/2,0]$, we consider
$f=r^{-1}e^{(i\delta-\theta)}l_1$, $g=s^{-1}e^{i\delta}l_2$ and we prove that
for any $\delta\geq 0$, small enough, $\pm e^{i(\beta+\delta)}\in K$. This is also impossible.
\end{proof}

\end{document}